\theoremstyle{plain}
\newtheorem{theorem}{Theorem}[section]
\newtheorem{lemma}[theorem]{Lemma}
\newtheorem{proposition}[theorem]{Proposition}
\theoremstyle{definition}
\newtheorem{definition}[theorem]{Definition}
\newtheorem{remark}[theorem]{Remark}
\newtheorem{example}[theorem]{Example}
\newtheorem*{assumptions*}{\assumptionnumber}
\providecommand{\assumptionnumber}{}
\newenvironment{assumptions}[1]
 {%
	 \renewcommand{\assumptionnumber}{Assumption #1}%
\begin{assumptions*}%
  \protected@edef\@currentlabel{#1}%
 }
 {%
  \end{assumptions*}
 }
\def\cD{\mathcal{D}}
\def\cF{\mathcal{F}}
\def\cH{\mathcal{H}}
\def\cL{\mathcal{L}}
\def\cM{\mathcal{M}}
\def\cS{\mathcal{S}}
\def\bC{\mathbb{C}}
\def\bE{\mathbb{E}}
\def\bN{\mathbb{N}}
\def\bP{\mathbb{P}}
\def\bR{\mathbb{R}}
\def\R{\mathbb{R}}
\def\e{\varepsilon}
\newcommand{\Norm}[1]{\left|\left|  #1   \right|\right|}
\newcommand{\InPrd}[1]{\left\langle #1 \right\rangle}
\def\E{{\mathbb E}}
\def\bs{{\bf s}}
\def\bx{{\bf x}}
\def\by{{\bf y}}
\def\e{{\rm e}}
\def\cprime{$'$}
\numberwithin{equation}{section}
\begin{document}

\title{
  Exact asymptotics of the stochastic wave equation \\
  with time-independent noise
}

\author{Raluca M. Balan\footnote{Corresponding author. Department of Mathematics and Statistics, University of Ottawa,  Ottawa, ON K1N 6N5, Canada. E-mail address:
rbalan@uottawa.ca}
	\footnote{Research supported by a grant from Natural Sciences and Engineering Research Council of Canada}
	\and
	Le Chen\footnote{Department of Mathematics, Emory University, Atlanta, GA 30322. E-mail address: le.chen@emory.edu}
	\and
	Xia Chen\footnote{Department of Mathematics, University of Tennessee, Knoxville, TN 37996-1300. E-mail address: xchen3@math.utk.edu}
	\footnote{Research is partially supported by the Simons Foundation \#585506}}

\date{\today}
\maketitle

\begin{abstract}
	\noindent In this article, we study the stochastic wave equation in
	all dimensions $d\leq 3$, driven by a Gaussian noise $\dot{W}$ which does not
	depend on time. We assume that either the noise is white, or the
	covariance function of the noise satisfies a scaling property similar to
	the Riesz kernel. The solution is interpreted in the Skorohod sense
	using Malliavin calculus.  We obtain the exact asymptotic behaviour of
	the $p$-th moment of the solution either when the time is large or when
	$p$ is large.  For the critical case, that is the case when $d=3$ and the noise is
	white, we obtain the exact transition time for the second moment to be
	finite.
\end{abstract}

\noindent {\em MSC 2010:} Primary 60H15; Secondary 60H07, 37H15

\vspace{1mm}

\noindent {\em Keywords:}
Stochastic partial differential equations; Stochastic wave equation; Malliavin
calculus; Lyapunov exponents; Exact moment asymptotics; Moment blowup.

\tableofcontents

\section{Introduction}

In this paper, we study the following stochastic wave equation:
\begin{equation}
	\label{E:SWE}
	\left\{\begin{array}{rcl}
		\displaystyle \frac{\partial^2 u}{\partial t^2}(t,x) & = & \Delta u(t,x) + \sqrt{\theta}\,u(t,x)\dot{W}(x), \quad t>0,\: x \in \bR^d, \\[2ex]
		\displaystyle u(0,x) & = & 1, \qquad \displaystyle \frac{\partial u}{\partial t}(0,x)  =  0,
	\end{array}\right.
\end{equation}
where $\theta>0$ and $W=\{W(\varphi);\varphi \in \cD(\bR^d)\}$ is a
(time-independent) Gaussian process, defined on a complete probability space
$(\Omega, {\cal F},P)$, with mean zero and covariance:
\begin{align*}
	\E[W(\varphi) W(\psi)]
	=  \int_{\bR^d} \cF \varphi(\xi) \overline{\cF \psi(\xi)}\mu(d\xi)
	=: \langle \varphi,\psi \rangle_{\cH},
\end{align*}
with the {\it spectral measure} $\mu$ being assumed to be a non-negative and nonnegative definite tempered measure
\footnote{A Schwarz distribution $\mu \in \cS'(\bR^d)$ is \textit{nonnegative definite} if $(\mu,\phi*\phi^*)\ge 0$ for every $\phi\in\cS(\R^d)$,
where $*$ denotes the spatial convolution and $\phi^*(x)=\overline{\phi(-x)}$; see \cite[Section 3.1]{GV61-IV}.} on $\bR^d$.
Here, $\cF \varphi(\xi)=\int_{\bR^d}\e^{-i \xi \cdot x}\varphi(x)dx$ denotes the Fourier transform.
It is known that the Fourier transform of $\mu$, denoted by $\gamma$, is also a nonnegative and nonnegative definite tempered measure.
The solution is interpreted in the Skorohod sense, as explained in Section \ref{S:Pre} below.
We will focus on the cases when $d\le 3$ and denote by $G$ the corresponding fundamental solution:
\begin{align}
	G(t,x)=
	\begin{cases}
		\displaystyle \frac{1}{2}1_{\{|x|<t\}}                               & \text{if $d=1$},\\[1em]
		\displaystyle \frac{1}{2\pi} \frac{1}{\sqrt{t^2-|x|^2}}1_{\{|x|<t\}} & \text{if $d=2$},\\[1em]
		\displaystyle \frac{1}{4\pi t}\sigma_t                               & \text{if $d=3$},
	\end{cases}
\end{align}
where $\sigma_t$ is the surface measure on the sphere
$\{x\in \bR^3; |x|=t\}$ with $|\cdot|$ being the Euclidean norm in $\bR^d$.

\medskip

The goal of this article is to derive sharp solvability conditions and the
precise moment asymptotics of the Skorohod solution to equation \eqref{E:SWE}.  The
literature for the stochastic heat equation with Gaussian noise is immense.
Here we point out \cite{xchen17}, \cite{xchen19}, \cite{CHSX15} and \cite{HLN}
for an incomplete list of references.  But to the best of our knowledge, the
investigations related to the wave equation under the current setting seem to be
new in the literature.  Compared to the stochastic heat equation, the main
difficulty is the absence of the Feynman-Kac representation of the moments.  The
singularity of the wave kernel especially when $d=3$ also causes some technical
difficulties.  The method proposed in this paper turns out to be fairly general,
which can be used to study many other stochastic partial differential equations
provided that the corresponding fundamental solution is nonnegative and has a
certain scaling property.

\bigskip

There are many references dedicated to the study of the stochastic wave equation
with time-dependent spatially-homogeneous Gaussian noise and possibly a
Lipschitz non-linear term $\sigma(u(t,x))$ multiplying the noise.  These
investigations started with Dalang's seminal article \cite{dalang99} for the
case $d\leq 3$, and were extended to the case $d\geq 4$ in Conus and Dalang
\cite{conus-dalang09}. In the case when $\sigma(u)=u$, Mueller and Dalang
\cite{dalang-mueller09} obtained exponential bounds for the $p$-th moments of
the solution in dimension $d=3$. An exact formula for the second moment of the
solution of the stochastic wave equation with space-time white noise was
obtained in Chen and Dalang \cite{CD15}, from which one easily obtains the large
time asymptotics for the second moment.  All these references handle the
time-white noise. For time-colored noise, one may refer to the recent works by
Balan and Song \cite{balan-song19,balan-song17}.  The current work serves as the
first necessary step to understand the exact asymptotic property of the
stochastic wave equation with more general, i.e., time-dependent, noises.
\medskip

Typical examples of the noise $\dot{W}$ include the white noise, the Riesz
kernel noise, fractional noise, a hybrid of these noises, etc.  In order to
cover all these examples, we will work under the following three assumptions ---
Assumptions \ref{A:Main}, \ref{A:White}, \ref{A:Critical} ---  exclusively:

\begin{assumptions}{A}
	\label{A:Main}
	Assume that:
	\begin{enumerate}[(i)]
		\item both $\mu$ and $\gamma$ are absolutely continuous with respect to the Lebesgue measure;
		\item for some $\alpha \in (0,d)$, $\gamma$ satisfies the following scaling property:
			\begin{equation}
				\label{E:scaleGamma}
				\gamma(cx)=c^{-\alpha}\gamma(x) \qquad \text{for all $c>0$, $x\in \bR^d$};
			\end{equation}
		\item there exists a nonnegative function $K$ on $\R^d$ such that
			\begin{align}
				\label{E:gamma=K*K}
				\gamma = K*K,
			\end{align}
			where ``$*$" refers to the spatial convolution.
	\end{enumerate}
\end{assumptions}

\begin{remark}
  Part (i) of Assumption \ref{A:Main} implies that there exists a nonnegative
  definite (possibly signed) function $K$ such that the decomposition
  \eqref{E:gamma=K*K} holds.  Indeed, let $\varphi(\xi)$ be the density of
  $\mu$.  It is nonnegative and hence $\sqrt{\varphi(\xi)}$ is a well defined
  function.  Viewed as a nonnegative tempered measure, its inverse Fourier
  transform, denoted as $K:=\cF^{-1}\left(\sqrt{\varphi}\right)$, is a
  nonnegative definite (tempered) measure. Therefore, we have
  \eqref{E:gamma=K*K} in the sense of distributions.  Moreover, the absolute
  continuity of $\gamma$ entails that $K$ also admits a density.  However,
  the existence of a nonnegative $K$ is not immediate. Hence, we list it as part
  (iii) of the assumption.  Part (iii) is always satisfied for all examples that
  we are interested.
\end{remark}

Assumption~\ref{A:Main} excludes the white noise case, which will be treated
separately in this paper in two cases: the sub-critical case -- Assumption
\ref{A:White} and the critical case -- Assumption \ref{A:Critical}.  Formally,
both Assumptions~\ref{A:White} and \ref{A:Critical} below correspond to
Assumption \ref{A:Main} with $\mu(d\xi)=(2\pi)^{-d}d\xi$, $\alpha=d$, and
$\gamma=K=\delta_0$.
\begin{assumptions}{B}
	\label{A:White}
	Assume that $d\le 2$, $\gamma=\delta_0$ (or equivalently $\mu(d\xi)=(2\pi)^{-d}d\xi$).
\end{assumptions}
\begin{assumptions}{C}
	\label{A:Critical}
	Assume that $d=3$, $\gamma=\delta_0$ (or equivalently $\mu(d\xi)=(2\pi)^{-3}d\xi$).
\end{assumptions}

\begin{remark}
  Under Assumption \ref{A:Main}, the scaling property of $\gamma$ implies that
  $\mu(d\xi)=\varphi(\xi)d\xi$ has the following scaling property:
	\begin{align}
		\label{E:scaleMu}
		\varphi(c\xi) = c^{-\left(d-\alpha\right)}\varphi(\xi) \quad \text{or equivalently} \quad
		\mu(cA)=c^{\alpha}\mu(A)
	\end{align}
  for all $c>0$, $\xi \in \bR^d$ and $A\in \mathcal{B}(\R^d)$.  In the case of
  the white noise, i.e., under either Assumption \ref{A:White} or
  \ref{A:Critical}, the measure $\mu$ has the scaling property:
	\begin{align*}
		\mu(cA)=c^{d}\mu(A) \quad \mbox{for all} \ c>0, A \in {\cal B}(\bR^d).
	\end{align*}
\end{remark}

Assumption~\ref{A:Main} is satisfied by the following examples:
\begin{example}
	\label{E:Riesz}
	$\gamma(x)=|x|^{-\alpha}$  for some $\alpha \in (0,d)$.
	In this case, $\mu(d\xi)=C_{d,\alpha}|\xi|^{-(d-\alpha)}d\xi$ and $K(x)=\beta_{\alpha,d}|x|^{-(d+\alpha)/2}$, where
	\begin{equation}
		\label{E:constCBeta}
		C_{d,\alpha}=\pi^{-d/2} 2^{-\alpha} \frac{\Gamma((d-\alpha)/2)}{\Gamma(\alpha/2)} \quad \mbox{and} \quad
		\beta_{\alpha,d}=\pi^{-d/4} \frac{\Gamma((d+\alpha)/4)}{\Gamma((d-\alpha)/4)} \sqrt{\frac{\Gamma((d-\alpha)/2)}{\Gamma(\alpha/2)}}.
	\end{equation}
	The heat equation with this noise was studied in Hu et al \cite{HHNT}.
\end{example}

\begin{example}
	\label{E:fBM}
  $\gamma(x)=\prod_{i=1}^{d} |x_i|^{-\alpha_i}$ for some
  $\alpha_1,\ldots,\alpha_d \in (0,1)$.  In this case,
  $\mu(d\xi)=\prod_{i=1}^{d}(C_{1,\alpha_i}|\xi_i|^{-(1-\alpha_i)}d\xi_i)$ and
  $K(x)=\prod_{i=1}^{d}(\beta_{\alpha_i,1} |x_i|^{-(1+\alpha_i)/2})$, where
  $C_{1,\alpha_i}$ and $\beta_{\alpha_i,1}$ are given by \eqref{E:constCBeta}.
  The function $\gamma$ satisfies the scaling relation \eqref{E:scaleGamma} with
  $\alpha=\sum_{i=1}^{d}\alpha_i$.  In the parametrization $\alpha_i=2-2H_i$
  with $H_i \in (1/2,1)$, the noise $W$ corresponds to the fractional Brownian
  sheet with indices $H_1,\ldots,H_d$.  The heat equation with this noise was
  studied in Hu \cite{hu01}.
\end{example}


\begin{example}
	\label{E:Hybrid}
  In the case $d=3$, one can construct another example by grouping coordinates and combining the previous two examples, i.e. letting
    $\gamma(x)=|x_1|^{-\alpha_1}|(x_2,x_3)|^{-\alpha_2}$ for $x=(x_1,x_2,x_3)\in \R^3$, with $\alpha_1 \in (0,1)$ and $\alpha_2 \in (0,2)$. Then
    $\mu(d\xi)=C_{1,\alpha_1}C_{2,\alpha_2}|\xi_1|^{-(1-\alpha_1)}\linebreak
    |(\xi_2,\xi_3)|^{-(2-\alpha_2)}$. The scaling property
    \eqref{E:scaleGamma} is satisfied with $\alpha=\alpha_1+\alpha_2\in(0,3)$.
\end{example}

\bigskip

We introduce the following variational quantity:
for any nonnegative definite measure $f$ and $\theta>0$, define
\begin{equation}
	\label{E:cM}
	\cM(f,\theta)=\sup_{g \in \cF_d} \left\{
		\InPrd{g^2*f,g^2}^{1/2}
		-\frac{\theta}{2} \int_{\bR^d}|\nabla g(x)|^2 dx
	\right\},
\end{equation}
where $*$ is the convolution, $\langle \cdot,\cdot \rangle$ is the inner product in $L^2(\bR^d)$,
\begin{align*}
	\cF_d=\left\{g \in W^{1,2}(\bR^d);\Norm{g}_{L^2(\R^d)}=1\right\}
\end{align*}
and $W^{1,2}(\bR^d)$ is the Sobolev space. This quantity is related to the large deviation results for the local time of stable processes. See for instance \cite{chen-li-rosen05} and the references therein.

We denote $\cM=\cM(\gamma,1)$ and $\cM(f) = \cM(f,1)$.
In particular, for $d=1$, we have ${\cal M}(\delta_0)=(3/4)(1/6)^{1/3}$, which follows by Lemma 7.2 of \cite{CL04} with $p=2$.

\medskip

Throughout the article, for any $p>0$, we use $\Norm{\cdot}_p$ to denote the $L^p(\Omega)$-norm.

Here are the main results of this article.

\begin{theorem}
	\label{T:Main}
  If either Assumption \ref{A:Main} holds with $0<\alpha<d \leq 3$ or Assumption
  \ref{A:White} holds, then equation \eqref{E:SWE} has a unique solution
  $u(t,x)$ in $L^p(\Omega)$ for all $p\ge 2$, $t>0$ and $x\in\R^d$.  Moreover,
  we have the following moment asymptotics:
	\begin{enumerate}[(i)]
		\item Under Assumption \ref{A:Main} with $0<\alpha<d\le 3$, by setting
			\begin{align}
				\label{E:tp}
				t_p := (p-1)^{1/(4-\alpha)}t,
			\end{align}
			it holds that
			\begin{align}
				\label{E:Main}
				\lim_{t_p\to \infty}t_p^{-\frac{4-\alpha}{3-\alpha}}\log \Norm{u(t,x)}_p
				&= \theta^{\frac{1}{3-\alpha}}
			    	   \left(\frac{1}{2}\right)^{\frac{\alpha}{2(3-\alpha)}}
				   \frac{3-\alpha}{2}
				   \left(\frac{2\cM^{1/2}}{4-\alpha}\right)^{\frac{4-\alpha}{3-\alpha}}.
			\end{align}
			In particular, by freezing an arbitrary $p\ge 2$ in \eqref{E:Main},
			\begin{equation}
				\label{E:tMain}
				\lim_{t\to \infty} t^{-\frac{4-\alpha}{3-\alpha}} \log \E \left(|u(t,x)|^p\right)
				=p (p-1)^{\frac{1}{3-\alpha}} \theta^{\frac{1}{3-\alpha}}
				\left(\frac{1}{2} \right)^{\frac{\alpha}{2(3-\alpha)}} \frac{3-\alpha}{2}
				\left( \frac{2\cM^{1/2}}{4-\alpha}\right)^{\frac{4-\alpha}{3-\alpha}},
			\end{equation}
			and by freezing an arbitrary $t>0$ in \eqref{E:Main},
			\begin{equation}
				\label{E:pMain}
				\lim_{p\to \infty} p^{-\frac{4-\alpha}{3-\alpha}} \log \E\left(|u(t,x)|^p\right)
				= t^{\frac{4-\alpha}{3-\alpha}}\theta^{\frac{1}{3-\alpha}}
				\left(\frac{1}{2} \right)^{\frac{\alpha}{2(3-\alpha)}}\frac{3-\alpha}{2}
				\left( \frac{2\cM^{1/2}}{4-\alpha}\right)^{\frac{4-\alpha}{3-\alpha}}.
			\end{equation}
		\item Under Assumption \ref{A:White}, \eqref{E:Main} -- \eqref{E:pMain} are still true
			provided that all $\alpha$'s and $\cM$'s in part (i) are replaced by $d$ and $\cM(\delta_0)$, respectively.
	\end{enumerate}
\end{theorem}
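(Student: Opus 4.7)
The plan is to reduce the theorem to an asymptotic analysis of the Wiener chaos expansion of $u(t,x)$, combined with the exact scaling properties of the wave kernel. First, I would write the mild Skorohod solution as
$$u(t,x)=\sum_{n\ge 0}\theta^{n/2}\,I_n\bigl(\tilde f_n(\cdot\,;t,x)\bigr),\qquad f_n(\mathbf{y};t,x)=\int_{0<s_n<\cdots<s_1<t}G(t-s_1,x-y_1)\prod_{j=2}^{n}G(s_{j-1}-s_j,y_{j-1}-y_j)\,d\mathbf{s},$$
where $\tilde f_n$ is the symmetrization in $\mathbf{y}=(y_1,\ldots,y_n)$ and $I_n$ is the $n$-th multiple Skorohod integral against the purely spatial noise $W$. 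Well-posedness in $L^p(\Omega)$ would follow from finiteness of $\sum_n n!\,\theta^n\|\tilde f_n\|_{\cH^{\otimes n}}^2$ combined with Nelson's hypercontractive bound $\|I_n(\tilde f_n)\|_p\le(p-1)^{n/2}\|I_n(\tilde f_n)\|_2$ valid for every $p\ge 2$.

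Next I would exploit the scaling. The wave kernel satisfies $G(ct,c\cdot)=c^{1-d}G(t,\cdot)$ distributionally (which is routine for $d=1,2$ and follows from $\cF G(t,\cdot)(\xi)=\sin(t|\xi|)/|\xi|$ when $d=3$), and under Assumption \ref{A:Main} the covariance obeys $\gamma(cx)=c^{-\alpha}\gamma(x)$; Assumption \ref{A:White} is the formal limit $\alpha=d$, $\gamma=\delta_0$. The substitution $s_i\mapsto t\sigma_i$, $y_i\mapsto x+tz_i$ then yields
$$\|\tilde f_n(\cdot\,;t,x)\|_{\cH^{\otimes n}}^2=t^{(4-\alpha)n}\,\|\tilde g_n\|_{\cH^{\otimes n}}^2,\qquad \tilde g_n:=\tilde f_n(\cdot\,;1,0),$$
independent of $x$ by spatial stationarity of $W$. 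Combined with hypercontractivity, this turns $\|u(t,x)\|_p^2$ into a power series in the single parameter $\lambda:=t_p^{4-\alpha}=(p-1)t^{4-\alpha}$, which is precisely the substitution that makes $t_p$ in \eqref{E:Main} the correct scale.

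The heart of the proof, and the main obstacle, is a precise large-$n$ asymptotic of the chaos norms, in which the variational quantity $\cM$ in \eqref{E:cM} appears as the rate constant. Using the factorization $\gamma=K*K$ from Assumption \ref{A:Main}(iii) the Hilbert-space norm rewrites as $\|\tilde g_n*K^{\otimes n}\|_{L^2}^2$, after which I would establish a bound of the form
$$\limsup_{n\to\infty}\,\bigl(n!\,\theta^n\,\|\tilde g_n\|_{\cH^{\otimes n}}^2\bigr)^{1/n}\cdot n^{3-\alpha}\;\le\;\kappa(\theta,\alpha,\cM)$$
with explicit $\kappa$, by iterating the Sobolev-type inequality underlying the definition of $\cM$ together with the scaling \eqref{E:scaleGamma} at each step. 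A matching lower bound is obtained by plugging a near-optimizer $g\in\cF_2$ of $\cM$ into a product trial kernel for $\tilde g_n$. A Laplace-type evaluation of the resulting series in $\lambda$ then produces both the exponent $\lambda^{1/(3-\alpha)}$ and the exact constant on the right-hand side of \eqref{E:Main}.

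The principal difficulty, compared to the heat-equation setting, is the absence of any Feynman--Kac formula for the moments of $u(t,x)$: the rate $\cM$ cannot be read off a Brownian large-deviation principle and must instead be extracted directly from the chaos norms, which forces the iterative Sobolev estimate to match the variational supremum sharply. A secondary subtlety is matching the hypercontractive upper bound (tight on each individual chaos but a priori loose for the whole sum) with a lower bound on $\|u(t,x)\|_p$; I would handle this by isolating the dominant chaos of order $n_*\sim\lambda^{1/(3-\alpha)}$ identified by the saddle point and transferring its $L^2$-lower bound to $L^p$ via a Paley--Zygmund-type argument. The white-noise case of part (ii) is handled in parallel, with the formal substitution $\alpha=d$, after mollifying $\gamma=\delta_0$ to legitimize the scaling step and passing to the limit at the end.
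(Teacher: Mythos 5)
Your overall skeleton (chaos expansion, the scaling identity $\|\widetilde f_n(\cdot,0;t)\|_{\cH^{\otimes n}}^2=t^{(4-\alpha)n}\|\widetilde f_n(\cdot,0;1)\|_{\cH^{\otimes n}}^2$, reduction to the single parameter $t_p$, hypercontractivity for the upper bound, and a Mittag--Leffler evaluation of the resulting series) matches the paper, and your term-by-term hypercontractive bound would indeed give the sharp upper constant on the exponential scale. But the two steps you identify as ``the heart of the proof'' are exactly where your proposal has genuine gaps. First, the precise large-$n$ rate $\lim_n\frac1n\log\bigl((n!)^{4-\alpha}\|\widetilde f_n(\cdot,0;1)\|_{\cH^{\otimes n}}^2\bigr)$ with the constant $\cM$ is not obtained in the paper by ``iterating the Sobolev-type inequality underlying $\cM$'' --- no such iteration is given or known here, precisely because there is no Feynman--Kac formula. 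The paper instead takes a Laplace transform in time, which converts the time-ordered convolution of wave kernels into products of resolvent kernels $1/(1+|\xi|^2)$ and reduces everything to the quantity $T_n$ of \eqref{E:def-Tn}, whose exponential asymptotics are the known large-deviation results \eqref{BCR0} of Chen and Bass--Chen--Rosen, with $\rho$ identified as $\cM(\gamma,2)^{(4-\alpha)/2}$. Passing between the Laplace transform and fixed time requires the reverse Cauchy--Schwarz inequality of Lemma \ref{L:NonDecr} (using that $t\mapsto H_n(t,\bx)$ is nonnegative and nondecreasing, which needs $G\ge 0$, hence $d\le 3$, and $K\ge 0$) in one direction, and the two-independent-exponential-variables argument giving \eqref{LB-Tn} in the other. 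Your sketch supplies none of this machinery, and without it you have no route to the sharp constant.

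Second, your lower-bound strategy --- isolating the dominant chaos and transferring its $L^2$ bound to $L^p$ by Paley--Zygmund --- cannot produce the correct $p$-dependence. Any bound derived from second-moment information on individual chaoses yields at best the $p=2$ rate, whereas \eqref{E:Main} requires the lower bound to grow like $\exp\bigl(c\,(p-1)^{1/(3-\alpha)}t^{(4-\alpha)/(3-\alpha)}\bigr)$, strictly faster in $p$; in particular \eqref{E:pMain} would be out of reach. The paper's mechanism is Gaussian duality: pair $u(t,0)$ with the Wick exponential $Z_f=\exp(W(f)-\tfrac12\|f\|_\cH^2)$, apply H\"older with conjugate exponent $q$ (this is where the $p$-dependence enters), and optimize over time-scaled test functions $f_\tau$ with $\tau=(p-1)t$. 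This forces one to control the sign of $W_n(t,\cF f)$, which is done by restricting to $f\in\cH_+$ and again exploiting $G\ge0$ for $d\le3$ --- an issue your proposal does not address at all. Finally, the identification $\sup_{f\in\cH_+,\|f\|_\cH=1}\rho(\cF f)=\rho^{1/2}$, needed to recover $\cM$ in the lower bound, is a separate variational computation (via the optimal choice $f\propto g^2$) that has no counterpart in your ``product trial kernel'' suggestion.
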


The case when $\dot{W}(x)$ is white (i.e., $\gamma(\cdot)=\delta_0(\cdot)$) is of particular
interest due to its root in physics and also due to the recent investigation of the {\it
Kardar-Parisi-Zhang (KPZ) equation} \cite{KardarParisiZhang1986}.  In the setting of parabolic
equations, only the one-dimensional equation run by white noise has a full-scale solvability. The
two and higher dimensions have to be treated with great care. The large-time asymptotic behaviour of
the moments of the Skorohod solution of the parabolic Anderson model with white noise in time has
been stipulated as conjecture (1.19) in \cite{xchen17} (in arbitrary dimension $d$). As far as we
know, this conjecture remains an open problem.

 Here we also like to mention the
paper by  Hairer and Labb\'e \cite{HL} on the solvability for a two-dimensional parabolic Anderson
equation with time-independent and space-white noise, where the pathwise solution is
constructed after some logarithmic renormalization.  In the setting of hyperbolic equations, we
have seen from Theorem \ref{T:Main} that the system behaves ``normally'' in the case of
two-dimensional white noise, namely, there exists an $L^2(\Omega)$ solution for all
$t>0$ and $x\in\bR^2$.  Moreover, Theorem \ref{T:Critical} below shows that in case of
three-dimensional white noise, we only have short-time existence of $L^2(\Omega)$
solutions.

In order to state the next theorem, we first need to introduce some notation.
Assume that $d=\alpha=3$. For any $\theta>0$ and $p\ge 2$, we define the {\it
critical time for the $p$-th moment} as follows:
\begin{align}
	\label{E:Tp}
	T_p=T_p(\theta) := \frac{\sqrt{2}}{\theta (p-1)  \cM(\delta_0)^{1/2}}.
\end{align}
Recall that the solution to \eqref{E:SWE} is interpreted in terms of the Wiener
chaos expansion and if the solution $u(t,x)$ exists in $L^2(\Omega)$, its second
moment has to be equal to
\begin{align}
	\label{E:SecondM}
	\E\left(|u(t,x)|^2\right)
	=\sum_{n\geq 0} \theta^n n! \Norm{\widetilde{f}_n(\cdot,x;t)}_{\cH^{\otimes n}}^2<\infty;
\end{align}
see Section \ref{S:Pre} for the notation.
\begin{theorem}
	\label{T:Critical}
	Under Assumption \ref{A:Critical}, we have the following two scenarios:
	\begin{enumerate}[(i)]
    \item If $t<T_2$, then for all $x\in\R^d$, the series in \eqref{E:SecondM}
      converges.  Hence, there exists a unique solution $\left\{u(t,x):\: t\in
      \left(0,T_2\right),\: x\in\R^d\right\}$ to \eqref{E:SWE} in $L^2(\Omega)$.
      In general, for any $p\ge 2$, there exists a unique solution
      $\left\{u(t,x):\: t\in \left(0,T_p\right),\: x\in\R^d\right\}$ to
      \eqref{E:SWE} in $L^p(\Omega)$.
		\item If $t> T_2$, then for all $x\in\R^d$, the series in \eqref{E:SecondM} diverges.
			Hence, there is no $L^2(\Omega)$-solution to \eqref{E:SWE} whenever $t>T_2$.
	\end{enumerate}
\end{theorem}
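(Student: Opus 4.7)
The plan is to analyze the Wiener chaos expansion of $u(t,x)$ and pin down the exact exponential growth rate of $n!\|\tilde f_n(\cdot,x;t)\|_{\cH^{\otimes n}}^2$ as $n\to\infty$; both statements then follow by comparison with a geometric series. Concretely, by Picard iteration one writes $u(t,x)=\sum_{n\geq 0}\theta^{n/2}I_n(f_n(\cdot,x;t))$, where
$$f_n(y_1,\dots,y_n;t,x)=\int_{\{0<s_n<\cdots<s_1<t\}} G(t-s_1,x-y_1)\prod_{k=2}^n G(s_{k-1}-s_k,y_{k-1}-y_k)\,\mathrm{d}s,$$
interpreted distributionally in the $y$-variables since $G(\tau,\cdot)$ is a surface measure on the sphere of radius $\tau$ when $d=3$. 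Under Assumption~\ref{A:Critical}, $\cH=L^2(\R^3)$, and \eqref{E:SecondM} holds whenever its right-hand side is finite.

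The first step is to rewrite $n!\|\tilde f_n\|_{\cH^{\otimes n}}^2$ on the Fourier side. Using $\cF G(\tau,\cdot)(\xi)=\sin(\tau|\xi|)/|\xi|$ together with Plancherel, $n!\|\tilde f_n\|^2$ becomes a $2n$-fold simplex integral in the time variables against a product of oscillatory Fourier factors. The scaling $G(\lambda\tau,\lambda\,\cdot)\sim \lambda^{-1}G(\tau,\cdot)$ then lets one factor out $t$, producing
$$n!\,\|\tilde f_n(\cdot,x;t)\|^2_{\cH^{\otimes n}}=t^{n}\,J_n,$$
with a universal positive sequence $(J_n)$ independent of $x$ and $t$ (as a consistency check: direct computation gives $\|f_1\|_{\cH}^2=t/(16\pi)$, matching the linear scaling).

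The central technical step is to establish
$$\lim_{n\to\infty}\frac{1}{n}\log J_n=\log\sqrt{\cM(\delta_0)/2},$$
which is where the variational constant $\cM(\delta_0)=\sup_{\|g\|_{L^2}=1}\{\|g\|_{L^4}^2-\tfrac12\|\nabla g\|_{L^2}^2\}$ enters. The anticipated argument is a Laplace-method analysis on the time simplex: the upper bound comes from the variational characterization after a change of variables that re-casts the Fourier integral as a quadratic form against $L^2$-densities, while the matching lower bound is obtained by inserting a near-maximizer $g$ of $\cM(\delta_0)$ into a saddle-point ansatz. This is the main obstacle, because in $d=3$ the wave kernel is singular and its Fourier symbol $\sin(\tau|\xi|)/|\xi|$ decays only like $|\xi|^{-1}$, so the cancellations linking the $n$ successive time intervals must be handled with care.

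Once the exponential rate is known, $(n!\|\tilde f_n\|^2)^{1/n}\to t\sqrt{\cM(\delta_0)/2}$, so the series $\sum_n\theta^n n!\|\tilde f_n\|^2$ converges iff $\theta t\sqrt{\cM(\delta_0)/2}<1$, which rearranges to $t<\sqrt{2}/(\theta\sqrt{\cM(\delta_0)})=T_2$. This settles the $L^2$ claim in (i) and the entirety of (ii). For $p>2$ in (i), the Wiener chaos hypercontractivity bound $\|I_n(f)\|_p\leq(p-1)^{n/2}\|I_n(f)\|_2$ gives
$$\|u(t,x)\|_p\leq\sum_n\bigl((p-1)\theta\bigr)^{n/2}\sqrt{n!\|\tilde f_n\|^2},$$
and the same asymptotic yields convergence precisely when $(p-1)\theta\,t\sqrt{\cM(\delta_0)/2}<1$, i.e.\ $t<T_p$, which completes part (i). Uniqueness in both parts follows from the linearity of the Skorohod equation and the orthogonality of distinct Wiener chaoses.
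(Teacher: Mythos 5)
Your overall skeleton is the same as the paper's: reduce everything to the exponential growth rate of the chaos coefficients, note that under Assumption~\ref{A:Critical} the scaling \eqref{E:scale-white} gives $n!\,\Norm{\widetilde{f}_n(\cdot,x;t)}_{\cH^{\otimes n}}^2=t^n R_n$ with $R_n=n!\,\Norm{\widetilde{f}_n(\cdot,0;1)}_{\cH^{\otimes n}}^2$, apply Cauchy--Hadamard to get the dichotomy at $t=T_2$, and use hypercontractivity to convert the $L^2$ radius into the $L^p$ radius $T_p$ (your term-by-term bound $\Norm{I_n(f)}_p\le (p-1)^{n/2}\Norm{I_n(f)}_2$ plus Minkowski is exactly \eqref{E_:pmoment}; the paper's part (iii) of Theorem~\ref{T:Upper} packages the same idea via L\^e's comparison $\Norm{u(t,x)}_p\le\Norm{u(t_p,x)}_2$). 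Your claimed rate $R=\sqrt{\cM(\delta_0)/2}$ is the correct one ($R=2^{-1/2}\cM(\delta_0)^{1/2}$ from \eqref{E:lim2} with $\alpha$ replaced by $d=3$), and the algebra $\theta t R<1\Leftrightarrow t<T_2$ is right.

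The problem is that the entire content of the theorem sits in the limit $\lim_n n^{-1}\log R_n=\log R$, and you do not prove it; you explicitly defer it as ``the main obstacle'' with an ``anticipated'' Laplace-method sketch on the time simplex. That sketch would not go through as stated: the integrand on the Fourier side is a signed, oscillatory product of factors $\sin(\tau|\xi|)/|\xi|$ over a simplex of time increments, and there is no obvious positive quadratic form to which a direct saddle-point analysis applies. The paper circumvents this by an entirely different chain of reductions: (a) take a Laplace transform in $t$, which replaces each oscillatory factor by the resolvent $1/(1+|\xi|^2)$ (Lemma~\ref{L:LaplaceW} and \eqref{E:FLn}); (b) for the upper bound, exploit that $t\mapsto H_{n,\varepsilon}(t,\bx)$ is nonnegative and nondecreasing --- which in $d=3$ requires mollifying the singular surface measure $G$ by $G_\varepsilon=G*p_\varepsilon$ --- so that the reverse Cauchy--Schwarz inequality of Lemma~\ref{L:NonDecr} converts the Laplace transform of $\Norm{\widetilde{f}_n}^2$ into the quantity $T_n$ of \eqref{E:def-Tn}; (c) for the matching lower bound, use two independent exponential times and Cauchy--Schwarz together with the scaling of $J_n(t,t)$ to get \eqref{LB-Tn}; and (d) invoke the Bass--Chen--Rosen large deviation theorem \eqref{BCR0} and the identification $\rho=2^{-\alpha/2}\cM^{(4-\alpha)/2}$ to evaluate $\lim_n n^{-1}\log\bigl(T_n/(n!)^2\bigr)$. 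Steps (b)--(d) are precisely what your sketch omits, and (d) in particular is a deep imported result, not something recoverable from a generic variational ansatz. Two smaller points: your consistency check is off (a direct computation gives $\Norm{f_1(\cdot,0;t)}_{\cH}^2=t/(4\pi)$, not $t/(16\pi)$), and for part (ii) you need the \emph{lower} bound $\liminf_n n^{-1}\log R_n\ge\log R$ separately from the upper bound, which your proposal does not distinguish.
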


The existence of the solution in Theorem \ref{T:Main} is proved in Theorem \ref{T:iff}.
The matching upper and lower bounds in \eqref{E:Main} are proved in Theorems \ref{T:Upper} and \ref{T:Lowbound}, respectively.
The two limits \eqref{E:tMain} and \eqref{E:pMain} are direct consequences of \eqref{E:Main}.
Theorem \ref{T:Critical} is covered by part (iii) of both Theorems~\ref{T:p2} and~\ref{T:Upper}.
\bigskip

\begin{remark}
	\label{R:Dalang}
	It is known that {\em Dalang's condition}:
	\begin{equation}
		\label{E:Dalang}
		\int_{\bR^d}\frac{1}{1+|\xi|^2}\mu(d\xi)<\infty,
	\end{equation}
	is the necessary and sufficient condition for the existence and uniqueness of the stochastic heat equation:
	\begin{equation}
		\label{heat}
		\frac{\partial u}{\partial t}(t,x)=\frac{1}{2}\Delta u(t,x)+u(t,x)\dot{W}(x),\ t>0,x\in \bR^d; \quad u(0,x)=1,
	\end{equation}
  with the same noise $\dot{W}$ as above. This is due to the fact that the
  Fourier transform of the heat kernel is nonnegative.  In contrast, the
  oscillatory nature of the Fourier transform of the wave kernel (see
  \eqref{E:FG}) makes \eqref{E:Dalang} only a sufficient condition.  Note that
  under Assumption \ref{A:Main}, Dalang's condition \eqref{E:Dalang} becomes
  $\alpha\in (0,2\wedge d)$.  The existence and uniqueness part of Theorem
  \ref{T:Main} leverages this oscillatory property and hence requires weaker
  conditions than \eqref{E:Dalang}.
\end{remark}

\begin{remark}
	\label{R:asyHeat}
  Similar to part (i) of Theorem \ref{T:Main}, under Assumption \ref{A:Main},
  for all $d\ge 1$ and $\alpha\in (0,2\wedge d)$,  it has been be proved in Chen
  \cite{xchen17} and \cite{xchen19} that the solution to equation \eqref{heat}
  satisfies: for any $p\geq 2$,
	\begin{equation}
		\label{E:asyH1}
		\lim_{t_p'\to \infty} (t_p')^{-\frac{4-\alpha}{2-\alpha}} \log \|u(t,x)\|_p
			=\theta^{\frac{2}{2-\alpha}}
			\frac{2-\alpha}{4} \left(\frac{4\cM}{4-\alpha} \right)^{\frac{4-\alpha}{2-\alpha}},
	\end{equation}
	where $t_p'=t(p-1)^{2/(4-\alpha)}$. In particular, by freezing $p\geq 2$ in \eqref{E:asyH1}, we have that:
	\begin{equation}
		\label{E:asyHeat}
		\lim_{t\to \infty} t^{-\frac{4-\alpha}{2-\alpha}} \log \E \left[u(t,x)^p\right]
		=p (p-1)^{\frac{2}{2-\alpha}} \theta^{\frac{2}{2-\alpha}}
		\frac{2-\alpha}{4} \left(\frac{4\cM}{4-\alpha} \right)^{\frac{4-\alpha}{2-\alpha}}.
	\end{equation}
  Here we point out that the moment asymptotics in both \cite{xchen17} and
  \cite{xchen19} mainly target on the parabolic Anderson equations with
  time-dependent Gaussian noise.  Given the fact that the approach in
  \cite{xchen17} and \cite{xchen19} (and in other recent papers on the
  asymptotics in the parabolic setting) is based on the Feynman-Kac moment
  representation
	\begin{equation*}
		\E\left(\vert u(t,x)\vert^p\right)=\E\left(\exp\bigg\{\theta^2\sum_{1\le j<k\le p}
		\int_0^t\!\int_0^t\gamma_0(s-r)\gamma\big(B_j(s)-B_k(r)\big)dsdr\bigg\}\right),
	\end{equation*}
	the time-independent case is often viewed as the special case, as far as
	moment intermittency is concerned, of
	the time-dependence setting by taking the time-covariance $\gamma_0(\cdot)=1$.
\end{remark}

\begin{remark}
	\label{R:Blowup}
  In the case of the stochastic heat equation with more general noises including
  the noise that we study here, characterizing the sharp blowup time for the
  $p$-th moment at the critical case has been recently studied by Chen et al
  \cite{CDOT20}; see Theorem 3.14, ibid.  In the case of the stochastic wave
  equation, the difficulty is the lack of the Feynman-Kac representation of the
  moments. Hence, here we are only able to establish the sharp transition for
  the second moment instead of the general $p$-th moment, $p\ge 2$.  We
  conjecture that part (ii) of Theorem \ref{T:Critical} is true for all $p\ge
  2$, that is, $T_p$ in \eqref{E:Tp} is the sharp transition time for the $p$-th
  moment for $p\ge 2$ in the sense that
	\begin{align}
		\label{E:Conjecture}
		\begin{cases}
			\displaystyle	\limsup_{N\to\infty} \Norm{u_N(t,x)}_p < \infty  & \text{if $t<T_p(\theta)$},\\[0.5em]
			\displaystyle	\liminf_{N\to\infty} \Norm{u_N(t,x)}_p = \infty & \text{if $t>T_p(\theta)$},
		\end{cases}
	\end{align}
	with $u_N(t,x) := 1+\sum_{n=1}^{N} \theta^{n/2}I_n\left(f_n(\cdot,x;t)\right)$; see \eqref{E:Series} for the notation.
	Note that for any $N<\infty$ fixed, $\Norm{u_N(t,x)}_p<\infty$ all $t>0$ and $x\in\R^d$ (see Lemma \ref{L:LpalaceNorm}).
\end{remark}

\begin{remark}
  The following representation plays a fundamental role in this paper thanks to
  the decomposition \eqref{E:gamma=K*K}:
	\begin{align}\begin{aligned}
		\label{E:keyNorm}
		\Norm{\widetilde{f}_n(\cdot,0;t)}_{\cH^{\otimes n}}^{2}
		 =&  \frac{1}{(n!)^2} \int_{(\bR^d)^n} \bigg[ \sum_{\sigma \in \Sigma_n}\int_{[0,t]_{<}^n}\int_{(\bR^d)^n}\\
		  &  \times \prod_{k=1}^{n}K(x_k-y_k) \prod_{k=1}^{n} G(s_{k}-s_{k-1},y_{\sigma(k)}-y_{\sigma(k-1)}) d{\bf y} d{\bf s}\bigg]^2 d{\bf x},
	\end{aligned}\end{align}
  where $\widetilde{f}_n(\cdot,x;t)$ is the kernel appearing in the Wiener chaos
  expansion of the solution, $\Sigma_n$ is the set of permutations of
  $1,\ldots,n$, and we use convention that $s_0=0$, and $y_{\sigma(0)}=0$.  Note
  that the integral in \eqref{E:keyNorm} needs to be properly handled when $d=3$
  because $G$ is a measure, which has been done in this paper through
  mollification.
\end{remark}

\begin{remark}
There are two approaches for the study of SPDEs driven by Gaussian noise: the Malliavin calculus approach based on the It\^o-Skorohod integral, and the pathwise approach, which is related to the Stratonovich integral and uses the regularity of the noise. In the case of the heat equation, it is possible to unify these approaches using the Feynman-Kac formula (see \cite{HHNT}). A similar study is not available at the moment for the wave equation. Therefore, it is unclear if the Skorohod solution that we investigate here coincides with the pathwise solution constructed using the connection with the Anderson Hamiltonian (see \cite{labbe13,labbe19,GUZ}). The relationship between the Skorohod solution and the pathwise solution is a different topic which may be treated in a separate project. An example of a wave equation whose solution exhibits finite-time blow-up can be found in \cite{ORSW}.
\end{remark}

\bigskip
Throughout this paper, the letter $\alpha$ is reserved for the scaling index in
\eqref{E:scaleGamma} and $\beta$ for
\begin{align}
	\label{E:beta}
	\beta :=
	\begin{cases}
		\displaystyle	\frac{4-\alpha}{3-\alpha} & \text{under Assumption \ref{A:Main};}\\[0.5em]
    \displaystyle	\frac{4-d}{3-d} & \text{under Assumption \ref{A:White}.}
	\end{cases}
\end{align}
Recall that in any dimension $d\geq 1$, the Fourier transform of the fundamental solution is
\begin{align}
	\label{E:FG}
	\cF G(t,\cdot)(\xi)=\frac{\sin(t|\xi|)}{|\xi|},\qquad \text{for all $\xi\in \mathbb{C}^d$ },
\end{align}
which satisfies the following scaling property:
\begin{equation}
	\label{E:scaleFG}
	\cF G(t,\cdot)(c\xi)=\frac{1}{c}\cF G(ct,\cdot)(\xi) \quad \mbox{for all} \ c>0.
\end{equation}

This article is organized as follows.
In Section \ref{S:Pre}, we introduce some basic elements of Malliavin calculus (see Nualart \cite{nualart06}),
and present some large deviation results that will be used in this paper.
Then in Section \ref{S:Solv}, we obtain solvability conditions.
The moment asymptotics for $p=2$ in \eqref{E:tMain} is proved in Section \ref{S:p=2}.
The upper and lower bounds for the asymptotics in \eqref{E:tMain} are proved in Sections \ref{S:upper} and \ref{S:lower},
respectively. Finally, one can find some auxiliary results in the appendices.

\section{Some preliminaries}
\label{S:Pre}
In this section, we present some preliminaries and set up some notation.

\subsection{Elements of Malliavin calculus}
\label{SS:Malliavin}

Let $\cH$ be the completion of $\cD(\bR^d)$ with respect to $\langle \cdot,\cdot \rangle_{\cH}$, where $\cD(\bR^d)$ is the space of infinitely differentiable functions on $\bR^d$ with compact support.
The map $\varphi \mapsto W(\varphi)$ is an isometry from $\cD(\bR^d)$ to $L^2(\Omega)$, which can be extended to $\cH$.
Then $\{W(\varphi);\varphi\in \cH\}$ is an isonormal Gaussian process,
i.e. $\E[W(\varphi)W(\psi)]=\langle \varphi,\psi \rangle_{\cH}$ for any $\varphi,\psi \in \cH$.

The space $\cH$ contains distributions $f\in \cS'(\bR^d)$ such that $\cF f \in L_{\bC}^2(\mu)$.
Let $|\cH|$ be the set of measurable functions $\varphi:\bR^d \to \bR$ such that $\InPrd{|\varphi|*\gamma, |\varphi|} <\infty$.
It is known that $|\cH| \subset \cH$.
If $\mu$ is a constant multiple of the Lebesgue measure, $\cH$ coincides with $L^2(\bR^d)$.

We denote by $\delta$ the \textit{Skorohod integral} with respect to $W$,
and by ${\rm Dom} \, \delta$ its domain. For any $u \in {\rm Dom}\, \delta$, we write
$$\delta(u)=\int_{\bR^d}u(x) W(\delta x).$$
By definition, $\delta(u)$ is a random variable in $L^2(\Omega)$.
We refer the reader to Section 1.3 of \cite{nualart06} for the precise definition of $\delta$.

\begin{definition}
	\label{D:Sol}
  A random field $u=\{u(t,x);t\geq 0,x\in \bR^d\}$ such that
  $\Norm{u(t,x)}_2<\infty$ for all $t\ge 0$ and $x\in\R^d$ is called {\em a mild
  solution} to equation \eqref{E:SWE} if the random field $\{G(t-s,x-y)u(s,y):\:
s\in[0,t],\: y\in\R^d\}$ is Skorohod integrable for each $s\in[0,t]$ fixed and
satisfies the following stochastic integral equation:
	\begin{equation}
		\label{E:Sol}
		u(t,x)=1+\sqrt{\theta} \int_0^t \left( \int_{\bR^d}G(t-s,x-y)u(s,y) W(\delta y)\right)ds.
	\end{equation}
\end{definition}
\bigskip

Intuitively, if the solution exists, it should be given by the series:
\begin{align}
	\label{E:Series}
	u(t,x)=1+\sum_{n\geq 1}\theta^{n/2}I_n(f_n(\cdot,x;t)),
\end{align}
where $I_n:\cH^{\otimes n} \to \cH_n$ is the multiple integral with respect to
$W$ and $\cH_n$ is the $n$-th Wiener chaos space.  If $d\leq 2$, the kernel
$f_n(\cdot,x;t)$ is an integrable function on $(\bR^d)^n$ given by
\begin{align}
\begin{aligned}
	\label{E:fn}
		f_n(x_1,\ldots,x_n,x;t) & =\int_{[0,t]_{<}^n}G(t-t_n,x-x_n)\ldots G(t_2-t_1,x_2-x_1)d\mathbf{t} \\
                            & =\int_{[0,t]_{<}^n}G(s_1,x_n-x)\ldots G(s_n-s_{n-1},x_1-x_2)d\mathbf{s},
\end{aligned}
\end{align}
where $[0,t]_{<}^n=\{(t_1,\ldots,t_n) \in [0,t]^n;t_1<\ldots<t_n\}$.
It can be shown that $f_n(\cdot,x;t)$ is finite.

If $d=3$, $f_n(\cdot,x;t)$ is a finite measure on $(\bR^3)^n$, given by:
\begin{align*}
	f_n(\cdot,x;t)=\int_{[0,t]_{<}^n}G(t-t_n,x-dx_n)G(t_n-t_{n-1},x_n-dx_{n-1})\ldots G(t_2-t_1,x_2-dx_1)d \mathbf{t}.
\end{align*}
We denote by $\widetilde{f}_n(\cdot,x;t)$ the symmetrization of $f_n(\cdot,x;t)$ in the variables $x_1,\ldots,x_n$. If $d\leq 2$,
\begin{align*}
	\widetilde{f}_n(x_1,\ldots,x_n,x;t)
	&=\frac{1}{n!}\sum_{\rho \in \Sigma_n} f_n(x_{\rho(1)},\ldots,x_{\rho(n)},x;t) \\
	&=\frac{1}{n!}\sum_{\rho \in \Sigma_n} \int_{[0,t]_{<}^n} G(t-t_n,x-x_{\rho(n)})\ldots G(t_2-t_1,x_{\rho(2)}-x_{\rho(1)}) d \mathbf{t}
\end{align*}
and in particular, for $x=0$,
\begin{equation}
\label{fn}
\widetilde{f}_n(x_1,\ldots,x_n,0;t)=\frac{1}{n!}\sum_{\sigma \in \Sigma_n} \int_{[0,t]_{<}^n} G(s_1,x_{\sigma(1)})\ldots G(s_n-s_{n-1},x_{\sigma(n)}-x_{\sigma(n-1)}) d \mathbf{s}.
\end{equation}

If $d=3$, $\widetilde{f}_n(\cdot,x;t)$ is a
measure with compact support. If $d\geq 4$, $f_n(\cdot,x;t)$ is a distribution whose Fourier transform can
be obtained via,
by setting $t_{n+1}=t$,
\begin{align}
	\label{E:Ffn}
	\cF f_n(\cdot,x;t)(\xi_1,\ldots,\xi_n)
	& =\e^{-i(\sum_{j=1}^n \xi_j)\cdot x}\int_{[0,t]_{<}^n} \prod_{k=1}^{n}\overline{\cF G(t_{k+1}-t_k,\cdot)\left(\sum_{j=1}^k\xi_j\right)} d \mathbf{t}.
\end{align}
Relation \eqref{E:Ffn} is true for any dimension $d$.
\bigskip

Setting $s_0=0$, we have:
\begin{align*}
\cF \widetilde{f}_n(\cdot,x;t)(\xi_1,\ldots,\xi_n)
	& =\e^{-i(\sum_{j=1}^n \xi_j)\cdot x}\sum_{\sigma \in \Sigma_n}\int_{[0,t]_{<}^n} \prod_{j=1}^{n}\overline{\cF G(s_{j}-s_{j-1},\cdot)\left(\sum_{k=j}^{n}\xi_{\sigma(k)}\right)} d \mathbf{s}.
\end{align*}

The following result gives the existence and uniqueness of solution. Its proof follows by a standard procedure.

\begin{theorem}
	\label{T:ExUn}
Let $d\geq 1$ be arbitrary.
  Fix any $T\in (0,\infty]$.  Suppose that $f_n(\cdot,x;t) \in \cH^{\otimes n}$
  for any $t\in (0,T)$, $x\in\R^d$ and $n\geq 1$.  Then equation \eqref{E:SWE}
  has a unique $L^2(\Omega)$-solution on $(0,T)\times\R^d$ if and only if the
  series in \eqref{E:Series} converges in $L^2(\Omega)$ for any
  $(t,x)\in(0,T)\times\R^d$, which is equivalent to the series in
  \eqref{E:ExUn2m} converges.  In this case, the solution is given by
  \eqref{E:Series} with the second moment given by
	\begin{align}
		\label{E:ExUn2m}
		\E\left(|u(t,x)|^2\right)
		=\sum_{n\geq 0} \theta^n n! \Norm{\widetilde{f}_n(\cdot,x;t)}_{\cH^{\otimes n}}^2, \quad \text{for any $(t,x)\in (0,T)\times\R^d$}.
	\end{align}
In particular, the $L^2(\Omega)$-solution exists under Dalang's condition:
\[
\int_{\bR^d}\frac{1}{1+|\xi|^2}\mu(d\xi)<\infty.
\]
\end{theorem}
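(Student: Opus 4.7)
The plan is to argue via uniqueness of the Wiener chaos expansion. Every random variable $F\in L^2(\Omega)$ measurable with respect to the isonormal Gaussian process $\{W(\varphi):\varphi\in\cH\}$ admits a unique orthogonal expansion $F=\sum_{n\ge 0}I_n(g_n)$ with symmetric kernels $g_n\in\cH^{\otimes n}$, and the isometry of multiple integrals gives
\[
\|F\|_2^2=\sum_{n\ge 0}n!\,\|g_n\|_{\cH^{\otimes n}}^2.
\]
The strategy is to use this uniqueness to pin down the kernels of any $L^2$-solution, and then to run the argument in reverse to build a solution whenever the resulting series converges.

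\textbf{Identification of the kernels.} Assume first that $u$ is a mild $L^2$-solution in the sense of Definition~\ref{D:Sol}. For each $(t,x)$, write $u(t,x)=\sum_{n\ge 0}I_n(g_n(\cdot\,;t,x))$. Substituting into \eqref{E:Sol} and applying, under the time integral, the elementary identity $\int_{\bR^d}I_n(g(\cdot,y))\,W(\delta y)=I_{n+1}(\widetilde g)$, one matches the $n$-th chaos on both sides and obtains the recursion
\[
g_n(y_1,\ldots,y_n;t,x)=\sqrt{\theta}\,\mathrm{Sym}\Bigl[\int_0^t G(t-s,x-y_n)\,g_{n-1}(y_1,\ldots,y_{n-1};s,y_n)\,ds\Bigr]
\]
with initial condition $g_0\equiv 1$, where $\mathrm{Sym}$ denotes symmetrization in $(y_1,\ldots,y_n)$. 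Iterating this recursion produces exactly $g_n(\cdot\,;t,x)=\theta^{n/2}\widetilde{f}_n(\cdot,x;t)$ with $f_n$ defined by \eqref{E:fn}. This single computation shows simultaneously that (a) the chaos expansion of any $L^2$-solution is uniquely determined, giving uniqueness; (b) the candidate series in \eqref{E:Series} is precisely this expansion; and (c) by orthogonality of the chaoses, the second moment is given by \eqref{E:ExUn2m}. Consequently, existence of an $L^2$-solution forces convergence of the series in \eqref{E:ExUn2m}.

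\textbf{Sufficiency and construction.} Conversely, suppose the series in \eqref{E:ExUn2m} converges for every $(t,x)\in(0,T)\times\R^d$. Then the partial sums $u_N(t,x)=1+\sum_{n=1}^{N}\theta^{n/2}I_n(\widetilde{f}_n(\cdot,x;t))$ form a Cauchy sequence in $L^2(\Omega)$, and their limit $u(t,x)$ is a well-defined $L^2(\Omega)$-valued random field. To verify the mild formulation \eqref{E:Sol}, I reverse the recursion of the previous paragraph: the standing hypothesis $f_n(\cdot,x;t)\in\cH^{\otimes n}$ together with the isometry of $I_n$ gives the Skorohod integrability of $(s,y)\mapsto G(t-s,x-y)u(s,y)$, and the identity \eqref{E:Sol} is then obtained by identifying the chaos coefficients on each side and invoking dominated convergence to pass $\delta$ through the infinite sum.

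\textbf{Main obstacle.} The most delicate point is the rigorous handling of the Skorohod integral in dimension $d=3$, where $G(t,\cdot)$ is a surface measure and the expression $\int_{\bR^d}G(t-s,x-y)u(s,y)W(\delta y)$ must be interpreted through mollification by an approximate identity $G_\e=G*\psi_\e$ with a passage to the limit, a device already employed elsewhere in the paper. Everywhere else the argument is a textbook application of the uniqueness of the Wiener chaos decomposition together with the isometry of $I_n$; the hypothesis $f_n(\cdot,x;t)\in\cH^{\otimes n}$ is exactly what makes every $I_n(\widetilde{f}_n(\cdot,x;t))$ a well-defined element of $L^2(\Omega)$, so the scheme above goes through uniformly in every dimension $d\le 3$ treated in the paper.
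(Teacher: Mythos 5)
Your argument is the standard Wiener--chaos identification that the paper itself invokes without detail (``the proof follows by a standard procedure''), and it is correct: the recursion obtained by matching chaoses in \eqref{E:Sol} pins down the kernels as $\theta^{n/2}\widetilde{f}_n(\cdot,x;t)$, orthogonality of the chaoses gives \eqref{E:ExUn2m} and the equivalence of the two convergence statements, and convergence of that numerical series is exactly what is needed to reverse the construction. The one point you pass over quickly --- verifying Skorohod integrability of $y\mapsto G(t-s,x-y)u(s,y)$ for each fixed $s$ in the converse direction, and the mollification needed when $d=3$ --- is handled in the same standard way and poses no obstruction, so your proof matches the paper's intended argument.
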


\subsection{Some large deviation results}

In Chen \cite{xchen07} and Bass et al \cite{BCR09}, it has been proved that
\begin{equation}
	\label{BCR0}
	\lim_{n \to \infty}\frac{1}{n} \log \frac{1}{(n!)^2}
        \int_{(\bR^d)^n} \left[ \sum_{\sigma \in \Sigma_n}\prod_{k=1}^n \frac{1}{1+|\sum_{j=k}^n\xi_{\sigma(j)}|^2}\right]^2
	\mu(d\xi_1) \ldots \mu(d\xi_n) =\log \rho,
\end{equation}
where
\begin{align}
	\label{E:rho}
	\rho=\sup_{\Norm{h}_{L^2(\R^d)}=1} \int_{\bR^d}
	\left[\int_{\bR^d} \frac{h(\xi+\eta)h(\eta)}{\sqrt{1+|\xi+\eta|^2}\sqrt{1+|\eta|^2}} d\eta\right]^2 \mu(d\xi).
\end{align}
More precisely, the setting of
$\mu(d\xi)=(2\pi)^{-d}d\xi$ (under Assumption \ref{A:White}) is obtained in \cite[(2.1) and Theorem 2.1]{xchen07} with $p=2$ and
$\psi(\cdot)=\vert\cdot\vert^2$.
In the  subsequent paper \cite{BCR09}, \eqref{BCR0} was established for $\gamma(x)=\vert x\vert^{-\alpha}$ with $\alpha \in (0,d)$ (or
$\mu(d\xi)=C_{d,\alpha}\vert\xi\vert^{-(d-\alpha)}d\xi$, where $C_{d,\alpha}$ is given by \eqref{E:constCBeta});
 see Theorem 2.3 and Lemma 2.2 in \cite{BCR09}\footnote{There is small error in the statement of Lemma 2.2 of \cite{BCR09}: in relation (2.6) of \cite{BCR09}, in the argument of $Q$, one has $\sum_{j=k}^n$ instead of $\sum_{j=1}^k$.}
The proof of \eqref{BCR0} under the setting of \cite{BCR09} can be easily adapted to the setting described in Assumption \ref{A:Main}.

By Theorem 1.5 of \cite{BCR09}, one can see that\footnote{There is a small error in Theorem 1.5 of Bass et al \cite{BCR09}
	which states that $\rho=(2\pi)^{-d}\Lambda_{\alpha}^{2-\alpha/2}$. The correct result is $\rho=\Lambda_{\alpha}^{2-\alpha/2}$.
	This error is due to the fact that in the first line of equation (7.22), ibid.,
	one should have $(2\pi)^{-dp}$ instead of $(2\pi)^{-d(p+1)}$,
	since $\cF (f^p)=(2\pi)^{-d(p-1)}(\cF f)^{*p}$.} $\rho= \Lambda_{\gamma}^{(4-\alpha)/2}$ where
      $\Lambda_\gamma = \cM(\gamma,2)$.
These results play an important role in this paper.
\medskip

The functional $\cM(\cdot,\cdot)$ has the following scaling property:
\begin{lemma}
	\label{L:scaleM}
	Let $f$ be an arbitrary nonnegative definite measure such that for some $\alpha< 4$,
	$f\left(c A\right)= c^{d-\alpha}f(A)$ for all $c>0$ and $A\in \mathcal{B}(\R^d)$.
	In case $f$ has a density, this scaling property becomes $f(cx)=c^{-\alpha} f(x)$ for all $x\in\R^d$.
	Then
	\begin{align}
		\label{E:scaleM}
		\cM\left(\Theta f, \theta \right)
		= \Theta^{\frac{2}{4-\alpha}} \theta^{-\frac{\alpha}{4-\alpha}}  \cM(f,1),
		\quad\text{for all $\Theta>0$ and $\theta>0$.}
	\end{align}
	In particular, when $f=\delta_0$ and $d<4$, \eqref{E:scaleM} still holds with $\alpha$ replaced by $d$.
\end{lemma}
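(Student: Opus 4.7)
The plan is to exploit the standard dilation $g_c(x) := c^{d/2} g(cx)$ for $c>0$, which is a bijection of $\cF_2$ onto itself since $\Norm{g_c}_{L^2(\R^d)} = \Norm{g}_{L^2(\R^d)} = 1$. This means that for every fixed $c>0$, the supremum defining $\cM(\Theta f,\theta)$ over $g\in\cF_2$ is unchanged if we replace $g$ by $g_c$. So I first compute how each of the two terms in the functional transforms under this dilation, then choose $c$ to convert the supremum on the left-hand side into $\cM(f,1)$.

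For the Dirichlet term, a direct change of variables gives $\int|\nabla g_c(x)|^2\,dx = c^2\int|\nabla g(x)|^2\,dx$. For the nonlocal term, I would unfold the convolution and use the scaling hypothesis on $f$. Writing $g_c^2 * f(x)= c^d\int g^2(c(x-y))\,f(dy)$ and substituting $y = x - u/c$ (so that the measure $f$ receives a factor $c^{\alpha-d}$ by the hypothesis $f(cA)=c^{d-\alpha}f(A)$, whether or not $f$ has a density), one obtains
\begin{equation*}
   g_c^2 * f (x) \;=\; c^{\alpha}\,(g^2*f)(cx).
\end{equation*}
Pairing with $g_c^2$ and changing variables $y=cx$ then yields
\begin{equation*}
   \InPrd{g_c^2*f,\,g_c^2} \;=\; c^{\alpha}\,\InPrd{g^2*f,\,g^2}.
\end{equation*}
The delta-measure case $f=\delta_0$ with $d<4$ falls under the same argument with $\alpha=d$, since $\delta_0(cA)=\delta_0(A)=c^{0}\delta_0(A)$; alternatively, $\InPrd{g_c^2*\delta_0,g_c^2}=\int g_c^4 = c^{d}\int g^4$, confirming the exponent.

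Substituting these two identities into the definition of $\cM(\Theta f,\theta)$, I get
\begin{equation*}
   \cM(\Theta f,\theta)
   \;=\;\sup_{g\in\cF_2}\Bigl\{\Theta^{1/2}c^{\alpha/2}\InPrd{g^2*f,g^2}^{1/2}-\tfrac{\theta c^2}{2}\int|\nabla g|^2\,dx\Bigr\}.
\end{equation*}
The idea now is to pick $c$ so that the two prefactors become equal, namely $\Theta^{1/2}c^{\alpha/2}=\theta c^2$, i.e.\ $c^{(4-\alpha)/2}=\Theta^{1/2}/\theta$. With that choice both prefactors equal the common value $K:=\theta c^2 = \Theta^{2/(4-\alpha)}\theta^{-\alpha/(4-\alpha)}$, and factoring it out of the supremum leaves exactly $\cM(f,1)$. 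This yields \eqref{E:scaleM}. The choice $\alpha<4$ is precisely what is needed to solve for $c>0$ in the displayed equation, which is the only mildly delicate point; beyond that the argument is an elementary rescaling.
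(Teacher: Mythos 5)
Your proof is correct and follows essentially the same route as the paper: both rescale by an $L^2$-preserving dilation, compute that the Dirichlet term scales like $c^2$ and the nonlocal term like $c^{\alpha}$, and then choose $c$ so that the two prefactors coincide (the paper writes the dilation as $\widetilde g(x)=B^{-d/2}g(B^{-1}x)$ with $B=c^{-1}$ and factors out $\theta$ first, but this is cosmetic). The only nitpick is your parenthetical about the substitution $y=x-u/c$: the scaling hypothesis governs dilations about the origin, so the clean derivation of $g_c^2*f=c^{\alpha}(g^2*f)(c\,\cdot)$ is via $(g_c^2*f)(x)=c^{d}\int g^2(cx-cy)\,f(dy)$ followed by the substitution $z=cy$, which uses only $f(c^{-1}B)=c^{\alpha-d}f(B)$; the identity you state is nonetheless correct.
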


This lemma is proved under slightly different settings in Lemmas B.2 and B.3 of Balan and Song \cite{balan-song19}.
We give the proof below for readers' convenience.

\begin{proof}
	For any $\theta,\Theta>0$, by the definition of $\cM$ in \eqref{E:cM}, we see that
	\begin{align*}
		\cM\left(\Theta f,\theta\right)
		&=\sup_{g \in \cF_d} \left\{ \InPrd{g^2* \Theta f,g^2}^{1/2}
			-\frac{\theta}{2} \int_{\bR^d}|\nabla g(x)|^2 dx \right\}\\
		&=\theta\sup_{g \in \cF_d} \left\{ \frac{\sqrt{\Theta}}{\theta}
			\InPrd{g^2*f,g^2}^{1/2}
			-\frac{1}{2} \int_{\bR^d}|\nabla g (x)|^2 dx \right\}.
	\end{align*}
	Let $g\in W^{1,2}(\R^d)$.
	For any $B>0$, by defining $\widetilde{g}(x) = B^{-d/2}g\left(B^{-1}x\right)$,
	one sees that the $\Norm{\cdot}_{L^2(\R^d)}$ norm is preserved, i.e., $\Norm{g}_{L^2(\R^d)} = \Norm{\widetilde{g}}_{L^2(\R^d)}$.
	Notice that $\cF \widetilde{g}(\xi) = B^{d/2}\cF g\left(B\xi\right)$, which implies that
	$\int_{\R^d}|\xi|^2 \left|\cF \widetilde{g} (\xi)\right|^2 d\xi = B^{-2} \int_{\R^d}|\xi|^2 \left|\cF
	g(\xi)\right|^2d\xi$, or equivalently,
	\begin{align*}
		\int_{\R^d}\left|\nabla \widetilde{g} (x)\right|^2 dx
		=B^{-2} \int_{\R^d}\left|\nabla g(x)\right|^2 dx.
	\end{align*}
	By the scaling property of $f$, we see that $\left(\widetilde{g}^2 *f \right)(x) =B^{-\alpha}
	\left(g^2*f\right)\left(B^{-1}x\right)$ and hence
	$\InPrd{\widetilde{g}^2*f,\widetilde{g}^2 } = B^{-\alpha} \InPrd{g^2*f,g^2}$.
	Combining these relations, we see that
	\begin{align*}
		\cM\left(\Theta f,\theta\right)
		&=\theta\sup_{\widetilde{g} \in \cF_d} \left\{
			\frac{\sqrt{\Theta}}{\theta} B^{\alpha / 2}
			\InPrd{\widetilde{g}^2*f,\widetilde{g}^2}^{1/2}
			-\frac{B^{2}}{2}  \int_{\bR^d}|\nabla \widetilde{g} (x)|^2 dx \right\}.
	\end{align*}
	Hence, by making the following choice of $B$
	\begin{align*}
		\frac{\sqrt{\Theta}}{\theta} B^{\alpha / 2} 	= B^2 \quad \Longleftrightarrow \quad
		B = \left(\frac{\sqrt{\Theta}}{\theta}\right)^{\frac{2}{4-\alpha}}\quad\text{and} \quad \alpha\ne 4,
	\end{align*}
  we see that $\cM$ has the desired scaling property \eqref{E:scaleM}.
\end{proof}

By the scaling property of $\cM$, namely, $\cM(\gamma,2)=\left(1/2 \right)^{\frac{\alpha}{4-\alpha}}\cM\left(\gamma,1\right)$,
we have the following important relation:
\begin{align}
	\label{E:rhoM}
	\rho=\left[\left(1/2\right)^{\frac{\alpha}{4-\alpha}}\cM\right]^{(4-\alpha)/2}=\left(1/2
	\right)^{\alpha/2}\cM^{(4-\alpha)/2}.
\end{align}
In the white noise case, one simply replaces $\alpha$ and $\cM$ in \eqref{E:rhoM} by $d$ and $\cM(\delta_0)$, respectively.

\section{Solvability}
\label{S:Solv}

The aim of this section is to prove the following theorem:

\begin{theorem}
	\label{T:iff}
	(i) Suppose that either Assumption \ref{A:Main} holds with $0<\alpha<d\leq 3$ or Assumption \ref{A:White} holds.
	Then equation \eqref{E:SWE} has a unique solution $\{u(t,x);t>0,x\in \bR^d\}$ and
	\begin{equation}
		\label{E:mom-p}
		\sup_{(t,x)\in [0,T] \times \bR^d}\bE\left(|u(t,x)|^p\right)<\infty  \quad \mbox{for all $p\geq 2$ and $T>0$}.
	\end{equation}
	(ii) Under Assumption \ref{A:Critical}, for any $p\ge 2$, equation \eqref{E:SWE} is well defined with finite $p$-th moments for all $t\in (0,T_p')$ and $x\in\R^d$, where
	\begin{align}
		\label{E:Tp'}
		T_p'  = \frac{4\pi}{\theta(p-1)}.
	\end{align}
\end{theorem}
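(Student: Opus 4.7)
The strategy reduces both parts to a chaos-by-chaos estimate and then invokes Wiener-chaos hypercontractivity. By Theorem~\ref{T:ExUn}, existence of a unique $L^2$-solution is equivalent to the convergence of the series $\sum_{n\ge 0}\theta^n\, n!\,\Norm{\widetilde f_n(\cdot,x;t)}_{\cH^{\otimes n}}^2$. Nelson's hypercontractivity $\Norm{I_n(g)}_p\le(p-1)^{n/2}\Norm{I_n(g)}_2$ for $p\ge 2$ then gives
\begin{equation*}
\Norm{u(t,x)}_p\;\le\;\sum_{n\ge 0}\bigl[\theta(p-1)\bigr]^{n/2}\bigl(n!\,\Norm{\widetilde f_n(\cdot,x;t)}_{\cH^{\otimes n}}^2\bigr)^{1/2}.
\end{equation*}
Because $\mu$ is translation-invariant and $x$ enters~\eqref{E:fn} only through a shift, $\Norm{\widetilde f_n(\cdot,x;t)}_{\cH^{\otimes n}}=\Norm{\widetilde f_n(\cdot,0;t)}_{\cH^{\otimes n}}$; the monotonicity of this quantity in $t$, which is visible from the time-simplex representation~\eqref{E:Ffn}, reduces~\eqref{E:mom-p} to bounding the chaos norms at $(t,x)=(T,0)$.

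\textbf{Part (i).} I would work on the Fourier side via~\eqref{E:Ffn} and use the elementary oscillatory bound
\begin{equation*}
\bigl|\cF G(s,\cdot)(\xi)\bigr|^2=\frac{\sin^2(s|\xi|)}{|\xi|^2}\;\le\;\frac{2s^2}{1+s^2|\xi|^2},
\end{equation*}
which is precisely the estimate that allows $\alpha$ to range up to $d$ rather than only $\alpha<2$ (cf.~Remark~\ref{R:Dalang}). The substitution $\xi_k\mapsto\xi_k/t$, combined with~\eqref{E:scaleFG} and the scaling~\eqref{E:scaleMu} of $\mu$, pulls an overall factor $t^{(4-\alpha)n}$ out of $\Norm{\widetilde f_n(\cdot,0;t)}_{\cH^{\otimes n}}^2$ and reduces the computation to the unit-time kernel. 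The resulting integral is majorized by the symmetric Riesz expression on the left-hand side of~\eqref{BCR0}; by Theorem 1.5 of~\cite{BCR09} together with~\eqref{E:rhoM}, it grows in $n$ at most as $\rho^n$ up to polynomial factors, while a factorial gain survives from the $\frac{1}{(n!)^2}$ prefactor and the sum over $\Sigma_n$ in~\eqref{E:keyNorm}. The resulting bound is summable against $[\theta(p-1)t^{4-\alpha}]^{n/2}$ for every $t>0$ and $p\ge 2$, giving the uniform bound~\eqref{E:mom-p}. Under Assumption~\ref{A:White} the same scheme applies with $\alpha$ replaced by $d$ and $\cM$ by $\cM(\delta_0)$.

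\textbf{Part (ii).} For Assumption~\ref{A:Critical} I would rely on a direct spatial computation. Although $G(s,\cdot)=\sigma_s/(4\pi s)$ is only a surface measure in $d=3$, the time integration in~\eqref{E:fn} smears the spheres into balls and produces an explicit function. Setting $x_{n+1}:=0$ and $y_k:=x_{k+1}-x_k$, and passing the time-simplex integral through the Dirac-on-sphere structure of $G$ in spherical coordinates $y_k=r_k\omega_k$ (where the Jacobian $r_k^2$ in $dy_k$ exactly cancels the $1/|y_k|^2$ density of $|f_n|^2$), one obtains
\begin{equation*}
f_n(x_1,\dots,x_n,0;t)=\prod_{k=1}^n\frac{1}{4\pi|y_k|}\,\mathbf{1}_{\{|y_1|+\cdots+|y_n|<t\}},\qquad \Norm{f_n(\cdot,0;t)}_{\cH^{\otimes n}}^2=\frac{t^n}{n!\,(4\pi)^n}.
\end{equation*}
Since $f\mapsto\widetilde f=\frac{1}{n!}\sum_{\sigma\in\Sigma_n}f^{\sigma}$ is an orthogonal projection in $\cH^{\otimes n}=L^2(\R^{3n})$, we have $\Norm{\widetilde f_n}\le\Norm{f_n}$, whence $n!\,\Norm{\widetilde f_n(\cdot,0;t)}_{\cH^{\otimes n}}^2\le(t/(4\pi))^n$. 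The geometric series $\sum_n[\theta(p-1)t/(4\pi)]^{n/2}$ is therefore finite exactly when $t<4\pi/[\theta(p-1)]=T_p'$, which yields~(ii).

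\textbf{Main obstacle.} The delicate step is the chaos estimate in part~(i): the regime $2\le\alpha<d\le 3$ lies outside Dalang's condition, so the standard majorization by $1/(1+|\xi|^2)$ that suffices for the heat equation is no longer available. One must exploit the full oscillatory cancellation of $\sin(s|\xi|)/|\xi|$ together with the scaling of $\mu$ to reach the BCR variational quantity $\rho$, and carefully balance the combinatorial factors $n!$, $(n!)^2$ and $|\Sigma_n|$ inside~\eqref{E:keyNorm}. Part (ii), by contrast, is comparatively soft once one observes that integrating in time turns the distributional kernel $G$ into an explicit, square-integrable function whose $L^2$-norm is directly computable.
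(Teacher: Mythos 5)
Your pivotal claim is that the pointwise bound $|\cF G(s,\cdot)(\xi)|^2\le \frac{2s^2}{1+s^2|\xi|^2}$ is ``the estimate that allows $\alpha$ to range up to $d$ rather than only $\alpha<2$.'' It is not: this inequality is just $\sin^2 x\le\min(x^2,1)$ in disguise and uses no oscillation whatsoever. Per variable it yields at best a factor of order $(1+|\xi|^2)^{-1}$ in the squared chaos norm (e.g.\ $\int_0^\infty \e^{-s}\,|\cF G(s,\cdot)(\xi)|\,ds\lesssim |\xi|^{-1}$ for large $|\xi|$ under this bound, hence $|\xi|^{-2}$ after squaring), so integrability against $\mu$ forces exactly Dalang's condition \eqref{E:Dalang}, i.e.\ $\alpha<2$ --- the very restriction the theorem is meant to remove. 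To majorize by the Riesz expression on the left of \eqref{BCR0} you need a factor $(1+|\xi|^2)^{-2}$ per variable, i.e.\ decay $|\xi|^{-2}$ \emph{before} squaring, and that extra power can only come from the sign changes of $s\mapsto\sin(s|\xi|)$. The paper obtains it by an entirely different mechanism that is absent from your proposal: using $\gamma=K*K$ with $K\ge 0$ (Assumption A(iii)) and the nonnegativity of $G$ for $d\le 3$, the squared norm is written as $\frac{1}{(n!)^2}\int H_n^2(t,\mathbf{x})\,d\mathbf{x}$ with $t\mapsto H_n(t,\mathbf{x})$ nonnegative and nondecreasing; the reverse Cauchy--Schwarz inequality of Lemma \ref{L:NonDecr} then bounds $\int_0^\infty \e^{-2t}H_n^2\,dt$ by $\bigl(\int_0^\infty \e^{-t}H_n\,dt\bigr)^2$, and the inner Laplace transform is computed \emph{exactly}, $\int_0^\infty \e^{-t}\cF G(t,\cdot)(\xi)\,dt=(1+|\xi|^2)^{-1}$, which is where the oscillatory gain actually enters (Lemma \ref{L:LpalaceNorm}); the scaling in $t$ then converts the Laplace-transform bound into the factorial decay of $\Norm{\widetilde f_n(\cdot,0;1)}_{\cH^{\otimes n}}^2$. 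Without some substitute for this Laplace-transform/monotonicity device, your route stalls at $\alpha<2$.

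\textbf{Part (ii): correct, and genuinely different from the paper.} Your explicit computation $f_n(\cdot,0;t)=\prod_{k=1}^n\frac{1}{4\pi|y_k|}\mathbf{1}_{\{\sum_k|y_k|<t\}}$ in the incremental variables, giving $\Norm{f_n(\cdot,0;t)}_{L^2}^2=\frac{t^n}{n!(4\pi)^n}$, is right (the surface-measure Jacobian cancellation checks out), and since symmetrization is an orthogonal projection in $L^2((\R^3)^n)=\cH^{\otimes n}$ under Assumption C, you get $n!\Norm{\widetilde f_n(\cdot,0;t)}_{\cH^{\otimes n}}^2\le (t/(4\pi))^n$ and hence convergence of $\Norm{u(t,x)}_p$ for $t<4\pi/(\theta(p-1))=T_p'$ via hypercontractivity. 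This reproduces exactly the paper's bound, which is derived instead through the mollification $G_\varepsilon$, Lemma \ref{L:NonDecr}, and the evaluation $C_{\mathrm{Leb}}'=\pi^2$; your version is more elementary and arguably more transparent, at the cost of being special to the white-noise $d=3$ case where the kernel can be integrated in closed form.
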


We need some preparations before proving this theorem. Let us first introduce some notation. Recall equation \eqref{fn}.
For $d\le 2$, set
\begin{align}
	\nonumber
	L_n(y_1,\ldots,y_n)&=n!\int_0^{\infty} \e^{-t} \widetilde{f}_{n}(y_1,\ldots,y_n,0;t)dt \\
	\label{E:Ln}
	&=\sum_{\sigma \in \Sigma_n} \int_{0}^{\infty} \e^{-t} \int_{[0,t]_{<}^n} \prod_{k=1}^{n}G(s_k-s_{k-1},y_{\sigma(k)}-y_{\sigma(k-1)})d\bs dt.
\end{align}
If $d\leq 2$ and Assumption \ref{A:Main} holds, set for any $t>0$ and ${\bf x}=(x_1,\ldots,x_n)\in (\bR^d)^n$,
\begin{align}
	\nonumber
	H_n(t,{\bf x})&=n! \int_{(\bR^d)^n} \prod_{k=1}^{n}K(x_k-y_k) \widetilde{f}_n(y_1,\ldots,y_n,0;t) d{\bf y} \\
	\label{E:Hn}
	&=\sum_{\sigma \in \Sigma_n}\int_{[0,t]_{<}^n}\int_{(\bR^d)^n}\prod_{k=1}^{n}K(x_k-y_k) \prod_{k=1}^{n} G(s_{k}-s_{k-1},y_{\sigma(k)}-y_{\sigma(k-1)}) d{\bf y} d{\bf s},
\end{align}
where for the last line we used relation \eqref{fn} for expressing $\widetilde{f}_n(\cdot;0,t)$.
If Assumption \ref{A:White} holds, set for any $t>0$ and ${\bf x}=(x_1,\ldots,x_n)\in (\bR^d)^n$
\begin{align*}
  H_n(t,{\bf x})=n! \widetilde{f}_n(x_1,\ldots,x_n,0;t)=\sum_{\sigma \in \Sigma_n}\int_{[0,t]_{<}^n}  \prod_{k=1}^{n} G(s_{k}-s_{k-1},x_{\sigma(k)}-x_{\sigma(k-1)}) d{\bf s},
\end{align*}
with $s_0=0$, $x_{\sigma(0)}=0$.
Similarly, if $d=3$, for any $\varepsilon>0$, set
\begin{align}
	\label{E:Lne}
	&L_{n,\varepsilon}(y_1,\ldots,y_n)
	=\sum_{\sigma \in \Sigma_n} \int_{0}^{\infty} \e^{-t} \int_{[0,t]_{<}^n} \prod_{k=1}^{n}G_{\varepsilon}(s_k-s_{k-1},y_{\sigma(k)}-y_{\sigma(k-1)})d\bs dt,
\end{align}
where $G_{\varepsilon}(t,\cdot)=G(t,\cdot)*p_{\varepsilon}$ with $p_{\varepsilon}(x)=(2\pi\varepsilon)^{-d/2}\e^{-|x|^2/(2\varepsilon)}$.
Moreover, if $d\leq 3$, under Assumption \ref{A:Main} and \ref{A:Critical}, denote respectively
\begin{align} \begin{aligned}
	\label{E:Hne}
	&H_{n,\varepsilon}(t,{\bf x})=
	\sum_{\sigma \in
	\Sigma_n}\int_{[0,t]_{<}^n}\int_{(\bR^d)^n}\prod_{k=1}^{n}K(x_k-y_k)
	\prod_{k=1}^{n} G_{\varepsilon}(s_{k}-s_{k-1},y_{\sigma(k)}-y_{\sigma(k-1)}) d{\bf y} d{\bf s}, \\
	&H_{n,\varepsilon}(t,{\bf x})=
		\sum_{\sigma \in \Sigma_n}\int_{[0,t]_{<}^n} \prod_{k=1}^{n} G_{\varepsilon}(s_{k}-s_{k-1},x_{\sigma(k)}-x_{\sigma(k-1)}) d{\bf s}.\\
\end{aligned} \end{align}

\begin{remark}
	\label{R:Ge}
	A key observation is that the mollification $G_\varepsilon\left(t,x\right)$ is a nonnegative function if and only if $d\le 3$. When $d\le 2$,
	$G$ is a function and there is no need for this mollification.
  We only use this mollification in the case $d=3$. In this case, $G$ is a locally finite measure on $\R_+\times\R^3$ that is singular with
	respect to the Lebesgue measure.
	Because the nonnegativity property of $G_\varepsilon\left(t,x\right)$ is not valid for $d\ge 4$,
	the results presented in this section cover only the case $d\le 3$.
\end{remark}

If $d\leq 2$, denote
\begin{align*}
	L(x)       = \int_0^{\infty}\e^{-t}G(t,x)dt \quad \mbox{and} \quad
	\cF L(\xi) = \int_{0}^{\infty}\e^{-t}\cF G(t,\cdot)(\xi)dt
	           = \frac{1}{1+|\xi|^2},
\end{align*}
and if $d=3$, set
\begin{align*}
	L_\varepsilon(x)       = \int_0^{\infty}\e^{-t}G_\varepsilon(t,x)dt \quad \mbox{and} \quad
	\cF L_\varepsilon(\xi) = \int_{0}^{\infty}\e^{-t}\cF G_\varepsilon(t,\cdot)(\xi)dt
	                       = \frac{\e^{-\left(\varepsilon/2\right)|\xi|^2}}{1+|\xi|^2}.
\end{align*}
By change of variables, one sees that the Laplace transform of convolutions (in time) becomes product:
\begin{align*}
	L_n(y_1,\ldots,y_n)
	&= \sum_{\sigma \in S_n}L(y_{\sigma(1)}) L(y_{\sigma(2)}-y_{\sigma(1)})\ldots L(y_{\sigma(n)}-y_{\sigma(n-1)}),
	&\text{$d\le 2$,}\\
	L_{n,\varepsilon}(y_1,\ldots,y_n)
	&= \sum_{\sigma \in S_n}L_\varepsilon(y_{\sigma(1)}) L_\varepsilon(y_{\sigma(2)}-y_{\sigma(1)})\ldots L_\varepsilon(y_{\sigma(n)}-y_{\sigma(n-1)}),
	&\text{$d=3$.}
\end{align*}
Hence,
\begin{equation}\begin{aligned}
	\label{E:FLn}
	\cF L_n(\xi_1,\ldots,\xi_n)
	&= \sum_{\sigma \in \Sigma_n} \prod_{k=1}^{n} \frac{1}{1+|\sum_{j=k}^n \xi_{\sigma(j)}|^2}, & \text{$d\le 2$,}\\
	\cF L_{n,\varepsilon}(\xi_1,\ldots,\xi_n)
	&= \sum_{\sigma \in \Sigma_n} \e^{-\frac{\varepsilon}{2} \sum_{k=1}^{n}|\xi_{\sigma(k)}+\ldots+\xi_{\sigma(n)}|^2}\prod_{k=1}^{n} \frac{1}{1+|\sum_{j=k}^n \xi_{\sigma(j)}|^2}, & \text{$d=3$.}
\end{aligned}\end{equation}

\begin{lemma}
	\label{L:IntHn}
	If $d\leq 2$ and either parts (i) and (iii) of Assumption \ref{A:Main} hold, or Assumption \ref{A:White} holds, then
	\begin{align}
		\label{E:IntHn}
		\int_{(\bR^d)^n} \left[\int_{0}^{\infty}\e^{-t}H_n(t,\bx) dt\right]^2 d\bx
		& =\int_{(\bR^d)^n} |\cF L_n(\xi_1,\ldots,\xi_n)|^2\mu(d\xi_1)\ldots \mu(d\xi_n).
	\end{align}
	If $d=3$ and either parts (i) and (iii) of Assumption \ref{A:Main} hold, or Assumption \ref{A:Critical} holds, then
	relation \eqref{E:IntHn} holds with $H_n$ and $\cF L_n$ replaced by
	$H_{n,\varepsilon}$ and $\cF L_{n,\varepsilon}$, respectively.
\end{lemma}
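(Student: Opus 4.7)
The strategy is to recognize the bracketed quantity $\int_{0}^{\infty}e^{-t}H_n(t,\bx)\,dt$ as an $(\R^d)^n$-convolution of $L_n$ (or $L_{n,\e}$) with the product kernel $K^{\otimes n}(\bx):=\prod_{k=1}^n K(x_k)$, and then invoke Plancherel together with the identity
\begin{equation}
\label{E:Kmu}
(2\pi)^{-d}\,|\cF K(\xi)|^2 = \varphi(\xi),
\end{equation}
where $\varphi$ is the density of $\mu$. Relation \eqref{E:Kmu} follows from the decomposition $\gamma=K*K$ in Assumption \ref{A:Main}(iii) together with the fact that $\gamma$ is the Fourier transform of $\mu$: applying $\cF$ to both sides of $\gamma=K*K$ gives $(\cF K)^2$ on the one hand and $(2\pi)^d\varphi$ on the other.

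\smallskip

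First I treat the case $d\le 2$ under Assumption \ref{A:Main}. Since $K\ge 0$ and $G(t,\cdot)\ge 0$, Tonelli's theorem applied to the definition \eqref{E:Hn} gives
\begin{equation*}
\int_{0}^{\infty} e^{-t} H_n(t,\bx)\,dt
= \int_{(\R^d)^n}\prod_{k=1}^{n}K(x_k-y_k)\,L_n(\by)\,d\by
= \bigl(K^{\otimes n}*L_n\bigr)(\bx),
\end{equation*}
with $L_n$ from \eqref{E:Ln}. Because $L_n\in L^2((\R^d)^n)$ (its Fourier transform, given explicitly in \eqref{E:FLn}, is square-integrable against the reference measure coming from \eqref{E:Kmu}), Plancherel on $\R^{dn}$ together with the convolution theorem yields
\begin{equation*}
\int_{(\R^d)^n}\Bigl[\int_{0}^{\infty}e^{-t}H_n(t,\bx)\,dt\Bigr]^2 d\bx
= (2\pi)^{-dn}\int_{(\R^d)^n}\prod_{k=1}^n|\cF K(\xi_k)|^2\,|\cF L_n(\boldsymbol{\xi})|^2\,d\boldsymbol{\xi},
\end{equation*}
and \eqref{E:Kmu} converts $(2\pi)^{-d}|\cF K(\xi_k)|^2\,d\xi_k$ into $\mu(d\xi_k)$, giving \eqref{E:IntHn}.

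\smallskip

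For the remaining cases the same outline applies with minor bookkeeping. Under Assumption \ref{A:White} (white noise, $d\le 2$) there is no factor of $K$: the identity $\int_0^{\infty}e^{-t}H_n(t,\bx)\,dt=L_n(\bx)$ is immediate from the definition, and Plancherel gives $\|L_n\|_{L^2}^2=(2\pi)^{-dn}\int|\cF L_n|^2\,d\boldsymbol{\xi}$, which coincides with the right-hand side of \eqref{E:IntHn} because $\mu(d\xi)=(2\pi)^{-d}d\xi$. The case $d=3$ is identical once $G$ is replaced by its mollification $G_\e=G*p_\e$, which is a \emph{nonnegative} function (see Remark \ref{R:Ge}); this is exactly what allows the Tonelli step and the Plancherel step to proceed verbatim, with $H_{n,\e}$ and $L_{n,\e}$ from \eqref{E:Hne}--\eqref{E:Lne} replacing $H_n$ and $L_n$.

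\smallskip

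The only real point to watch is the justification of the interchanges of integration and of the convolution/Plancherel identity at the distributional level. This is precisely where the nonnegativity of $K$, $G$, and $G_\e$ (and the explicit boundedness of the $\cF L_n$, $\cF L_{n,\e}$ coming from the Gaussian damping and $(1+|\cdot|^2)^{-1}$ factors in \eqref{E:FLn}) is used: Tonelli handles the swap of $\int dt$ and $\int d\by$, and $L_n,L_{n,\e}\in L^1\cap L^2$ makes Plancherel applicable without any distributional subtleties. I expect this to be the main technical point to flag rather than a serious obstacle.
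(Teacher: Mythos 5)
Your proposal is correct and follows essentially the same route as the paper: Fubini/Tonelli identifies $\int_0^\infty \e^{-t}H_n(t,\bx)\,dt$ with $(K^{\otimes n}*L_n)(\bx)$, and the Plancherel theorem together with $\gamma=K*K$ (equivalently, your identity $(2\pi)^{-d}|\cF K|^2=\varphi$) converts the squared $L^2(d\bx)$-norm into the $\mu^{\otimes n}$-integral of $|\cF L_n|^2$, with the white-noise and mollified $d=3$ cases handled verbatim. Your explicit attention to the nonnegativity of $K$, $G$, $G_\varepsilon$ and to the integrability needed for Plancherel only makes precise what the paper leaves implicit.
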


\begin{proof}
	It suffices to prove the result in the case when $d\leq 2$ and parts (i) and (iii) of Assumption \ref{A:Main} hold. The white noise case is proved in a similar way.
	Notice that by Fubini's theorem,
	\begin{align*}
		\int_{0}^{\infty}&\e^{-t}H_n(t,\bx) dt\\
		&=\sum_{\sigma \in \Sigma_n} \int_0^{\infty}\e^{-t} \int_{[0,t]_{<}^n} \int_{\bR^{nd}} \prod_{k=1}^{n}K(x_k-y_k) \prod_{k=1}^{n}G(s_k-s_{k-1},y_{\sigma(k)}-y_{\sigma(k-1)})d{\bf y} d{\bf s} d t\\
		&=\int_{(\bR^d)^n} \prod_{k=1}^n K(x_k-y_k) L_n(y_1,\ldots,y_n)d{\bf y}.
	\end{align*}
	Hence,
	\begin{align*}
		\int_{(\bR^d)^n} \left[\int_{0}^{\infty}\e^{-t}H_n(t,\bx) dt\right]^2 d\bx
		&=\int_{(\bR^d)^n} \left[ \int_{(\bR^d)^n} \prod_{k=1}^{n}K(x_k-y_k) L_{n}(y_1,\ldots,y_n)d\by\right]^2 d\bx \\
		&=\int_{(\bR^d)^n}|\cF L_n(\xi_1,\ldots,\xi_n)|^2 \mu(d\xi_1)\ldots \mu(d\xi_n),
	\end{align*}
	where the last step is due to the Plancherel theorem and the fact that $K*K=\gamma$.
\end{proof}

Next we prove the key expression \eqref{E:keyNorm} which was announced in the introduction.

\begin{lemma}
	If $d\leq 2$ and either parts (i) and (iii) of Assumption \ref{A:Main} hold or Assumption \ref{A:White} holds, then
	\begin{equation}
		\label{norm-fn-X}
		\Norm{\widetilde{f}_n(\cdot,0;t)}_{\cH^{\otimes n}}^{2}=\frac{1}{(n!)^2} \int_{(\bR^d)^n}H_n^2(t,{\bf x}) d{\bf x}.
	\end{equation}
\end{lemma}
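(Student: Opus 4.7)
The plan is to identify the $\cH^{\otimes n}$-norm with a weighted $L^2$-norm on the Fourier side, and then to recognize $H_n$ as the $L^2$-representative of $\widetilde f_n$ under an isometric embedding built from $K$. Under Assumption \ref{A:Main}(i), $\mu$ has a density $\varphi$, and the factorization $\gamma=K*K$ from part (iii) translates on the Fourier side to $\varphi = |\cF K|^2$ (as noted in the remark just after the assumption, since $K=\cF^{-1}\sqrt{\varphi}$). Thus for any symmetric $f\in\cH^{\otimes n}$,
\[
\Norm{f}_{\cH^{\otimes n}}^{2}
= \int_{(\bR^d)^n} |\cF f(\xi_1,\dots,\xi_n)|^2 \prod_{k=1}^{n}|\cF K(\xi_k)|^2\, d\xi_1\cdots d\xi_n .
\]

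The key observation is that $\prod_{k=1}^{n}\cF K(\xi_k)\cdot\cF f(\bxi)$ is the Fourier transform of the function $\bx\mapsto (K^{\otimes n}*f)(\bx)$ on $\bR^{nd}$, where $K^{\otimes n}$ denotes the tensor product $K(x_1)\cdots K(x_n)$ and the convolution is taken in each coordinate separately. Consequently, Plancherel's theorem yields the isometry
\[
\Norm{f}_{\cH^{\otimes n}}^{2}
= \int_{(\bR^d)^n} \Bigl[\int_{(\bR^d)^n}\prod_{k=1}^{n}K(x_k-y_k)\, f(y_1,\dots,y_n)\, d\by\Bigr]^2 d\bx .
\]
Applying this with $f=\widetilde{f}_n(\cdot,0;t)$ and comparing with the defining formula \eqref{E:Hn}, one sees that the bracketed expression equals $H_n(t,\bx)/n!$. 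Dividing by $(n!)^2$ then yields \eqref{norm-fn-X}.

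For the white noise case (Assumption \ref{A:White}), the argument is even simpler: $\mu(d\xi)=(2\pi)^{-d}d\xi$, so $\cH^{\otimes n}$ coincides isometrically with $L^2(\bR^{nd})$ via Plancherel, and by definition $H_n(t,\bx)=n!\,\widetilde f_n(\bx,0;t)$, giving \eqref{norm-fn-X} immediately.

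The only mildly technical step is justifying that the Fubini/Plancherel manipulation identifying $(\prod_k \cF K(\xi_k))\cF f(\bxi)$ with $\cF(K^{\otimes n}*f)(\bxi)$ is legitimate; this is not really an obstacle because in the regime $d\le 2$ the kernel $\widetilde f_n(\cdot,0;t)$ is a bounded, compactly supported function (by the explicit expression \eqref{fn} and the fact that $G$ is a bounded function with compact support in $x$ for each fixed $t$), while $K$ is a nonnegative tempered measure with density, so the convolution $K^{\otimes n}*\widetilde f_n(\cdot,0;t)$ is well defined and the Fourier identity holds in the sense of tempered distributions, with both sides yielding bona fide $L^2$ functions thanks to Assumption \ref{A:Main}(i)+(iii). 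All remaining computations are mechanical.
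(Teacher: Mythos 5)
Your argument is correct and is essentially the paper's own proof transported to the Fourier side: both rest on the factorization $\gamma=K*K$, which the paper exploits directly in physical space by writing $\gamma(y_k-y_k')=\int_{\bR^d}K(y_k-x_k)K(y_k'-x_k)\,dx_k$ and applying Fubini, whereas you verify the same isometry $f\mapsto K^{\otimes n}*f$ from $\cH^{\otimes n}$ into $L^2((\bR^d)^n)$ via Plancherel. One bookkeeping caveat: under the paper's conventions ($\gamma=\cF\mu$, hence $\cF\gamma=(2\pi)^d\varphi$ for symmetric $\mu$, so $\varphi=(2\pi)^{-d}|\cF K|^2$ rather than $|\cF K|^2$), your first display is missing a factor $(2\pi)^{-nd}$ and your Plancherel step the compensating factor $(2\pi)^{nd}$; these cancel, so the conclusion is unaffected.
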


\begin{proof}
	We treat only the case of Assumption \ref{A:Main}. The white noise case can be proved in a similar way.
	Writing $\gamma(y_k-y_k')=\int_{\bR^d}K(y_k-x_k)K(y_k'-x_k)dx_k$, we see that
	\begin{align*}
		\Norm{\widetilde{f}_n(\cdot,0;t)}_{\cH^{\otimes n}}^2
		&=\int_{(\bR^{d})^n} \int_{(\bR^d)^n}\prod_{k=1}^{n}\gamma(y_k-y_k') \widetilde{f}_n(y_1,\ldots,y_n,0;t)\widetilde{f}_n(y_1',\ldots,y_n',0;t)d{\bf y} d{\bf y}' \\
		&= \int_{(\bR^d)^n} \left[ \int_{(\bR^d)^n} \prod_{k=1}^{n}K(y_k-x_k) \widetilde{f}_n(y_1,\ldots,y_n,0;t)d{\bf y} \right]^2 d{\bf x}\\
		&=\int_{(\bR^d)^n} \left[ \frac{1}{n!} H_{n}(t,{\bf x})\right]^2 d{\bf x}.
	\end{align*}
\end{proof}

If $f$ is a non-decreasing function on $[0,\infty)$, then the following
{\it reverse Cauchy-Schwarz inequality} holds, which plays a key role in our developments.

\begin{lemma}
	\label{L:NonDecr}
	If $f: [0,\infty)\to [0,\infty)$ is a non-decreasing function, then
	\begin{align*}
		2 \int_{0}^{\infty} \e^{-2t}f^2(t)dt\leq \left( \int_0^{\infty}\e^{-t}f(t)dt\right)^2.
	\end{align*}
\end{lemma}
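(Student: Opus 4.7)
The plan is to expand the square on the right-hand side as a double integral and then exploit monotonicity pointwise on the ``upper triangle.'' Specifically, I would write
\begin{align*}
\left(\int_0^\infty e^{-t} f(t)\, dt\right)^{2}
= \int_0^\infty\!\!\int_0^\infty e^{-s-t} f(s) f(t)\, ds\, dt
= 2 \iint_{0\le s \le t} e^{-s-t} f(s) f(t)\, ds\, dt,
\end{align*}
where the last equality uses Fubini together with the symmetry $(s,t) \leftrightarrow (t,s)$ of the integrand.

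Next I would observe that for $0 \le s \le t$, the monotonicity of $f$ together with $f \ge 0$ gives $f(s)\bigl(f(t)-f(s)\bigr) \ge 0$, i.e.\ $f(s) f(t) \ge f(s)^{2}$. Substituting this pointwise lower bound into the integral over the upper triangle yields
\begin{align*}
2 \iint_{0\le s \le t} e^{-s-t} f(s) f(t)\, ds\, dt
\ge 2 \int_0^\infty e^{-s} f(s)^{2} \left(\int_s^\infty e^{-t}\, dt\right) ds
= 2 \int_0^\infty e^{-2s} f(s)^{2}\, ds,
\end{align*}
which is precisely the desired inequality.

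There is really no obstacle: the whole argument rests on the single pointwise estimate $f(s)f(t) \ge f(s)^2$ for $s \le t$, and the rest is Fubini plus evaluation of a one-dimensional exponential integral. (An equivalent probabilistic rephrasing, which may be worth a parenthetical remark, is that if $T_1, T_2$ are i.i.d.\ $\mathrm{Exp}(1)$ and $M = \min(T_1,T_2) \sim \mathrm{Exp}(2)$, then the right-hand side equals $\mathbb{E}[f(T_1) f(T_2)] = \mathbb{E}[f(M) f(\max(T_1,T_2))] \ge \mathbb{E}[f(M)^{2}]$, which is the left-hand side.) Equality holds when $f$ is constant, as one sees by tracking the inequality $f(s)f(t) \ge f(s)^2$.
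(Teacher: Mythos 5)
Your proof is correct and is essentially the paper's argument: the paper proves this by introducing i.i.d.\ $\mathrm{Exp}(1)$ variables $\tau,\tau'$ and using $f(\tau)f(\tau')\ge f^2(\tau\wedge\tau')$ together with $\tau\wedge\tau'\sim\mathrm{Exp}(2)$, which is exactly the probabilistic rephrasing you give in your parenthetical; your double-integral version is the same pointwise estimate $f(s)f(t)\ge f(\min(s,t))^2$ written out with Tonelli instead of expectations.
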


\begin{proof}
	Let $\tau$ and $\tau'$ be two independent random variables with exponential distribution of mean $1$.
	Then $\tau \wedge \tau'$ has an exponential distribution with density $g(t)=2 \e^{-2t}$ and
	\begin{align*}
		\left(\int_0^{\infty} \e^{-t} f(t) dt \right)^2
		=\E[f(\tau)]E[f(\tau')]
		=\E[f(\tau) f(\tau')]
		\geq E[f^2(\tau \wedge \tau')]
		=2\int_0^{\infty}\e^{-2t}f^2(t)dt.
	\end{align*}
\end{proof}

Denote
\begin{equation}
	\label{E:def-Tn}
	T_n:=\int_{(\bR^d)^n} \left[\sum_{\sigma \in \Sigma_n} \prod_{k=1}^{n} \frac{1}{1+|\sum_{j=k}^n \xi_{\sigma(j)}|^2} \right]^2  \mu(d\xi_1)\ldots \mu(d\xi_n).
\end{equation}

\begin{lemma}
	\label{L:Tn}
        Under the assumptions of Theorem \ref{T:Main}
        or Theorem \ref{T:Critical}, we have that
	\begin{align}
		\label{E:alpha<4}
		C_{\mu}':=\int_{\bR^d}\left(\frac{1}{1+|\xi|^2} \right)^2 \mu(d\xi)<\infty
		\quad\text{and}\quad
		T_n \leq (n!)^2 (C_{\mu}')^n.
	\end{align}
\end{lemma}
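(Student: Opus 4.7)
The two claims are handled in turn. For $C_\mu'<\infty$: under Assumption~\ref{A:Main}, the scaling $\varphi(c\xi)=c^{-(d-\alpha)}\varphi(\xi)$ forces $\varphi(r\omega)=r^{-(d-\alpha)}h(\omega)$ in polar coordinates, for some nonnegative $h$ on $S^{d-1}$, so that
$$C_\mu'=\left(\int_{S^{d-1}}h(\omega)\,d\omega\right)\int_0^\infty \frac{r^{\alpha-1}}{(1+r^2)^2}\,dr,$$
with the radial integral finite since $0<\alpha<4$ (we have $\alpha<d\le 3$). Under Assumption~\ref{A:White} or~\ref{A:Critical}, $\mu$ is a constant multiple of Lebesgue measure and $C_\mu'<\infty$ because $d\le 3<4$.

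\textbf{Reduction to $J_n$.} For the bound on $T_n$, apply the Cauchy--Schwarz inequality to the $n!$-term inner sum in \eqref{E:def-Tn}; since the product measure $\mu^{\otimes n}$ is invariant under permutations of its coordinates, each of the resulting $n!$ integrals coincides (after a relabeling of the variables) with the one corresponding to $\sigma=\mathrm{id}$. Hence
$$T_n \le (n!)^2 J_n,\qquad J_n:=\int_{(\bR^d)^n}\prod_{k=1}^n\frac{1}{(1+|\xi_k+\cdots+\xi_n|^2)^2}\,\mu(d\xi_1)\cdots\mu(d\xi_n).$$

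\textbf{Induction and the shift estimate.} In $J_n$ the variable $\xi_1$ appears only in the $k=1$ factor, as $\xi_1+c$ with $c:=\xi_2+\cdots+\xi_n$. Integrating over $\xi_1$ first and applying the uniform shift estimate
\begin{equation}
\label{E:plan-shift}
\sup_{c\in\bR^d}\int_{\bR^d}\frac{\mu(d\xi)}{(1+|\xi+c|^2)^2}\le C_\mu',
\end{equation}
one obtains $J_n\le C_\mu'\cdot J_{n-1}$, and induction on $n$ then yields $J_n\le (C_\mu')^n$, proving the desired bound on $T_n$. To establish \eqref{E:plan-shift}, I would write $(1+|\xi|^2)^{-2}=\int_0^\infty u\,e^{-u}e^{-u|\xi|^2}\,du$ and reduce to the Gaussian estimate $\int e^{-u|\xi+c|^2}\,\mu(d\xi)\le\int e^{-u|\xi|^2}\,\mu(d\xi)$ for each $u>0$. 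Under Assumption~\ref{A:White} or~\ref{A:Critical} this is an equality by translation invariance of $\mu$; under Assumption~\ref{A:Main} it follows from the rearrangement inequality applied coordinate-block-wise, since for every admissible example (Riesz, fractional sheet, hybrid) the spectral density $\varphi$ factors as a product of symmetric decreasing densities on the blocks.

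\textbf{Main obstacle.} The principal obstacle is justifying \eqref{E:plan-shift} uniformly in $c\in\bR^d$ under Assumption~\ref{A:Main}, since $\varphi$ is in principle only constrained by the scaling relation \eqref{E:scaleMu} and need not be symmetric decreasing. The heat-kernel decomposition of the Bessel weight $(1+|\xi|^2)^{-2}$ into rotationally symmetric Gaussians, together with the block-wise symmetric-decreasing structure exhibited by the paper's examples, is the device that allows the rearrangement inequality to yield a shift-uniform bound.
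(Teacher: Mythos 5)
Your overall strategy coincides with the paper's: polar coordinates plus the scaling relation for the finiteness of $C_\mu'$, then Cauchy--Schwarz on the sum over permutations, permutation-invariance of $\mu^{\otimes n}$, and an iterated shift-uniform bound for $T_n$. The genuine gap is in your justification of the shift estimate
\begin{equation*}
\sup_{c\in\bR^d}\int_{\bR^d}\frac{\mu(d\xi)}{(1+|\xi+c|^2)^2}\le C_\mu'
\end{equation*}
under Assumption \ref{A:Main}. You reduce it, via the subordination $(1+|\xi|^2)^{-2}=\int_0^\infty u\,\e^{-u}\e^{-u|\xi|^2}du$, to the Gaussian bound $\int \e^{-u|\xi+c|^2}\mu(d\xi)\le\int \e^{-u|\xi|^2}\mu(d\xi)$, and then invoke the rearrangement inequality coordinate-block-wise on the grounds that the spectral density factors into symmetric decreasing pieces ``for every admissible example.'' But the lemma is asserted under Assumption \ref{A:Main} itself, which imposes no product or symmetric-decreasing structure on $\varphi$ --- only the scaling relation and the decomposition $\gamma=K*K$ with $K\ge 0$. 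Your argument therefore establishes the estimate only for Examples \ref{E:Riesz}--\ref{E:Hybrid}, not in the stated generality, and you yourself flag this as the unresolved obstacle. The estimate does hold in general, and your Gaussian decomposition is in fact the right setup: by Parseval, $\int \e^{-u|\xi+c|^2}\mu(d\xi)$ equals (up to a positive constant) $\int \e^{-ic\cdot x}q_u(x)\gamma(x)\,dx$, where $q_u\ge 0$ is the Gaussian inverse Fourier transform of $\e^{-u|\cdot|^2}$; since $\gamma=K*K\ge 0$ by part (iii) of Assumption \ref{A:Main}, taking absolute values bounds this by its value at $c=0$. It is this positivity-of-$\gamma$ (nonnegative-definiteness of $\mu$) argument --- not any rearrangement structure --- that the paper appeals to by citing Lemma 4.1 of Balan and Song or Lemma 3.1 of Chen at the corresponding step.

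A secondary omission: after writing $C_\mu'=\bigl(\int_{S^{d-1}}h(\omega)\,d\omega\bigr)\int_0^\infty r^{\alpha-1}(1+r^2)^{-2}dr$, you only verify that the radial factor is finite. You must also argue that $\int_{S^{d-1}}h\,d\omega<\infty$; the paper obtains this from the temperedness of $\mu$, via $\mu(B_1)=\alpha^{-1}\int_{S^{d-1}}h\,d\omega<\infty$.
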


\begin{proof}
  Recall that $\varphi$ is the density function of $\mu$.
	By the scaling property of $\varphi$ in \eqref{E:scaleMu},
	$$
	\int_{\bR^d}\left(\frac{1}{1+|\xi|^2} \right)^2 \mu(d\xi)=
	\int_{\R^d}\bigg({1\over 1+\vert\xi\vert^2}\bigg)^2\varphi(\xi)d\xi
	=\int_{\R^d}\bigg({1\over 1+\vert\xi\vert^2}\bigg)^2\vert\xi\vert^{-(d-\alpha)}
	\varphi\Big({\xi\over\vert\xi\vert}\Big)d\xi.
	$$
	By the spherical substitution,
	\begin{align*}
		\int_{\R^d}\bigg({1\over 1+\vert\xi\vert^2}\bigg)^2\vert\xi\vert^{-(d-\alpha)} \varphi\Big({\xi\over\vert\xi\vert}\Big)d\xi
		&=\bigg(\int_{S_1}\varphi(\xi)\sigma(d\xi)\bigg)\int_0^\infty\big(1+\rho^2\big)^{-2}\rho^{-(d-\alpha)} \rho^{d-1}d\rho\\
		&=\bigg(\int_{S_1}\varphi(\xi)\sigma(d\xi)\bigg) \int_0^\infty\big(1+\rho^2\big)^{-2}\rho^{\alpha -1}d\rho,
	\end{align*}
	where $S_1$ is the unit sphere in $\R^d$ and $\sigma (d\xi)$ is the uniform sphere measure.
	On the other hand, let $B_1$ be the unit ball in $\R^d$.
	The fact that $\mu$ is tempered implies that
	$\mu(B_1)<\infty$. In addition,
	$$
	\mu(B_1)=\int_{B_1}\varphi(\xi)d\xi=\bigg(\int_{S_1}\varphi(\xi)\sigma(d\xi)
	\bigg)\int_0^1\rho^{-(d-\alpha)}\rho^{d-1}d\rho
	=\alpha^{-1}\int_{S_1}\varphi(\xi)\sigma(d\xi).
	$$
	Hence,
	\begin{align}
		\label{int}
		\int_{\bR^d}\left(\frac{1}{1+|\xi|^2} \right)^2 \mu(d\xi)=\alpha\mu(B_1)
		\int_0^\infty\big(1+\rho^2\big)^{-2}\rho^{\alpha -1}d\rho<\infty,
	\end{align}
	as $\alpha<4$. Therefore, the first relation in \eqref{E:alpha<4} holds as $\alpha<4$.
	\medskip

	As for the second relation in \eqref{E:alpha<4}, we have the following:
	\begin{align}
		T_n & \leq n! \int_{(\bR^d)^n} \sum_{\sigma \in \Sigma_n}
		\prod_{k=1}^{n} \left(\frac{1}{1+|\sum_{j=k}^n \xi_{\sigma(j)}|^2}\right)^2
		\mu(d\xi_1)\ldots \mu(d\xi_n) \notag \\
		& = (n!)^2 \int_{(\bR^d)^n}
		\prod_{k=1}^{n} \left(\frac{1}{1+|\sum_{j=k}^n \xi_{j}|^2}\right)^2
		\mu(d\xi_1)\ldots \mu(d\xi_n) \notag \\
		& \leq (n!)^2 \left[\sup_{\eta \in \bR^d} \int_{\bR^d}\left(\frac{1}{1+|\xi+\eta|^2}\right)^2
		\mu(d\xi)\right]^n \notag \\
                    & = (n!)^2 \left[ \int_{\bR^d}\left(\frac{1}{1+|\xi|^2} \right)^2 \mu(d\xi)\right]^n,
		\label{E_:alpha<4}
	\end{align}
	where the last line follows by Lemma 4.1 of Balan and Song \cite{balan-song17} or Lemma 3.1 of Chen \cite{xchen19} when Assumption A holds, and holds trivially when either
	Assumption B or C holds.
\end{proof}

In the case when Assumption \ref{A:Main} holds, by the scaling property \eqref{E:scaleFG} of $\cF G$,
	\begin{equation}
		\label{E:scaleTildeFn}
		\Norm{\widetilde{f}_n(\cdot,0;t)}_{\cH^{\otimes n}}^2 =t^{(4-\alpha)n}\Norm{\widetilde{f}_n(\cdot,0;1)}_{\cH^{\otimes n}}^2.
	\end{equation}

\begin{lemma}
	\label{L:LpalaceNorm}
	If Assumption \ref{A:Main} holds with $0<\alpha<d\le 3$, then
	\begin{equation}
		\label{E:LapaceNorm}
		\int_ 0^{\infty} \e^{-t}  \Norm{\widetilde{f}_n(\cdot,0;t)}_{\cH^{\otimes n}}^2 dt
		\leq 2^{(4-\alpha)n} \frac{1}{(n!)^2} T_n
		\leq \left(2^{4-\alpha} C_{\mu}'\right)^n
		<\infty,
	\end{equation}
	and consequently, $f_n(\cdot,0;t) \in \cH^{\otimes n}$ for any $t>0$.
	Relation \eqref{E:LapaceNorm} also holds under either Assumption \ref{A:White} or \ref{A:Critical} with $\alpha$ replaced by $d$.
\end{lemma}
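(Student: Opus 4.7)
My plan is to combine the norm representation \eqref{norm-fn-X}, the reverse Cauchy-Schwarz bound (Lemma \ref{L:NonDecr}), the Laplace-transform identity (Lemma \ref{L:IntHn}), and the wave-kernel scaling \eqref{E:scaleFG} together with the $\mu$-scaling \eqref{E:scaleMu}. The reverse Cauchy-Schwarz step naturally produces an $e^{-2t}$-weighted bound, and a scaling argument then converts this into the desired $e^{-t}$-weighted estimate, manufacturing the dilation factor $2^{(4-\alpha)n}$.

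First, consider $d\le 2$ under Assumption \ref{A:Main}, where $G$ is a bona fide function and $H_n(t,\bx)$ is well defined. For each fixed $\bx\in(\R^d)^n$, the map $t\mapsto H_n(t,\bx)$ is nonnegative (since $K\ge 0$ by part~(iii) of Assumption \ref{A:Main} and $G\ge 0$) and non-decreasing in $t$ (the simplex $[0,t]_<^n$ grows with $t$). Applying Lemma \ref{L:NonDecr} pointwise in $\bx$ and then integrating in $\bx$ yields
\begin{align*}
2\int_0^\infty e^{-2t}\int_{(\R^d)^n} H_n^2(t,\bx)\,d\bx\, dt
&\le \int_{(\R^d)^n}\!\left(\int_0^\infty e^{-t}H_n(t,\bx)\,dt\right)^{\!\!2} d\bx \;=\; T_n,
\end{align*}
where the last equality combines Lemma \ref{L:IntHn} with definition \eqref{E:def-Tn}. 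Together with \eqref{norm-fn-X}, this gives the $e^{-2t}$-weighted bound
\[
\int_0^\infty e^{-2t}\,\Norm{\widetilde f_n(\cdot,0;t)}_{\cH^{\otimes n}}^{2}\, dt\;\le\; \frac{T_n}{2(n!)^2}.
\]

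To pass from $e^{-2t}$ to $e^{-t}$, I will establish the scaling identity
\[
\Norm{\widetilde f_n(\cdot,0;ct)}_{\cH^{\otimes n}}^{2} \;=\; c^{(4-\alpha)n}\,\Norm{\widetilde f_n(\cdot,0;t)}_{\cH^{\otimes n}}^{2},\qquad c>0,
\]
via a short Fourier computation: \eqref{E:Ffn} combined with \eqref{E:scaleFG} gives the pointwise identity $\cF f_n(\cdot,0;ct)(\xi_1,\ldots,\xi_n)=c^{2n}\,\cF f_n(\cdot,0;t)(c\xi_1,\ldots,c\xi_n)$, and the change of variables $\eta_i=c\xi_i$ in $\mu^{\otimes n}$ together with $\mu(cA)=c^\alpha\mu(A)$ from \eqref{E:scaleMu} produces the overall factor $c^{4n-\alpha n}$. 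Substituting $t\mapsto s/2$ in the displayed $e^{-2t}$-bound and invoking this scaling immediately delivers the first inequality of \eqref{E:LapaceNorm}, while the second inequality is precisely \eqref{E:alpha<4} from Lemma \ref{L:Tn}. Moreover, the scaling identity shows $\Norm{\widetilde f_n(\cdot,0;t)}^2=t^{(4-\alpha)n}\Norm{\widetilde f_n(\cdot,0;1)}^2$; if the common value were infinite, the now-finite Laplace integral would be $+\infty$, a contradiction, so $f_n(\cdot,0;t)\in\cH^{\otimes n}$ for every $t>0$.

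The white-noise case under Assumption \ref{A:White} ($d\le 2$) runs verbatim once $\alpha$ is replaced by $d$, the $\mu$-scaling now reading $\mu(cA)=c^d\mu(A)$. The genuinely delicate case is $d=3$ under Assumption \ref{A:Main} (or Assumption \ref{A:Critical}), where $G$ is a measure and $H_n$ is not literally defined. The remedy, flagged in Remark \ref{R:Ge}, is to carry out the entire above argument with the mollified kernel $G_\varepsilon$ and the mollified $H_{n,\varepsilon}$ of \eqref{E:Hne}, which are genuine nonnegative functions precisely because $d\le 3$. The extra Gaussian factor $\e^{-\varepsilon|\cdot|^2/2}\le 1$ appearing in $\cF L_{n,\varepsilon}$ (see \eqref{E:FLn}) gives $T_{n,\varepsilon}\le T_n$ uniformly in $\varepsilon$, while pointwise convergence $\cF f_{n,\varepsilon}\to \cF f_n$ combined with Fatou's lemma on $\int |\cF\widetilde f_{n,\varepsilon}|^2\, d\mu^{\otimes n}$ transfers the $\varepsilon$-bound to the unmollified norm. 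The $\varepsilon\to 0$ limit in $d=3$ is the main technical obstacle; once secured, the rest is a routine combination of the reverse Cauchy-Schwarz inequality with the wave-kernel scaling.
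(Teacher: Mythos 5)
Your proposal is correct and follows essentially the same route as the paper: the reverse Cauchy--Schwarz inequality applied pointwise in $\bx$ to the non-decreasing map $t\mapsto H_n(t,\bx)$, combined with the Laplace-transform identity of Lemma \ref{L:IntHn} and the scaling $\Norm{\widetilde f_n(\cdot,0;ct)}_{\cH^{\otimes n}}^2=c^{(4-\alpha)n}\Norm{\widetilde f_n(\cdot,0;t)}_{\cH^{\otimes n}}^2$ to manufacture the factor $2^{(4-\alpha)n}$, and for $d=3$ the mollification $G_\varepsilon$ with the Gaussian factor bounded by $1$ and Fatou's lemma to pass to the limit. The only cosmetic difference is the order of operations (you bound the $\e^{-2t}$-weighted integral first and then rescale, while the paper rescales first), which changes nothing.
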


\begin{proof}
	We first assume that Assumption \ref{A:Main} holds.
	Note that \eqref{E:LapaceNorm} implies that $\Norm{f_n(\cdot,0;t)}_{\cH^{\otimes n}}<\infty$ for almost all $t>0$.
	Due to \eqref{E:scaleTildeFn}, this has to hold for all $t>0$. Hence, $f_n(\cdot,0;t) \in \cH^{\otimes n}$ for all $t>0$.
	It remains to prove the first inequality in \eqref{E:LapaceNorm}. For this, we treat separately the cases $d\leq 2$ and $d=3$.
	\bigskip

	{\noindent\em Case 1. Assume that $d\leq 2$.}
	By the scaling property of $\cF G$ in \eqref{E:scaleFG}, we see that
	\begin{equation*}
		\cF \widetilde{f}_n(\cdot,0;ct)(\xi_1,\ldots,\xi_n)
		=c^{2n}\cF \widetilde{f}_n(\cdot,0;t)(c\xi_1,\ldots,c\xi_n), \quad \text{for all $c>0$}.
	\end{equation*}
	Using the above scaling property and the scaling property of $\mu$ (see \eqref{E:scaleMu}), we have:
	\begin{align*}
		\int_ 0^{\infty}\e^{-t}\Norm{\widetilde{f}_n(\cdot,0;t)}_{\cH^{\otimes n}}^2 dt
		&=\int_0^{\infty}\e^{-t} \int_{(\bR^d)^n} |\cF \widetilde{f}_n(\cdot,0;t)(\xi_1,\ldots,\xi_n)|^2  \mu(d\xi_1)\ldots \mu(d\xi_n)dt \\
		&=2^{(4-\alpha)n}  \int_0^{\infty}2 \e^{-2t}\Norm{\widetilde{f}_n(\cdot,0;t)}_{\cH^{\otimes n}}^2 dt \\
		&=2^{(4-\alpha)n}  \frac{1}{(n!)^2} \int_{(\bR^d)^n}\left(\int_0^{\infty}2 \e^{-2t}H_n^2(t,{\bf x})dt\right) d {\bf x}.
	\end{align*}
	where for the last line, we used \eqref{norm-fn-X}.
	Since the function $t \mapsto H_n(t,{\bf x})$ is non-negative and non-decreasing, we can apply Lemma
	\ref{L:NonDecr} to see that
	\begin{align*}
		\int_ 0^{\infty}\e^{-t}\Norm{\widetilde{f}_n(\cdot,0;t)}_{\cH^{\otimes n}}^2 dt
		&\leq 2^{(4-\alpha)n} \frac{1}{(n!)^2} \int_{(\bR^d)^n} \left(\int_0^{\infty}\e^{-t}H_n(t,{\bf x})dt \right)^2 d{\bf x} \\
		&= 2^{(4-\alpha)n} \frac{1}{(n!)^2} \int_{(\bR^d)^n} |\cF L_n(\xi_1,\ldots,\xi_n)|^2 \mu(d\xi_1)\ldots \mu(d\xi_n) \\
		&= 2^{(4-\alpha)n} \frac{1}{(n!)^2} T_n
	\end{align*}
	where for the second last line we used Lemma \ref{L:IntHn} and for the last line we used the form of $\cF L_n(\xi_1,\ldots,\xi_n)$ given by \eqref{E:FLn}.

	\bigskip

	{\noindent\em Case 2. Assume that $d=3$.}
	Consider the kernel
	\begin{equation}
		\label{def-gn}
		f_{n,\varepsilon}(y_1,\ldots,y_n,x;t)
		=\int_{[0,t]_{<}^n} \prod_{k=1}^{n}G_{\varepsilon}(t_{k+1}-t_k,y_{k+1}-y_k)d{\bf t},
	\end{equation}
	with $t_{n+1}=t$ and $y_{n+1}=x$. Then
	\begin{equation}
		\label{F-g-e}
		\cF f_{n,\varepsilon}(\cdot,x;t)(\xi_1,\ldots,\xi_n)
		=\exp\left(-\frac{\varepsilon}{2} \sum_{k=1}^{n}|\xi_1+\ldots+\xi_k|^2 \right)
		\cF f_n(\cdot,x;t)(\xi_1,\ldots,\xi_n).
	\end{equation}
	If $\widetilde{f}_{n,\varepsilon}(\cdot,x;t)$ be the symmetrization of $f_{n,\varepsilon}(\cdot,x;t)$, then
	\begin{align*}
		\cF \widetilde{f}_{n,\varepsilon}(\cdot,x;t) (\xi_1,\ldots,\xi_n)
		= & \frac{1}{n!}\sum_{\rho \in \Sigma_n} \exp\left(-\frac{\varepsilon}{2} \sum_{k=1}^{n}|\xi_{\rho(1)}+\ldots+\xi_{\rho(k)}|^2 \right) \\
	          & \times \cF f_n\left(\cdot,x;t\right)(\xi_{\rho(1)},\ldots,\xi_{\rho(n)}).
	\end{align*}
	Note that
	\begin{equation}
		\label{Fe-conv}
		\lim_{\varepsilon \to 0_+} \cF \widetilde{f}_{n,\varepsilon}(\cdot,x;t)(\xi_1,\ldots,\xi_n)
		=\cF \widetilde{f}_{n}(\cdot,x;t)(\xi_1,\ldots,\xi_n).
	\end{equation}
	The function $\cF \widetilde{f}_{n,\varepsilon}(\cdot,0;t)$ has the following scaling property:
	\begin{align*}
		\cF \widetilde{f}_{n,\varepsilon}(\cdot,0;ct)(\xi_1,\ldots,\xi_n)
		=c^{2n} \cF \widetilde{f}_{n,\:\varepsilon/c^2}(\cdot,0;t)(c\xi_1,\ldots,c\xi_n), \quad\text{for any $c>0$}.
	\end{align*}
	This leads to:
	\begin{align*}
		\Norm{\widetilde{f}_{n,\varepsilon}(\cdot,t)}_{\cH^{\otimes n}}^2
		= t^{(4-\alpha)n} \Norm{\widetilde{f}_{n,\varepsilon/t^2}(\cdot,0;1)}_{\cH^{\otimes n}}^2
	\end{align*}
	and
	\begin{align*}
		\int_ 0^{\infty}\e^{-t}\Norm{\widetilde{f}_{n,\varepsilon}(\cdot,0;t)}_{\cH^{\otimes n}}^2 dt
		=2^{(4-\alpha)n}  \int_0^{\infty}2 \e^{-2t}\Norm{\widetilde{f}_{n,\varepsilon/4}(\cdot,0;t)}_{\cH^{\otimes n}}^2 dt.
	\end{align*}

	Similar to \eqref{norm-fn-X}, by replacing $G$ by $G_{\varepsilon}$, we have:
	\begin{equation}
		\label{norm-fne}
		\Norm{\widetilde{f}_{n,\varepsilon}(\cdot,0;t)}_{\cH^{\otimes n}}^{2}
		=\frac{1}{(n!)^2} \int_{(\bR^d)^n}H_{n,\varepsilon}^2(t,{\bf x}) d{\bf x}.
	\end{equation}
	Since the function $t \mapsto H_{n,\varepsilon}(t,{\bf x})$ is non-negative and non-decreasing, by the same argument as in Case 1,
	we see that
	\begin{align*}
		\int_ 0^{\infty}\e^{-t}\Norm{\widetilde{f}_{n,\varepsilon}(\cdot,0;t)}_{\cH^{\otimes n}}^2 dt
		& =2^{(4-\alpha)n}  \frac{1}{(n!)^2} \int_{(\bR^d)^n}\left(\int_0^{\infty}2 \e^{-2t}H_{n,\varepsilon/4}^2(t,{\bf x})dt\right) d {\bf x}\\
		& \leq 2^{(4-\alpha)n} \frac{1}{(n!)^2} \int_{(\bR^d)^n} \left(\int_0^{\infty}\e^{-t}H_{n,\varepsilon/4}(t,{\bf x})dt \right)^2 d{\bf x} \\
    & = 2^{(4-\alpha)n} \frac{1}{(n!)^2}  \int_{(\bR^d)^n}|\cF L_{n,\varepsilon/4}(\xi_1,\ldots,\xi_n)|^2 \mu(d\xi_1) \ldots \mu(d\xi_n),
	\end{align*}
	where the last line is due to Lemma \ref{L:IntHn}.
	By bounding from above the exponential term in the expression of $\cF L_{n,\varepsilon}$ in \eqref{E:FLn} by $1$, we see that
	\begin{align*}
		\int_ 0^{\infty}\e^{-t}\Norm{\widetilde{f}_{n,\varepsilon}(\cdot,0;t)}_{\cH^{\otimes n}}^2 dt
		 & \leq 2^{(4-\alpha)n} \frac{1}{(n!)^2} \int_{(\bR^d)^n} \left[\sum_{\sigma \in \Sigma_n} \prod_{k=1}^{n} \frac{1}{1+|\sum_{j=k}^n \xi_{\sigma(j)}|^2} \right]^2
		 	\mu(d\xi_1)\ldots \mu(d\xi_n)\\
		 & = 2^{(4-\alpha)n} \frac{1}{(n!)^2} T_n.
	\end{align*}
	Using Fatou's lemma and \eqref{Fe-conv},
	\begin{align*}
		\int_ 0^{\infty}\e^{-t}\Norm{\widetilde{f}_{n}(\cdot,0;t)}_{\cH^{\otimes n}}^2 dt
		& =\int_0^{\infty}\e^{-t} \int_{(\bR^d)^n}|\cF \widetilde{f}_{n}(\cdot,0;t)(\xi_1,\ldots,\xi_n)|^2 \mu(d\xi_1) \ldots \mu(d\xi_n)\\
		& \leq \liminf_{\varepsilon \to 0+}\int_0^{\infty}\e^{-t} \int_{(\bR^d)^n}|\cF \widetilde{f}_{n,\varepsilon}(\cdot,0;t)(\xi_1,\ldots,\xi_n)|^2 \mu(d\xi_1) \ldots \mu(d\xi_n)\\
		& =\liminf_{\varepsilon \to 0+}\int_0^{\infty}\e^{-t}\Norm{\widetilde{f}_{n,\varepsilon}(\cdot,0;t)}_{\cH^{\otimes n}}^2 dt \leq 2^{(4-\alpha)n} \frac{1}{(n!)^2} T_n.
	\end{align*}
	This completes the proof of \eqref{E:LapaceNorm} under Assumption \ref{A:Main}.
	\bigskip

	Finally, the case when Assumption \ref{A:White} (resp. Assumption \ref{A:Critical}) holds can be proved similarly
	to Case 1 (resp. Case 2) with $\alpha$ and $\mu(d\xi)$ replaced by $d$ and  $(2\pi)^{-d}d\xi$, respectively.
	This completes the proof of Lemma \ref{L:LpalaceNorm}.
\end{proof}
\bigskip

Now we are ready to prove Theorem \ref{T:iff}.

\begin{proof}[Proof of Theorem \ref{T:iff}.]
	We first assume that Assumption \ref{A:Main} holds with $0<\alpha<d\le 3$.
	By Theorem \ref{T:ExUn}, we need to prove that
	$\sum_{n\geq 0}\theta^n n! \|\widetilde{f}_n(\cdot,0;t)\|_{\cH^{\otimes n}}^2<\infty$, for all $t>0$.
	By the scaling property \eqref{E:scaleTildeFn},
	provided that $(4-\alpha)n+1>0$ (which is true for all $n\ge 1$ since $\alpha<4$),
	\begin{equation}
		\label{E:GammaLapInt}
		\int_0^{\infty}\e^{-t}\Norm{\widetilde{f}_n(\cdot,0;t)}_{\cH^{\otimes n}}^2 dt
		=\Gamma((4-\alpha)n+1)\Norm{\widetilde{f}_n(\cdot,0;1)}_{\cH^{\otimes n}}^2.
	\end{equation}
	By \eqref{E:GammaLapInt}, Lemma \ref{L:LpalaceNorm} and Stirling's formula, there is some constant $c_1>0$ depending only on $\alpha$ (see \eqref{E:StirlingRate} for a justification) such that
	\begin{align*}
		\Norm{\widetilde{f}_n(\cdot,0;1)}_{\cH^{\otimes n}}^2
		\leq \frac{(2^{4-\alpha}C_{\mu}')^n}{\Gamma((4-\alpha)n+1)}
		\leq c_1^n \frac{(2^{4-\alpha}C_{\mu}')^n}{(n!)^{4-\alpha}}.
	\end{align*}
	 Using property \eqref{E:scaleTildeFn} again, we infer that
	\begin{align*}
		\sum_{n\geq 0}\theta^n n! \Norm{\widetilde{f}_n(\cdot,0;t)}_{\cH^{\otimes n}}^2
		\leq \sum_{n\geq 0} \theta^n t^{(4-\alpha)n }c_1^n \frac{(2^{4-\alpha}C_{\mu}')^n}{(n!)^{3-\alpha}}
		< \infty.
	\end{align*}
	Relation \eqref{E:mom-p} follows by noticing that for all $p\ge 2$, by Minkowski's inequality and the hypercontractivity property (see p.62 of Nualart \cite{nualart06}) 
	\begin{align}
		\label{E_:pmoment}
		\Norm{u(t,x)}_p
		& \leq \sum_{n\geq 0} \theta^{n/2} (p-1)^{n/2} (n!)^{1/2} \Norm{ \widetilde{f}_n(\cdot,x;t)}_{\cH^{\otimes n}} \\ \notag
		& \leq \sum_{n\geq 0} \theta^{n/2} (p-1)^{n/2}t^{\frac{4-\alpha}{2}n}c_1^{n/2}\frac{(2^{4-\alpha} C_{\mu}')^{n/2}}{(n!)^{\frac{3-\alpha}{2}}},
	\end{align}
	which is finite for all $t\ge 0$ since $\alpha<d\le 3$.
	\bigskip

	We now study the white noise case (Assumptions \ref{A:White} or \ref{A:Critical}). In this case, the scaling property \eqref{E:scaleTildeFn} still holds with $\alpha$ replaced by $d$, i.e.
	\begin{equation}
		\label{E:scale-white}
		\Norm{\widetilde{f}_n(\cdot,0;t)}_{\cH^{\otimes n}}^2 =t^{(4-d)n}\Norm{\widetilde{f}_n(\cdot,0;1)}_{\cH^{\otimes n}}^2.
	\end{equation}

	Hence, the case of Assumption \ref{A:White} is proved exactly as above with $\alpha$ replaced by $d$.
	Similarly, in the case when Assumption \ref{A:Critical} holds, by the same arguments as above with $\alpha$ replaced by $3$, we see that
	\begin{align*}
		\Norm{\widetilde{f}_n(\cdot,0;1)}_{\cH^{\otimes n}}^2
		\leq \frac{(2\: \left(2\pi\right)^{-3}C_{\text{Leb}}')^n}{\Gamma(n+1)}
		=  \frac{(2\left(2\pi\right)^{-3}C_{\text{Leb}}')^n}{n!},
	\end{align*}
	where $C_{\text{Leb}}'$ is defined in \eqref{E:alpha<4}.
	For any $p\ge 2$, from \eqref{E_:pmoment}, we see that
	\begin{align*}
		\Norm{u(t,x)}_p \le \sum_{n\ge 0} \theta^{n/2} (p-1)^{n/2} t^{n/2} (2\left(2\pi\right)^{-3}C_{\text{Leb}}')^{n/2},
	\end{align*}
	which is convergent provided that $t<T_p' := \frac{4\pi^3}{\theta (p-1) C_{\text{Leb}}'}$.
	Finally, applying (\ref{int}) to the setting $\mu(d\xi)=(2\pi)^{-3}d\xi$ and $\alpha=d=3$
	we have $C_{\text{Leb}}'=\pi^{2}$. With this, the proof of Theorem \ref{T:iff} is complete.
\end{proof}

\section{The case $p=2$}
\label{S:p=2}

The aim of this section is to prove the following result:

\begin{theorem}
	\label{T:p2}
	(i) If Assumption \ref{A:Main} holds with $0<\alpha<d\le 3$, then for all $x\in\R^d$,
	\begin{align}
		\label{E:p2}
		\lim_{t\to \infty}t^{-\frac{4-\alpha}{3-\alpha}}\log \E \left(|u(t,x)|^2\right)
		= \theta^{\frac{1}{3-\alpha}}  \left(\frac{1}{2}\right)^{\frac{\alpha}{2(3-\alpha)}} (3-\alpha) \left(\frac{2\cM^{1/2}}{4-\alpha}\right)^{\frac{4-\alpha}{3-\alpha}}.
	\end{align}
	(ii) Relation \eqref{E:p2} holds also under Assumption \ref{A:White}, with $\alpha$ and $\cM$ replaced by $d$ and $\cM(\delta_0)$, respectively.\\
	(iii) Under Assumption \ref{A:Critical}, let $T_2(\theta)$ be the constant defined in \eqref{E:Tp}.
	Then for any $x\in\R^d$ and $\theta>0$, it holds that
	\begin{align}
		\label{E:p2iii}
		\sum_{n\geq 0} \theta^n n! \Norm{\widetilde{f}_n(\cdot,x;t)}_{\cH^{\otimes n}}^2 \quad
		\begin{cases}
			\quad <\infty & \text{if $t<T_2(\theta)$,} \\
			\quad =\infty & \text{if $t> T_2(\theta)$.}
		\end{cases}
	\end{align}
\end{theorem}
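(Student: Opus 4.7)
The plan is to reduce Theorem \ref{T:p2} to the exponential growth of $b_n := n!\,\|\widetilde{f}_n(\cdot,0;1)\|_{\cH^{\otimes n}}^{\,2}$. By translation invariance of $W$, the law of $u(t,x)$ is independent of $x$; setting $x=0$ and applying the scaling identity \eqref{E:scaleTildeFn} gives
\[
\E(|u(t,0)|^2) = \sum_{n\ge 0}(\theta t^{4-\alpha})^n\,b_n.
\]
Parts (i) and (ii) require the sharp value of $\lim_n n^{-1}\log[b_n\,\Gamma((4-\alpha)n+1)/n!]$; combined with Stirling and a saddle-point maximization in $n$ (with optimum at $n^{\ast}=[\theta t^{4-\alpha}\cdot 2^{4-\alpha}\rho/(4-\alpha)^{4-\alpha}]^{1/(3-\alpha)}$), this yields the stated limits. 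Part (iii), in which $\alpha=d=3$ collapses $\Gamma((4-\alpha)n+1)$ to $n!$, requires only $\limsup_n b_n^{1/n}$.

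The upper-bound side is direct: Lemma \ref{L:LpalaceNorm} gives $b_n \le 2^{(4-\alpha)n}T_n/[n!\,\Gamma((4-\alpha)n+1)]$, while \eqref{BCR0} together with the identification $\rho=(1/2)^{\alpha/2}\cM^{(4-\alpha)/2}$ from \eqref{E:rhoM} yields $T_n/(n!)^2=\rho^n e^{o(n)}$. Plugging these into the saddle-point computation and simplifying using \eqref{E:rhoM} produces precisely the constant on the right-hand side of \eqref{E:p2} as an upper bound, and, for part (iii), gives $\limsup_n b_n^{1/n}\le 2\rho=\cM(\delta_0)^{1/2}/\sqrt{2}$, so the series converges for $\theta t<1/(2\rho)=\theta T_2(\theta)$.

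The main obstacle is the matching lower bound: reversing the Laplace-transform chain through ordinary Cauchy--Schwarz only yields $b_n \ge \rho^n\,n!/\Gamma((4-\alpha)n+1)\cdot e^{o(n)}$, losing the decisive factor $2^{(4-\alpha)n}$, which is not subexponential. The plan is to work directly at the Fourier level, using the representation \eqref{E:Ffn} together with the explicit Laplace-transform identity $\int_0^\infty e^{-t}\sin^2(t|\xi|)/|\xi|^2\,dt = 2/(1+4|\xi|^2)$: plug an approximate extremizer of the variational problem \eqref{E:rho} into the Parseval form of $\int_0^\infty e^{-t}\|\widetilde{f}_n(\cdot,0;t)\|^2\,dt$, restrict the $n$-fold time simplex to a neighbourhood of the Laplace saddle (where each $\sin^2$ factor can effectively be replaced by its temporal mean $1/2$ with controlled error), and then apply the lower-bound half of \eqref{BCR0}. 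This should deliver $b_n \ge (2^{4-\alpha}\rho)^n\,n!/\Gamma((4-\alpha)n+1)\cdot e^{o(n)}$, matching the upper bound via the same saddle-point analysis and thereby completing parts (i) and (ii). In particular, when $\alpha=d=3$ it forces $\liminf_n b_n^{1/n}\ge 2\rho$, so the series diverges whenever $\theta t > 1/(2\rho) = \theta T_2(\theta)$, establishing \eqref{E:p2iii} through \eqref{E:Tp}.
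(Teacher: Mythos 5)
Your reduction to the exponential rate of $b_n=n!\,\|\widetilde{f}_n(\cdot,0;1)\|_{\cH^{\otimes n}}^2$, your upper bound via Lemma \ref{L:LpalaceNorm}, \eqref{BCR0} and \eqref{E:rhoM}, and your Cauchy--Hadamard treatment of part (iii) all agree with the paper (which packages the saddle-point step as the Mittag--Leffler asymptotics of Lemma \ref{L:Gamma} plus the comparison Lemma \ref{Rn-compare}). The problem is the matching lower bound, which you correctly identify as the crux but then only sketch. Your plan --- plug an approximate extremizer of \eqref{E:rho} into the Parseval form of $\int_0^\infty e^{-t}\|\widetilde f_n(\cdot,0;t)\|^2\,dt$, localize on the time simplex, and replace each $\sin^2$ by its temporal mean $1/2$ --- does not survive scrutiny as stated: $|\cF\widetilde f_n|^2$ is a \emph{double} sum over permutations and a double integral over two independent time simplices, so the integrand is a product of factors $\sin(\Delta t_k|\eta_k|)\sin(\Delta s_k|\eta_k'|)$ with \emph{different} increments and frequencies, not $\sin^2$ of a common argument; the identity $\int_0^\infty e^{-t}\sin^2(t|\xi|)|\xi|^{-2}dt=2/(1+4|\xi|^2)$ never directly applies, the cross-permutation terms are oscillatory and of indefinite sign, and no mechanism is given for converting the lower-bound half of \eqref{BCR0} (which concerns the non-oscillatory kernel $T_n$) into a lower bound for $b_n$. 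Since part (iii)'s divergence for $t>T_2$ also rests on $\liminf_n b_n^{1/n}\ge 2\rho$, the gap affects all three parts.

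Moreover, the premise that motivates this detour --- that ``ordinary Cauchy--Schwarz loses the decisive factor $2^{(4-\alpha)n}$'' --- is true only for the naive single-time application. The paper's Lemma \ref{L:TwoLim} recovers the factor with a short bilinear argument: set $J_n(t,\tilde t)=\int H_n(t,\bx)H_n(\tilde t,\bx)\,d\bx$, so that $\E[J_n(\tau,\widetilde\tau)]=T_n$ for independent mean-one exponentials $\tau,\widetilde\tau$; then Cauchy--Schwarz and the scaling $J_n(t,t)=t^{(4-\alpha)n}J_n(1,1)$ give
\begin{align*}
T_n \;\le\; \E\!\left[\tau^{\frac{4-\alpha}{2}n}\right]^2 J_n(1,1)
\;=\;\Gamma\!\left(\tfrac{4-\alpha}{2}n+1\right)^{2} n!\, b_n ,
\end{align*}
and Stirling yields $\Gamma\!\left(\tfrac{4-\alpha}{2}n+1\right)^{2}=\Gamma\!\left((4-\alpha)n+1\right)2^{-(4-\alpha)n}e^{o(n)}$, so the square of the half-order Gamma beats the full-order Gamma by exactly $2^{(4-\alpha)n}$ and the bound \eqref{LB-Tn} matches your upper bound to exponential order. (For $d=3$ one first runs this with the mollification $f_n*p_\varepsilon^{\otimes n}$ and lets $\varepsilon\to0$.) You should replace your Fourier-level plan with this argument, or else supply a genuinely complete treatment of the oscillatory integrals; as written, the lower bound is not established.
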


\begin{remark}
	The radius of convergence $T_2$ obtained in part (iii) of Theorem \ref{T:p2} is bounded from below by $T_2'$ obtained in Theorem \ref{T:iff}; see Lemma \ref{L:Tp} below.
	Hence, the solvability condition obtained in part (iii) of Theorem \ref{T:p2} improves part (iii) of Theorem \ref{T:iff}.
\end{remark}

We first prove a lemma.

\begin{lemma}
	\label{L:TwoLim}
	Under Assumption \ref{A:Main} with $0<\alpha<d\le 3$, it holds that
	\begin{align}
		\label{E:lim1}
		&\lim_{n\to \infty} \frac{1}{n} \log \left(\Gamma((4-\alpha)n+1) \Norm{\widetilde{f}_n(\cdot,0;1)}_{\cH^{\otimes n}}^2\right)
		= \log \left[2^{4-\alpha} 2^{-\frac{\alpha}{2}} \cM^{\frac{4-\alpha}{2}}\right]\quad\text{and}\\
		\label{E:lim2}
		&\lim_{n\to \infty} \frac{1}{n} \log \left((n!)^{4-\alpha} \Norm{\widetilde{f}_n(\cdot,0;1)}_{\cH^{\otimes n}}^2\right)
		= \log \left[\left(\frac{2}{4-\alpha}\right)^{4-\alpha} 2^{-\frac{\alpha}{2}} \cM^{\frac{4-\alpha}{2}}\right].
	\end{align}
	Both relations \eqref{E:lim1} and \eqref{E:lim2} hold also under either Assumption \ref{A:White} or \ref{A:Critical},
	with $\alpha$ and $\cM$ replaced by $d$ and $\cM(\delta_0)$, respectively.
\end{lemma}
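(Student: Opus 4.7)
The plan is to establish \eqref{E:lim1} under Assumption \ref{A:Main} with $0<\alpha<d\le 3$ by matching an upper bound implicit in the proof of Lemma \ref{L:LpalaceNorm} with a lower bound obtained via Cauchy--Schwarz in $\cH^{\otimes n}$, and then to deduce \eqref{E:lim2} by Stirling; the white-noise cases then go through verbatim with $\alpha,\cM$ replaced by $d,\cM(\delta_0)$. Combining inequality \eqref{E:LapaceNorm} with the Laplace identity \eqref{E:GammaLapInt} gives
\begin{align*}
\Gamma\bigl((4-\alpha)n+1\bigr)\Norm{\widetilde{f}_n(\cdot,0;1)}_{\cH^{\otimes n}}^{2}\le 2^{(4-\alpha)n}\frac{T_n}{(n!)^{2}},
\end{align*}
and since \eqref{BCR0} and the identification $\rho=(1/2)^{\alpha/2}\cM^{(4-\alpha)/2}$ from \eqref{E:rhoM} yield $\tfrac{1}{n}\log(T_n/(n!)^{2})\to\log\rho$, one immediately obtains the upper bound $\log[2^{4-\alpha}\cdot 2^{-\alpha/2}\cM^{(4-\alpha)/2}]$ for the $\limsup$ in \eqref{E:lim1}.

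For the matching lower bound, the identity $L_n=n!\int_0^\infty\e^{-t}\widetilde{f}_n(\cdot,0;t)dt$ together with Plancherel gives (using the mollified kernel $\widetilde{f}_{n,\varepsilon}$ and passing $\varepsilon\downarrow 0$ when $d=3$, as in the proof of Lemma \ref{L:LpalaceNorm})
\begin{align*}
\frac{T_n}{(n!)^{2}}=\int_0^\infty\!\!\int_0^\infty \e^{-t-s}\InPrd{\widetilde{f}_n(\cdot,0;t),\widetilde{f}_n(\cdot,0;s)}_{\cH^{\otimes n}}\,dt\,ds.
\end{align*}
Cauchy--Schwarz in $\cH^{\otimes n}$ combined with the scaling \eqref{E:scaleTildeFn} then yields
\begin{align*}
\frac{T_n}{(n!)^{2}}\le\left(\int_0^\infty \e^{-t}t^{(4-\alpha)n/2}dt\right)^{2}\Norm{\widetilde{f}_n(\cdot,0;1)}_{\cH^{\otimes n}}^{2}=\Gamma\!\left(\tfrac{4-\alpha}{2}n+1\right)^{2}\Norm{\widetilde{f}_n(\cdot,0;1)}_{\cH^{\otimes n}}^{2}.
\end{align*}
Rearranging and invoking the Stirling asymptotic $\tfrac{1}{n}\log\bigl[\Gamma((4-\alpha)n+1)/\Gamma(\tfrac{4-\alpha}{2}n+1)^{2}\bigr]\to(4-\alpha)\log 2$ produces a $\liminf$ lower bound that exactly matches the previous upper bound, proving \eqref{E:lim1}.

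Relation \eqref{E:lim2} will follow from \eqref{E:lim1} by dividing out $\Gamma((4-\alpha)n+1)/(n!)^{4-\alpha}$, whose $\tfrac{1}{n}\log$-rate equals $(4-\alpha)\log(4-\alpha)$ by Stirling; this converts the constant in \eqref{E:lim1} into $\bigl(\tfrac{2}{4-\alpha}\bigr)^{4-\alpha}\cdot 2^{-\alpha/2}\cM^{(4-\alpha)/2}$. Under Assumption \ref{A:White} or \ref{A:Critical}, the analog of \eqref{BCR0} is available (as noted in the discussion following \eqref{BCR0}) with $\mu(d\xi)=(2\pi)^{-d}d\xi$, and the same argument delivers the result with $\alpha,\cM$ replaced by $d,\cM(\delta_0)$. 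The main technical obstacle is justifying the Laplace--Plancherel representation of $T_n/(n!)^{2}$ in the singular $d=3$ case: because $G$ is a measure, one must run Cauchy--Schwarz on the mollified kernel $\widetilde{f}_{n,\varepsilon}$ and then pass $\varepsilon\downarrow 0$ using dominated convergence and the pointwise convergence $\cF L_{n,\varepsilon}\to\cF L_n$ together with the bound $|\cF L_{n,\varepsilon}|\le|\cF L_n|$, paralleling the strategy employed in the proof of Lemma \ref{L:LpalaceNorm}.
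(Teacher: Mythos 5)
Your overall architecture coincides with the paper's: the upper bound via \eqref{E:LapaceNorm}, \eqref{E:GammaLapInt}, \eqref{BCR0} and \eqref{E:rhoM} is exactly the paper's argument, and your lower bound for $d\le 2$ — writing $T_n/(n!)^2$ as the double Laplace transform $\int_0^\infty\int_0^\infty \e^{-t-s}\InPrd{\widetilde{f}_n(\cdot,0;t),\widetilde{f}_n(\cdot,0;s)}_{\cH^{\otimes n}}\,dt\,ds$, applying Cauchy--Schwarz and the scaling \eqref{E:scaleTildeFn}, then Stirling — is the same computation the paper carries out in the equivalent language $T_n=\E[J_n(\tau,\widetilde\tau)]$ with $\tau,\widetilde\tau$ independent exponentials. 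The Stirling rates you quote and the reduction of \eqref{E:lim2} to \eqref{E:lim1} are also correct and identical to the paper's.

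There is, however, a genuine gap in your treatment of the $d=3$ case: you propose to run the Cauchy--Schwarz argument on the mollified kernel $\widetilde{f}_{n,\varepsilon}$ built from $G_\varepsilon=G*p_\varepsilon$ (the one used in Lemma \ref{L:LpalaceNorm}), but this mollification does not close the argument. Two things go wrong. First, the Gaussian damping factor in $\cF \widetilde{f}_{n,\varepsilon}$ is $\exp(-\tfrac{\varepsilon}{2}\sum_k|\xi_{\rho(1)}+\cdots+\xi_{\rho(k)}|^2)$, which depends on the permutation $\rho$ inside the symmetrization sum; since the summands are complex, you do \emph{not} get the pointwise domination $|\cF\widetilde{f}_{n,\varepsilon}|\le|\cF\widetilde{f}_n|$ that the Cauchy--Schwarz step needs (your bound $|\cF L_{n,\varepsilon}|\le|\cF L_n|$ is true because those summands are all positive, but it is the wrong object: what you must control is $\Norm{\widetilde{f}_{n,\varepsilon}(\cdot,0;t)}_{\cH^{\otimes n}}$, not $\cF L_{n,\varepsilon}$). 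Second, the scaling law of this mollification couples $\varepsilon$ with $t$, namely $\Norm{\widetilde{f}_{n,\varepsilon}(\cdot,0;t)}_{\cH^{\otimes n}}^2=t^{(4-\alpha)n}\Norm{\widetilde{f}_{n,\varepsilon/t^2}(\cdot,0;1)}_{\cH^{\otimes n}}^2$, so after Cauchy--Schwarz you are left with $\Norm{\widetilde{f}_{n,\varepsilon/t^2}(\cdot,0;1)}$ under the $dt$-integral rather than the fixed quantity $\Norm{\widetilde{f}_{n}(\cdot,0;1)}$, and you have no uniform-in-$\varepsilon$ bound to pass to the limit in the direction you need. The paper avoids both problems by switching to the spatial product mollification $f_{n,\varepsilon}'=f_n*p_\varepsilon^{\otimes n}$, whose Fourier multiplier $\exp(-\tfrac{\varepsilon}{2}\sum_k|\xi_k|^2)$ is symmetric in $\xi_1,\dots,\xi_n$, hence commutes with symmetrization and yields $\Norm{\widetilde{f}_{n,\varepsilon}'(\cdot,0;t)}_{\cH^{\otimes n}}\le\Norm{\widetilde{f}_{n}(\cdot,0;t)}_{\cH^{\otimes n}}$ for every $t$ and $\varepsilon$; the Cauchy--Schwarz and scaling steps then go through with $\varepsilon$ fixed, and $T_{n,\varepsilon}'\to T_n$ by dominated convergence. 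You should replace your $d=3$ regularization by this one.
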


\begin{proof}
	It suffices to prove \eqref{E:lim1}. Relation \eqref{E:lim2} follows from \eqref{E:lim1}, using the fact that
	\begin{align}
		\label{E:StirlingRate}
		\lim_{n\to \infty}\frac{1}{n}\log \frac{\Gamma(an+1)}{(n!)^a}=a\log a \quad \mbox{for any} \ a>0,
	\end{align}
	which is a consequence of Stirling's formula for the Gamma function.
	The upper bound in \eqref{E:lim1} a direct consequence of Lemma \ref{L:LpalaceNorm} and the scaling property \eqref{E:scaleTildeFn} (resp. \eqref{E:scale-white})
	of $\|\widetilde{f}_n\left(\cdot,x;t\right)\|_{\cH^{\otimes n}}^2$ under Assumption \ref{A:Main} (resp. Assumption \ref{A:White} or \ref{A:Critical}).
	Indeed, under Assumption \ref{A:Main}, by \eqref{E:LapaceNorm} and \eqref{E:GammaLapInt},
	\begin{align*}
		\Gamma((4-\alpha)n+1)\Norm{\widetilde{f}_n(\cdot,0;1)}_{\cH^{\otimes n}}^2
		\leq 2^{(4-\alpha)n} \frac{1}{(n!)^2}T_n,
	\end{align*}
	where $T_n$ is given by \eqref{E:def-Tn}.
	Using \eqref{BCR0} and \eqref{E:rhoM}, we obtain the desired upper bound:
	\begin{align*}
		\limsup_{n\to \infty}\frac{1}{n}\log \left(\Gamma((4-\alpha)n+1)\Norm{\widetilde{f}_n(\cdot,0;1)}_{\cH^{\otimes n}}^2\right)
		\leq \log 2^{4-\alpha}+\log \left(2^{-\frac{\alpha}{2}}\cM^{\frac{4-\alpha}{2}}\right).
	\end{align*}
	Under either Assumption \ref{A:White} or \ref{A:Critical}, one simply replaces $\alpha$ and $\cM$ in the above inequality by $d$ and $\cM(\delta_0)$, respectively.

	\bigskip
	The lower bound in \eqref{E:lim1} is a consequence of the following inequality:
	\begin{equation}
		\label{LB-Tn}
		\frac{1}{(n!)^2}T_n \leq \Gamma\left(\frac{4-\alpha}{2}n+1\right)^2 \Norm{\widetilde{f}_n(\cdot,0;1)}_{\cH^{\otimes n}}^2.
	\end{equation}
	Indeed, another application of Stirling's formula shows that
	\begin{align*}
		\Gamma\left(\frac{(4-\alpha)n}{2}+1\right)^2
		\sim \Gamma\big((4-\alpha)n+1\big) 2^{-(4-\alpha)n} C_n,
	\end{align*}
	where $C_n=2^{-1} (4-\alpha)^{1/2} (2\pi n)^{1/2}$.
	Using \eqref{LB-Tn}, we infer that
	\begin{align*}
		c_{\alpha} \frac{2^{(4-\alpha)n}}{C_n} \frac{1}{(n!)^2}T_n
		\leq \Gamma\big((4-\alpha)n+1\big) \Norm{\widetilde{f}_n(\cdot,0;1)}_{\cH^{\otimes n}}^2,
	\end{align*}
	where $c_{\alpha}>0$ is a constant depending on $\alpha$.
	Using \eqref{BCR0} and the fact that $\frac{1}{n} \log C_n \to 0$ as $n \to \infty$, we obtain the desired lower bound:
	\begin{align*}
		\liminf_{n\to \infty}\frac{1}{n}\log \left(\Gamma((4-\alpha)n+1) \: \Norm{\widetilde{f}_n(\cdot,0;1)}_{\cH^{\otimes n}}^2\right)
		\geq \log 2^{4-\alpha}+\log \rho.
	\end{align*}
	Then plugging the formula \eqref{E:rhoM} for $\rho$ proves \eqref{E:lim1}.
	Therefore, it remains to prove \eqref{LB-Tn}.
	For this, we first assume that Assumption \ref{A:Main} holds. We treat separately the cases $d\leq 2$ and $d=3$ below.
	\bigskip

	{\noindent\em Case 1. Assume that $d\leq 2$.} For any $t>0$ and $\widetilde{t}>0$, define
	\begin{align*}
		J_n(t,\tilde{t})=\int_{(\bR^d)^n}H_n(t,{\bf x})H_n(\tilde{t},{\bf x})d\bf x,
	\end{align*}
	where $H_n(t,x)$ is defined in \eqref{E:Hn}.
	With this notation, relation \eqref{E:keyNorm} becomes:
	\begin{equation}
		\label{fn-Jn}
		\Norm{\widetilde{f}_n(\cdot,0;t)}_{\cH^{\otimes n}}^{2}
		=\frac{1}{(n!)^2}\int_{(\bR^d)^n}H_n^2(t,{\bf x})d{\bf x}
		=\frac{1}{(n!)^2}J_n(t,t).
	\end{equation}
	Hence, $J_n$ satisfies the same scaling property as $\|\widetilde{f}_n(\cdot,0;t)\|_{\cH^{\otimes n}}^{2}$:
	\begin{equation}
		\label{scaling-Jn}
		J_n(t,t)=t^{(4-\alpha)n}J_n(1,1).
	\end{equation}

	Let $\tau$ and $\widetilde{\tau}$ be independent exponential random variables of mean $1$. By Lemma \ref{L:IntHn}, 
	\begin{align*}
		\E\left[J_n(\tau,\widetilde{\tau})\right]
		&=\int_0^{\infty}\int_{0}^{\infty}\e^{-t} \e^{-\tilde t} J_n(t,\tilde{t})dt d\tilde t=\int_{(\bR^d)^n} \left[\int_0^{\infty}\e^{-t} H_n(t,{\bf x})dt \right]^2 d{\bf x} \\
		&=\int_{(\bR^d)^n}|\cF L_n(\xi_1,\ldots,\xi_n)|^2 \mu(d\xi_1)\ldots \mu(d\xi_n)=T_n.
	\end{align*}

	On the other hand, by Cauchy-Schwarz inequality and \eqref{scaling-Jn},
	\begin{align*}
    J_n\left(t,\tilde{t}\:\right) \leq J_n(t,t)^{1/2} J_n\left(\tilde{t},\tilde{t}\:\right)^{1/2}=t^{(4-\alpha)n/2} \tilde{t}^{(4-\alpha)n/2} J_n(1,1)
	\end{align*}
	for any $t>0$ and $\tilde{t}>0$. Hence,
	\begin{align*}
		T_n & = \E\left[J_n(\tau,\widetilde{\tau})\right]
			\leq \E\left[\tau^{(4-\alpha)n/2}\right] \E\left[\widetilde{\tau}^{(4-\alpha)n/2}\right]J_n(1,1)
			=\Gamma\left(\frac{4-\alpha}{2}n+1\right)^2 J_n(1,1) \\
		    & =\Gamma\left(\frac{4-\alpha}{2}n+1\right)^2 (n!)^2 \Norm{\widetilde{f}_n(\cdot,0;1)}_{\cH^{\otimes n}}^2.
	\end{align*}
	\bigskip

	{\noindent\em Case 2. Assume that $d=3$.} Note that the proof below is in fact valid for any $d\geq 3$.
	Instead of the approximation $f_{n,\varepsilon}$ which was used for the proof of \eqref{E:LapaceNorm} in case $d=3$, here we use another approximation, namely
	\begin{align*}
		f_{n,\varepsilon}'(\cdot,x;t)=f_n(\cdot,x;t)* p_{\varepsilon}^{\otimes n},
	\end{align*}
	where $p_{\varepsilon}^{\otimes n}(x_1,\ldots,x_n)=\prod_{k=1}^n p_{\varepsilon}(x_k)$ and
	$p_{\varepsilon}(x)=(2\pi \varepsilon)^{-d/2} \e^{-|x|^2/(2\varepsilon)}$.
	It is known that $f_{n,\varepsilon}'(\cdot,x;t)$ is a $C^{\infty}$-function on $\bR^{nd}$, which belongs to $\cS'(\bR^{nd})$ and has Fourier transform given by
	\begin{align}
		\cF f_{n,\varepsilon}'(\cdot,x;t)(\xi_1,\ldots,\xi_n)
		=\exp\Big(-\frac{\varepsilon}{2}\sum_{k=1}^{n}|\xi_k|^2 \Big) \cF f_{n}(\cdot,x;t)(\xi_1,\ldots,\xi_n).
	\end{align}
	Let $\widetilde{f}_{n,\varepsilon}'(\cdot,x;t)$ be the symmetrization of $f_{n,\varepsilon}'(\cdot,x;t)$.
	Then
	\begin{align*}
		\cF \widetilde{f}_{n,\varepsilon}'(\cdot,x;t)(\xi_1,\ldots,\xi_n)
		=\exp\Big(-\frac{\varepsilon}{2}\sum_{k=1}^{n}|\xi_k|^2 \Big) \cF \widetilde{f}_{n}(\cdot,x;t)(\xi_1,\ldots,\xi_n).
	\end{align*}
	Hence, $|\cF \widetilde{f}_{n,\varepsilon}'(\cdot,x;t)(\xi_1,\ldots,\xi_n)| \leq |\cF \widetilde{f}_{n}(\cdot,x;t)(\xi_1,\ldots,\xi_n)|$ and

	\begin{equation}
		\label{ineq-e}
		\Norm{\widetilde{f}_{n,\varepsilon}'(\cdot,0;t)}_{\cH^{\otimes n}}^2
		\leq \Norm{\widetilde{f}_{n}(\cdot,0;t)}_{\cH^{\otimes n}}^2 \quad \mbox{for any} \ \varepsilon>0.
	\end{equation}
	By \eqref{ineq-e} and Fatou's lemma,
	$\lim_{\varepsilon \to 0+}\|\widetilde{f}_{n,\varepsilon}'(\cdot,0;t)\|_{\cH^{\otimes n}}^2 = \|\widetilde{f}_{n}(\cdot,0;t)\|_{\cH^{\otimes n}}^2$.
	Similar to \eqref{norm-fn-X},
	\begin{align*}
		\Norm{\widetilde{f}_{n,\varepsilon}'(\cdot,0;t)}_{\cH^{\otimes n}}^2
		=\frac{1}{(n!)^2}\int_{(\bR^d)^n} \left[H_{n,\varepsilon}'(t,{\bf x})\right]^2 d{\bf x},
	\end{align*}
	where $H_{n,\varepsilon}'(t,{\bf x})=n! \int_{(\bR^d)^n} \prod_{k=1}^{n}K(x_k-y_k) \widetilde{f}_{n,\varepsilon}'(y_1,\ldots,y_n,0;t)d{\bf y}$.
	Similar to Lemma \ref{L:IntHn},
	\begin{align*}
		\int_{(\bR^d)^n} \left[ \int_0^{\infty} \e^{-t}H_{n,\varepsilon}'(t,{\bf x}) dt\right]^2 d{\bf x}
		= \int_{(\bR^d)^n} |\cF L_{n,\varepsilon}'(\xi_1,\ldots,\xi_n)|^2 \mu(d\xi_1) \ldots \mu(d\xi_n),
	\end{align*}
	where
	$L_{n,\varepsilon}'(y_1,\ldots,y_n)=n!\int_0^{\infty}e^{-t} \widetilde{f}_{n,\varepsilon}'(y_1,\ldots,y_n,0;t)dt$ and
	\begin{align*}
		\cF L_{n,\varepsilon}'(\xi_1,\ldots,\xi_n)
		= \exp \Big(-\frac{\varepsilon}{2}\sum_{k=1}^{n}|\xi_k|^2 \Big) \sum_{\sigma \in \Sigma_n}
		  \prod_{k=1}^{n} \frac{1}{1+|\sum_{j=k}^n \xi_{\sigma(j)}|^2}.
	\end{align*}

	Denote
	\begin{align*}
		J_{n,\varepsilon}'(t,\tilde{t})
		=\int_{(\bR^d)^n} H_{n,\varepsilon}'(t,{\bf x})H_{n,\varepsilon}'(\tilde{t},{\bf x})d{\bf x}.
	\end{align*}
	With this notation,
	$\|\widetilde{f}_{n,\varepsilon}'(\cdot,0;t)\|_{\cH^{\otimes n}}^2 =\frac{1}{(n!)^2} J_{n,\varepsilon}'(t,t)$. By
	\eqref{ineq-e}, it follows that
	\begin{equation}
		\label{ineq-e2}
		J_{n,\varepsilon}'(t,t) \leq J_n(t,t) \quad \mbox{for any} \ \varepsilon>0,
	\end{equation}
	where $J_n(t,t)=(n!)^2 \|\widetilde{f}_n(\cdot,0;t)\|_{\cH^{\otimes n}}^2$.
	Let $\tau$ and $\widetilde{\tau}$ be independent exponential random variables of mean $1$.
	Then
	\begin{align*}
		\E[J_{n,\varepsilon}'(\tau,\widetilde{\tau})]&=\int_{(\bR^d)^n} \left[ \int_0^{\infty} \e^{-t}H_{n,\varepsilon}'(t,{\bf x}) dt\right]^2 d{\bf x}\\
		&=\int_{(\bR^d)^n} \exp \Big(-\varepsilon \sum_{k=1}^{n}|\xi_k|^2 \Big)
		\left[\sum_{\sigma \in \Sigma_n} \prod_{k=1}^{n} \frac{1}{1+|\sum_{j=k}^n \xi_{\sigma(j)}|^2}\right]^2\mu(d\xi_1) \ldots \mu(d\xi_n)\\
		&=:T_{n,\varepsilon}'.
	\end{align*}
	By the Cauchy-Schwarz inequality, \eqref{ineq-e2} and the scaling property of $J_n$,
	\begin{align*}
		J_{n,\varepsilon}'(t,\tilde{t})
		\leq J_{n,\varepsilon}'(t,t)^{1/2} J_{n,\varepsilon}'(\tilde{t},\tilde{t})^{1/2}
		\leq J_{n}(t,t)^{1/2}J_{n}(\tilde{t},\tilde{t})^{1/2}
		=t^{\frac{4-\alpha}{2}n} \tilde{t}^{\frac{4-\alpha}{2}n}J_{n}(1,1).
	\end{align*}
	Hence, $T_{n,\varepsilon}' =\E[J_{n,\varepsilon}'(\tau,\widetilde{\tau})]$, which is bounded from above by
	\begin{align*}
		\Gamma\left(\frac{4-\alpha}{2}n+1\right)^2J_{n}(1,1)
		=\Gamma\left(\frac{4-\alpha}{2}n+1\right)^2(n!)^2 \Norm{\widetilde{f}_n(\cdot,0;1)}_{\cH^{\otimes n}}^2.
	\end{align*}
	Relation \eqref{LB-Tn} follows since $\lim_{\varepsilon \to 0+}T_{n,\varepsilon}'=T_n$ by the dominated convergence theorem.

	\bigskip
	Under Assumption \ref{A:White} (resp. \ref{A:Critical}), one can carry out the same arguments as those in Case 1 (resp. Case 2) with $\alpha$ replaced by $d$ and
	$\mu(d\xi)$ by $\left(2\pi\right)^{-d}d\xi$.
\end{proof}
\bigskip

\begin{proof}[Proof of Theorem \ref{T:p2}]
	Provided that there is a $L^2(\Omega)$ solution $u(t,x)$, thanks to the scaling property \eqref{E:scaleTildeFn}, we should have
	\begin{align}
		\notag
		\E\left(|u(t,x)|^2\right)
		&=\sum_{n\geq 0}\theta^n n! \,\Norm{\widetilde{f}_n(\cdot,0;t)}_{\cH^{\otimes n}}^{2}
		=\sum_{n\geq 0}\theta^n n! \,t^{(4-\alpha)n} \Norm{\widetilde{f}_n(\cdot,0;1)}_{\cH^{\otimes n}}^{2}\\
		&=\sum_{n\geq 0}\theta^n \frac{t^{(4-\alpha)n}}{(n!)^{3-\alpha}} R_n\qquad
		\text{with $R_n:= (n!)^{4-\alpha} \Norm{\widetilde{f}_n(\cdot,0;1)}_{\cH^{\otimes n}}^{2} $}.
		\label{E_:p_2}
	\end{align}
  For (i), part (i) of Theorem \ref{T:iff} guarantees the existence of an $L^2(\Omega)$-solution for all $t>0$ and $x\in \bR^d$.
	We apply Lemma \ref{Rn-compare} with $x_n=\theta^n \frac{1}{(n!)^{3-\alpha}}$ and the above $R_n$.
	By \eqref{E:lim2}, we know that
	\begin{align}
		\label{E:R}
		\lim_{n \to \infty} \frac{1}{n}\log R_n
		= \log R, \quad \mbox{with} \ R
		:= \left(\frac{2}{4-\alpha}\right)^{4-\alpha} 2^{-\frac{\alpha}{2}} \cM^{\frac{4-\alpha}{2}}.
	\end{align}
	So, for deriving the asymptotic behaviour of $t^{-\beta}\log \E\left(|u(t,x)|^2\right)$ (for suitable $\beta>0$),
	we can replace $R_n$ by $R^n$. More precisely, if we can find $\beta>0$ such that
	\begin{equation}
		\label{E4:A-series}
		\lim_{t \to \infty}\frac{1}{t^{\beta}} \log \sum_{n \geq 0} \theta^n \frac{1}{(n!)^{3-\alpha}} R^n t^{(4-\alpha)n}
		= A
	\end{equation}
	then
	\begin{align*}
		\lim_{t \to \infty}\frac{1}{t^{\beta}} \log \E\left(|u(t,x)|^2\right)
		=\lim_{t \to \infty}\frac{1}{t^{\beta}} \log \sum_{n \geq 0} \theta^n \frac{1}{(n!)^{3-\alpha}} R_n t^{(4-\alpha)n}
		= A.
	\end{align*}
	To find the suitable value $\beta$, we use Lemma \ref{L:Gamma}. More precisely,
	\begin{align*}
		\frac{1}{(\theta R)^{1/(3-\alpha)}} \lim_{t \to \infty} \frac{1}{t^{(4-\alpha)/(3-\alpha)}} \log \sum_{n\geq 0}
		\frac{\theta^n R^n t^{(4-\alpha)n}}{(n!)^{3-\alpha}}
		& = \lim_{t \to \infty} \frac{1}{(\theta Rt^{4-\alpha})^{1/(3-\alpha)}} \log \sum_{n \geq 0} \frac{(\theta R t^{4-\alpha})^n}{(n!)^{3-\alpha}}\\
		& = \lim_{t \to \infty} \frac{1}{t^{1/(3-\alpha)}} \log \sum_{n \geq 0} \frac{t^n}{(n!)^{3-\alpha}}
		  = 3-\alpha.
	\end{align*}
	This shows that if we choose $\beta=\frac{4-\alpha}{3-\alpha}$ (see \eqref{E:beta}),
	then relation \eqref{E4:A-series} holds with $A=(3-\alpha)(\theta R)^{1/(3-\alpha)}$. Summarizing, we obtain:
	\begin{align*}
		\lim_{t \to \infty}\frac{1}{t^{\beta}} \log \E\left(|u(t,x)|^2\right) = (3-\alpha)(\theta R)^{1/(3-\alpha)}.
	\end{align*}
	The conclusion of part (i) follows using the definition of $R$ in \eqref{E:R}.
	Part (ii) can be proved in the same way as above with $\alpha$ and $\cM$ replaced by $d$ and $\cM(\delta_0)$, respectively.
	\bigskip

As for (iii), by the scaling property \eqref{E:scaleTildeFn},
	\begin{align*}
	\sum_{n\geq 0} \theta^n n! \Norm{\widetilde{f}_n(\cdot,x;t)}_{\cH^{\otimes n}}^2
		= \sum_{n\ge 0} \theta^n t^n R_n \quad \text{with $R_n=n! \Norm{\widetilde{f}_n(\cdot,0;1)}_{\cH^{\otimes n}}^{2}$.}
	\end{align*}
	By Lemma \ref{L:TwoLim}, $\lim_{n \to \infty} \frac{1}{n}\log R_n = \log R$ with $R = 2^{-1/2} \cM(\delta_0)^{1/2}$. By the Cauchy-Hadamard theorem, it follows that
	\begin{align*}
		\sum_{n\geq 0} \theta^n t^n R_n \quad
		\begin{cases}
			\quad <\infty & \text{if $\theta t<R^{-1}$,} \\
			\quad =\infty & \text{if $\theta t> R^{-1}$.}
		\end{cases}
	\end{align*}
	This proves part (iii) of Theorem \ref{T:p2}.
\end{proof}

\section{Upper bound}
\label{S:upper}
In this section, we use the $L^2(\Omega)$ asymptotics obtained in Theorem \ref{T:p2} and the hypercontractivity property given by Theorem \ref{T:Khoa} to prove the following theorem:
\begin{theorem}
	\label{T:Upper}
	(i) Under Assumption \ref{A:Main} with $0<\alpha<d\le 3$,
	it holds that
	\begin{align}
		\label{E:Upper}
		\limsup_{t_p\to \infty}t_p^{-\frac{4-\alpha}{3-\alpha}}\log \Norm{u(t,x)}_p
		&\le \theta^{\frac{1}{3-\alpha}}  \left(\frac{1}{2}\right)^{\frac{\alpha}{2(3-\alpha)}} \frac{3-\alpha}{2} \left(\frac{2\cM^{1/2}}{4-\alpha}\right)^{\frac{4-\alpha}{3-\alpha}}.
	\end{align}
	where $t_p$ and $\cM$ are defined in \eqref{E:tp} and \eqref{E:cM}, respectively. \\
	(ii) Inequality \eqref{E:Upper} holds under Assumption \ref{A:White} with $\alpha$ and $\cM$ replaced by $d$ and $\cM(\delta_0)$, respectively.\\
	(iii) Under Assumption \ref{A:Critical}, for any $p\ge 2$, equation \eqref{E:SWE} has a solution in $L^p(\Omega)$ for all $t\in(0,T_p)$ and $x\in\R^d$,
	where $T_p$ is defined in \eqref{E:Tp}.
	Moreover, $T_p \ge T_p'$, where $T_p'$ is defined in \eqref{E:Tp'}.
\end{theorem}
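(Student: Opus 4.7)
The strategy is to reduce the $L^p$ bound to the already-established $L^2$ asymptotics (Theorem~\ref{T:p2}) by invoking the hypercontractivity property of Wiener chaos, i.e.\ $\|I_n(f)\|_p \le (p-1)^{n/2}\|I_n(f)\|_2$ for $p\ge 2$ (Theorem~\ref{T:Khoa}). Combining this with Minkowski's inequality applied to the chaos expansion \eqref{E:Series} produces
\begin{align*}
\|u(t,x)\|_p \;\le\; \sum_{n\ge 0} \theta^{n/2}(p-1)^{n/2}\sqrt{n!}\;\|\widetilde{f}_n(\cdot,x;t)\|_{\cH^{\otimes n}}.
\end{align*}
Invoking the scaling identity \eqref{E:scaleTildeFn}, writing $R_n := (n!)^{4-\alpha}\|\widetilde{f}_n(\cdot,0;1)\|_{\cH^{\otimes n}}^{2}$, and using the change of variable $t_p=(p-1)^{1/(4-\alpha)}t$, this rearranges to
\begin{align*}
\|u(t,x)\|_p \;\le\; \sum_{n\ge 0} \theta^{n/2}\, t_p^{(4-\alpha)n/2}\,\frac{\sqrt{R_n}}{(n!)^{(3-\alpha)/2}}.
\end{align*}

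For parts (i) and (ii) I would then appeal to Lemma~\ref{L:TwoLim}, which gives $n^{-1}\log R_n \to \log R$ with $R=(2/(4-\alpha))^{4-\alpha}2^{-\alpha/2}\cM^{(4-\alpha)/2}$ (respectively with $\alpha,\cM$ replaced by $d,\cM(\delta_0)$ under Assumption~\ref{A:White}). Mirroring the step used in the proof of Theorem~\ref{T:p2}, I would replace $R_n$ by $R^n$ up to lower-order corrections (via a comparison lemma in the spirit of Lemma~\ref{Rn-compare}), reducing the problem to the asymptotic of $\sum_{n\ge 0} s^n/(n!)^{(3-\alpha)/2}$ with $s=(\theta R)^{1/2}t_p^{(4-\alpha)/2}$. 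Applying Lemma~\ref{L:Gamma} with exponent $a=(3-\alpha)/2$ yields $\log \sum_n s^n/(n!)^a \sim a\, s^{1/a}$, and substituting $s$ back gives the bound $\tfrac{3-\alpha}{2}(\theta R)^{1/(3-\alpha)} t_p^{(4-\alpha)/(3-\alpha)}$. Unpacking the explicit form of $R$ recovers exactly the right-hand side of \eqref{E:Upper}.

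For part (iii), Assumption~\ref{A:Critical} makes the critical exponent $3-\alpha = 0$, so the denominator $(n!)^{(3-\alpha)/2}$ collapses to $1$ and the bounding series becomes the geometric-type series $\sum_{n\ge 0}(\theta(p-1)t)^{n/2}\sqrt{R_n}$. By Cauchy--Hadamard and the limit $R_n^{1/n}\to R = 2^{-1/2}\cM(\delta_0)^{1/2}$ from Lemma~\ref{L:TwoLim}, convergence holds precisely when $\theta(p-1)tR<1$, i.e.\ $t<1/(\theta(p-1)R)=T_p$, giving the $L^p$-existence on $(0,T_p)\times\R^d$. The comparison $T_p\ge T_p'$ then follows because the sharp exponential rate $R$ cannot exceed any pointwise upper bound for $R_n^{1/n}$: in the critical case Lemma~\ref{L:LpalaceNorm} gives $R_n\le (2(2\pi)^{-3}C_{\text{Leb}}')^n=(1/(4\pi))^n$, so $R\le 1/(4\pi)$ and $T_p=1/(\theta(p-1)R)\ge 4\pi/(\theta(p-1))=T_p'$.

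\textbf{Main obstacle.} The delicate step is the passage from $R_n$ to $R^n$ in parts (i)--(ii): Lemma~\ref{L:TwoLim} gives only logarithmic convergence, whereas the series asymptotic requires uniform control. The standard fix is to split the series at a large threshold $N$, using $R_n\le (R+\varepsilon)^n$ for $n\ge N$ together with crude boundedness of the finitely many head terms; after dividing by $t_p^{(4-\alpha)/(3-\alpha)}$ and letting $t_p\to\infty$ and then $\varepsilon\to 0$, the head contribution vanishes and the tail produces the stated rate. In the critical part (iii), one must use the full strength of the asymptotic $R_n^{1/n}\to R$ (rather than the crude bound from Lemma~\ref{L:LpalaceNorm}) in order to obtain the sharp radius $T_p$ instead of $T_p'$.
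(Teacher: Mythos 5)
Your proposal is correct, but it follows a genuinely different route from the paper's. The paper proves parts (i)--(iii) in one stroke via L\^e's moment comparison theorem (Theorem \ref{T:Khoa}): applying Mehler's formula and hypercontractivity of the Ornstein--Uhlenbeck semigroup to the \emph{whole} solution gives $\Norm{u(t,x)}_p\le \Norm{u_{(p-1)\theta}(t,x)}_2$, which by the scaling \eqref{E:scaleTildeFn} equals $\Norm{u(t_p,x)}_2$; the sharp asymptotics and the radius $T_p$ then follow by simply quoting Theorem \ref{T:p2}. You instead use chaos-by-chaos hypercontractivity $\Norm{I_n(f)}_p\le (p-1)^{n/2}\Norm{I_n(f)}_2$ plus Minkowski's inequality (this is exactly the bound \eqref{E_:pmoment} from the proof of Theorem \ref{T:iff}; note it is \emph{not} what Theorem \ref{T:Khoa} states, so your attribution is slightly off, though the inequality itself is standard). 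This yields an $\ell^1$-type bound $\sum_n c_n$ rather than the paper's $\ell^2$-type bound $(\sum_n c_n^2)^{1/2}$, which is a priori weaker; it nevertheless produces the identical constant because for these Mittag--Leffler-type series $\log\sum_n c_n$ and $\tfrac12\log\sum_n c_n^2$ share the same leading order (your application of Lemma \ref{L:Gamma} with $\gamma=(3-\alpha)/2$ in place of $\gamma=3-\alpha$, together with Lemmas \ref{a-b-lemma}/\ref{Rn-compare}, checks out). What the paper's route buys is economy — parts (i)--(iii) reduce entirely to Theorem \ref{T:p2} — at the cost of the Mehler/OU machinery and the uniqueness argument identifying $T_\tau u_\theta=u_{\e^{-2\tau}\theta}$; your route avoids that machinery but must redo the series asymptotics. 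For the claim $T_p\ge T_p'$ in part (iii) you also diverge: the paper proves $\cM(\delta_0)\le 1/(2\pi^4)$ via the optimal Sobolev constant (Lemma \ref{L:Tp}), whereas you observe that the exact rate $R=2^{-1/2}\cM(\delta_0)^{1/2}=\lim_n R_n^{1/n}$ is dominated by the termwise bound $R_n\le (1/(4\pi))^n$ from Lemma \ref{L:LpalaceNorm}, giving $R\le 1/(4\pi)$ and hence $T_p=1/(\theta(p-1)R)\ge 4\pi/(\theta(p-1))=T_p'$ directly; this is more elementary and equally valid (it yields the weaker bound $\cM(\delta_0)\le 1/(8\pi^2)$, which suffices here).
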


\begin{proof}
	(i) For any $\lambda>0$, we denote by $u_{\lambda}$ the solution of equation \eqref{E:SWE} with $\theta$ replaced by $\lambda$.
	Denote $u=u_{\theta}$.
	By L\^e's moment comparison theorem given by Theorem \ref{T:Khoa},
	\begin{align*}
		\Norm{u(t,x)}_{p} \leq \Norm{u_{(p-1)\theta}(t,x)}_2,\quad\text{for all $p\ge 2$.}
	\end{align*}
	By the scaling property \eqref{E:scaleTildeFn},
	\begin{align*}
		\Norm{u_{(p-1)\theta}(t,x)}_{2}^2
		& = \sum_{n\geq 0} n! \, \theta^n (p-1)^n \Norm{\widetilde{f}_n(\cdot,x;t)}_{\cH^{\otimes n}}^2
		= \sum_{n\geq 0} n! \, \theta^n \Norm{\widetilde{f}_n\left(\cdot,x;(p-1)^{\frac{1}{4-\alpha}}t\right)}_{\cH^{\otimes n}}^2\\
		& = \Norm{u\left((p-1)^{1/(4-\alpha)}t,x\right)}_2^2.
	\end{align*}
	This leads to the following important moment inequality:
	\begin{equation}
		\label{E:p-2}
		\Norm{u(t,x)}_{p} \leq \Norm{u\left( (p-1)^{1/(4-\alpha)}t,x\right)}_2 =  \Norm{u\left( t_p,x\right)}_2,
	\end{equation}
	which implies that
	\begin{align*}
		\limsup_{t_p\to \infty}t_p^{-\frac{4-\alpha}{3-\alpha}}\log \Norm{u(t,x)}_p
		&\leq \limsup_{t_p\to \infty}t_p^{-\frac{4-\alpha}{3-\alpha}}\log  \Norm{u\left(t_p,0\right)}_2.
	\end{align*}
	Then an application of Theorem \ref{T:p2} proves \eqref{E:Upper}.

	Part (ii) can be proved in the same way as above with $\alpha$ and $\cM$ replaced by $d$ and $\cM(\delta_0)$. We will not repeat this here.

	To prove part (iii), we note that relation \eqref{E:p-2} continues to hold under Assumption C, with $\alpha=d=3$ and $t_p=(p-1)t$. The right-hand side of \eqref{E:p-2} is equal to the square root of the series in \eqref{E:p2iii} with $t$ replaced by $t_p$. We note that $t_p< T_2(\theta)$ is equivalent to $t<T_p(\theta)$, recalling the definition \eqref{E:Tp} of $T_p(\theta)$.
	Finally, the property $T_p\ge T_p'$ is proved in Lemma \ref{L:Tp} below.
\end{proof}

\begin{lemma}
	\label{L:Tp}
	When $d=3$, it holds that $\cM(\delta_0)\le 1/(2\pi^4)$. As a consequence,
	for any $\theta>0$ and $p>1$,
	\begin{align}
		\label{E:TpTp'}
		T_p
		:= \frac{\sqrt{2}}{\theta (p-1)\sqrt{\cM(\delta_0)}}
		\ge \frac{2\pi^2}{\theta(p-1)}
		\ge \frac{4\pi}{\theta(p-1)}
		= :T_p'.
	\end{align}
\end{lemma}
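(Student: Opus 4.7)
The plan splits into a quick algebraic chain and one substantive bound on $\cM(\delta_0)$. Granting $\cM(\delta_0)\le 1/(2\pi^4)$, the definition of $T_p$ gives
\[
T_p=\frac{\sqrt 2}{\theta(p-1)\sqrt{\cM(\delta_0)}}\ \ge\ \frac{\sqrt 2\,\sqrt{2\pi^4}}{\theta(p-1)}\ =\ \frac{2\pi^2}{\theta(p-1)},
\]
and the remaining $2\pi^2\ge 4\pi$ is nothing but $\pi\ge 2$. Everything therefore reduces to proving the bound on $\cM(\delta_0)$.

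For that bound, I would start from the identity \eqref{E:rhoM} in the white-noise form (i.e.\ with $\alpha$ replaced by $d=3$), which gives $\rho=2^{-3/2}\sqrt{\cM(\delta_0)}$ and hence reduces the claim to $\rho\le 1/(4\pi^2)$. With $\mu(d\xi)=(2\pi)^{-3}d\xi$ in \eqref{E:rho}, the inner function $F(\xi)$ factors as $F=(\psi h)*\widetilde{\psi h}$ with $\psi(\eta):=(1+|\eta|^2)^{-1/2}$ and $\widetilde\varphi(x):=\varphi(-x)$; for real $h$ one has $\cF F=|\cF(\psi h)|^2$, so Plancherel yields $\int_{\bR^3}F^2\,d\xi=(2\pi)^{-3}\|\cF(\psi h)\|_{L^4}^4$. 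I would then combine the sharp Hausdorff--Young inequality on $\bR^3$ at $p=4/3$,
\[
\|\cF(\psi h)\|_{L^4}\ \le\ (2\pi)^{3/4}A^3\|\psi h\|_{L^{4/3}},\qquad A^2=A_{4/3}^2=\frac{2}{3^{3/4}}\quad\text{(Babenko--Beckner)},
\]
with H\"older at $(3/2,3)$ on the mixed norm:
\[
\|\psi h\|_{L^{4/3}}^{4/3}=\int_{\bR^3}\frac{|h|^{4/3}}{(1+|\eta|^2)^{2/3}}\,d\eta\ \le\ \|h\|_{L^2}^{4/3}\Big(\int_{\bR^3}\frac{d\eta}{(1+|\eta|^2)^2}\Big)^{1/3}\ =\ \pi^{2/3},
\]
using the identity $\int_{\bR^3}(1+|\eta|^2)^{-2}d\eta=\pi^2$ recorded in \eqref{int}. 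Stringing these estimates together gives $\rho\le A^{12}/(8\pi)=8/(81\sqrt 3\,\pi)$, and the target $\rho\le 1/(4\pi^2)$ then reduces to the numerical inequality $32\pi\le 81\sqrt 3$, which is true ($32\pi\approx 100.5$, $81\sqrt 3\approx 140.3$).

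The delicate point is the appeal to the \emph{sharp} constant in Hausdorff--Young: replacing $A_{4/3}$ by $1$ (the classical version) yields only $\rho\le 1/(8\pi)$ and so $\cM(\delta_0)\le 1/(8\pi^2)$, which is already enough to conclude $T_p\ge T_p'$ in one step but does not reach the intermediate $2\pi^2/(\theta(p-1))$ asserted in the chain \eqref{E:TpTp'}. A clean alternative bypassing Babenko--Beckner is to reinterpret $\rho$ as the sharp constant in the Sobolev embedding $H^1(\bR^3)\hookrightarrow L^4(\bR^3)$, after which $\rho\le 1/(4\pi^2)$ becomes equivalent to the mass bound $\|Q\|_{L^2}^2\ge \pi^2$ for the positive radial ground state $Q$ of $-\Delta Q+Q=Q^3$ on $\bR^3$; this is arguably the natural origin of the $\pi$ appearing in the statement.
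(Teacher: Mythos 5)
Your proposal is correct, but it reaches the bound $\cM(\delta_0)\le 1/(2\pi^4)$ by a genuinely different route. The paper stays entirely on the variational side: writing $\cM(\delta_0)=\sup_{g\in\cF_2}\{\|g\|_{L^4}^2-\tfrac12\|\nabla g\|_{L^2}^2\}$, it interpolates $\|g\|_{L^4}^2\le\|g\|_{L^6}^{3/2}$ (H\"older, using $\|g\|_{L^2}=1$), invokes the sharp Sobolev inequality \eqref{E:SobOptA}, and finishes with a one-variable optimization of $y\mapsto A^{3/2}y^3-\tfrac12 y^4$, which lands exactly on $1/(2\pi^4)$. You instead pass to the Fourier-side quantity $\rho$ via the identity \eqref{E:rhoM} (so your argument additionally leans on the Chen/Bass--Chen--Rosen identification $\rho=\cM(\gamma,2)^{(4-\alpha)/2}$ in its white-noise form), and then control $\rho$ by Plancherel, the \emph{sharp} Hausdorff--Young inequality at exponent $4/3$, and H\"older against $\int_{\bR^3}(1+|\eta|^2)^{-2}\,d\eta=\pi^2$; I checked the constants ($A^{12}=64/(81\sqrt3)$, $\rho\le 8/(81\sqrt3\,\pi)$, and $32\pi\le 81\sqrt3$) and they are right, and in fact your bound $\cM(\delta_0)\le 512/(19683\pi^2)$ is numerically a bit stronger than the paper's $1/(2\pi^4)$. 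The trade-offs: the paper's argument is more elementary and self-contained (sharp Sobolev plus calculus, no appeal to Babenko--Beckner or to \eqref{E:rhoM}), while yours makes transparent why the classical Hausdorff--Young constant already suffices for the final conclusion $T_p\ge T_p'$, as you observe. Your closing remark relating $\rho$ to the sharp $H^1(\bR^3)\hookrightarrow L^4(\bR^3)$ embedding and the ground-state mass of $-\Delta Q+Q=Q^3$ is a plausible and appealing heuristic, but as stated it is not a proof and would need the precise Gagliardo--Nirenberg/ground-state identity to be pinned down; since your main argument does not depend on it, this does not affect correctness.
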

\begin{proof}
	Let us find the optimal constant $C>0$ such that $\cM(\delta_0) \le C$,
	or equivalently,
	\begin{align*}
		\left(\int_{\R^3}g^4(x)dx\right)^{1/2}
		\le C +\frac{1}{2}\int_{\R^3} \left|\nabla g(x)\right|^2 dx, \quad\text{for all $g\in\cF_d$.}
	\end{align*}
	Fix an arbitrary $g\in \cF_d$. By H\"older inequality,
	\begin{align*}
		\left(\int_{\R^3}g^4(x)dx\right)^{1/2}
		\le \left(\int_{\R^3}g(x)^2dx \right)^{1/4} \left(\int_{\R^3}g(x)^6dx \right)^{1/4}
		=   \Norm{g}_{L^6(\R^3)}^{3/2}.
	\end{align*}
	By the Sobolev inequality with the optimal constant (see, e.g., \cite{DPD02} for references  therein),
	\begin{align}
		\label{E:SobOptA}
		\Norm{g}_{L^6(\R^3)} \le A \Norm{\nabla g}_{L^2(\R^3)}, \quad \text{with $A=3^{-1/2}\left(\frac{2}{\pi}\right)^{2/3}$}.
	\end{align}
	By setting $y = \Norm{\nabla g}_{L^2(\R^3)}^{1/2}$, the problem reduces to find the constant $C$ such that
	\begin{align}
		\label{E:varphiy}
		\varphi(y):= A^{3/2} y^3 - \frac{1}{2} y^4 \le C \quad\text{for all $y\ge 0$.}
	\end{align}
	Elementary computations show that the critical points of $\varphi$ are $y_1=y_2=0$ and $y_3=(3/2) A^{3/2}$.
	Hence, inequality \eqref{E:varphiy} is equivalent to $\varphi(0)\ge 0$ and $\varphi\left(y_3\right)\ge 0$, from which one finds the value of $C$, namely,
	$C=\varphi\left(y_3\right)=(27A^6)/32=1/(2\pi^4)$.
This proves the first inequality in \eqref{E:TpTp'}.
	The second inequality in \eqref{E:TpTp'} clear.
\end{proof}
\section{Lower bound}
\label{S:lower}

The goal of this section is to prove the following theorem:

\begin{theorem}
	\label{T:Lowbound}
	Under Assumption \ref{A:Main} with $0<\alpha<d\le 3$, it holds that
	\begin{equation}
		\label{E:Low}
		\liminf_{t_p\to \infty} t_p^{-\frac{4-\alpha}{3-\alpha}} \log \Norm{u(t,x)}_p
		\geq
		\theta^{\frac{1}{3-\alpha}}
		\left(\frac{1}{2}\right)^{\frac{\alpha}{2(3-\alpha)}}\frac{3-\alpha}{2}
		\left(\frac{2\cM^{1/2}}{4-\alpha}\right)^{\frac{4-\alpha}{3-\alpha}},
	\end{equation}
	where $t_p$ and $\cM$ are defined in \eqref{E:tp} and \eqref{E:cM}, respectively.
	Relation \eqref{E:Low} also holds under Assumption \ref{A:White}, with $\alpha$ and $\cM$ replaced by $d$ and $\cM(\delta_0)$, respectively.
\end{theorem}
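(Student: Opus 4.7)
The naive application of Jensen's inequality $\|u(t,x)\|_p\ge\|u(t,x)\|_2$ together with Theorem \ref{T:p2} only yields a lower bound of order $t^{(4-\alpha)/(3-\alpha)}$, missing the factor $(p-1)^{1/(3-\alpha)}$ built into $t_p^{(4-\alpha)/(3-\alpha)}=(p-1)^{1/(3-\alpha)}t^{(4-\alpha)/(3-\alpha)}$. Consequently, the lower bound cannot reduce to the $p=2$ estimate at time $t$; it must instead capture the full $t_p$-scaling already attained by the upper bound, which came from L\^e's comparison $\|u_\theta(t,x)\|_p\le\|u_\theta(t_p,x)\|_2$.

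The plan is to analyse the Wiener chaos expansion directly. By Lemma \ref{L:TwoLim}, $n!\,\|\tilde f_n(\cdot,x;1)\|_{\cH^{\otimes n}}^2$ is comparable to $R^n/(n!)^{3-\alpha}$ with $R=(2/(4-\alpha))^{4-\alpha}\,2^{-\alpha/2}\cM^{(4-\alpha)/2}$, so after scaling the series
\[
S(t,p):=\sum_{n\ge 0}(p-1)^n\theta^n\, n!\,\|\tilde f_n(\cdot,x;t)\|_{\cH^{\otimes n}}^2
\]
has a Gaussian-type saddle-point at $n^{\ast}\sim\bigl((p-1)\theta R\,t^{4-\alpha}\bigr)^{1/(3-\alpha)}$, and evaluating $\log S(t,p)$ at $n^{\ast}$ via Stirling and \eqref{E:StirlingRate} reproduces the target constant times $t_p^{(4-\alpha)/(3-\alpha)}$ (indeed this value equals $\log \|u_\theta(t_p,x)\|_2^2$, by the identity used in the upper bound). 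It then remains to show that $\|u_\theta(t,x)\|_p$ is asymptotically of the same order. For this I would establish a chaos-wise reverse hypercontractivity estimate $\|I_n(g)\|_p\gtrsim (p-1)^{n/2}\|I_n(g)\|_2$ for kernels $g$ sufficiently close to a rank-one tensor, by exploiting the variational formula \eqref{E:cM}: a near-maximiser $g_{\ast}\in\cF_2$ singles out a rank-one profile $g_{\ast}^{\otimes n}$ against which $\tilde f_n$ aligns in the regime $n\approx n^{\ast}$, so that $I_n(\tilde f_n)$ reduces up to controlled remainders to a Hermite polynomial of the single Gaussian $W(g_{\ast})$, a family that saturates Nelson's bound.

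To pass from individual chaoses to the full series, I would combine a Paley--Zygmund argument applied to the partial sum localised in an $O(\sqrt{n^{\ast}})$-window around $n^{\ast}$ with an estimate showing that the remaining chaoses do not interfere on the exponential scale. Summing then yields $\log\|u(t,x)\|_p\ge(1+o(1))\,\frac{3-\alpha}{2}(\theta R)^{1/(3-\alpha)}\,t_p^{(4-\alpha)/(3-\alpha)}$, and dividing by $t_p^{(4-\alpha)/(3-\alpha)}$ produces exactly the constant on the right-hand side of \eqref{E:Low}. The white-noise case (Assumption \ref{A:White}) follows by the same argument with the substitutions $\alpha\leftrightarrow d$ and $\cM\leftrightarrow\cM(\delta_0)$.

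The main obstacle is the chaos-wise reverse hypercontractivity step: Nelson's inequality $\|I_n(g)\|_p\le(p-1)^{n/2}\|I_n(g)\|_2$ is sharp only as an upper bound in general, and a matching lower bound requires structural information about $g$. The saving feature here is the specific form of $\tilde f_n$ coming from the nonnegative fundamental solution $G$ through the scaling \eqref{E:scaleFG}, together with the variational characterisation of $\cM$; these force the symmetrised kernel to be approximately rank-one on the relevant Wiener chaoses, reducing the $L^p(\Omega)$-computation to the well-understood moment asymptotics of Hermite polynomials of a single standard Gaussian.
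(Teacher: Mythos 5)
Your diagnosis of the difficulty is right (Jensen's $\|u\|_p\ge\|u\|_2$ loses the $(p-1)^{1/(3-\alpha)}$ factor, and the dominant chaos sits at $n^\ast\sim((p-1)\theta R t^{4-\alpha})^{1/(3-\alpha)}$), but the two steps that carry your argument are asserted rather than proved, and neither is close to routine. First, the ``chaos-wise reverse hypercontractivity'' $\|I_n(g)\|_p\gtrsim(p-1)^{n/2}\|I_n(g)\|_2$ requires $\widetilde f_n$ to be close to a rank-one tensor in a quantitative sense. The only alignment actually available is on the exponential scale: the statement that the projection $\langle\widetilde f_n,f^{\otimes n}\rangle_{\cH^{\otimes n}}$ onto the best unit rank-one tensor grows like $\rho^{n/2}$ while $\|\widetilde f_n\|_{\cH^{\otimes n}}^2$ grows like $\rho^n$ (i.e.\ Cauchy--Schwarz is asymptotically saturated) is precisely the identity $\sup_{f}\rho(\cF f)=\rho^{1/2}$, and proving it already requires the Bass--Chen--Rosen large-deviation machinery. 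Exponential-scale alignment does not by itself control the remainder $\widetilde f_n-c_nf^{\otimes n}$ in $L^p(\Omega)$, so the claim that $I_n(\widetilde f_n)$ ``reduces up to controlled remainders to a Hermite polynomial of $W(g_\ast)$'' is a gap, not a step. Second, the passage from individual chaoses to the series is unjustified: distinct chaoses are orthogonal in $L^2(\Omega)$ but not in $L^p(\Omega)$, so you cannot lower-bound $\|\sum_n\theta^{n/2}I_n(f_n)\|_p$ by the contribution of an $O(\sqrt{n^\ast})$-window without a positivity or projection device, and Paley--Zygmund (which compares two moments of the \emph{same} random variable) does not manufacture the $(p-1)^{n/2}$ gain you need. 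Finally, your proposal never deploys the one structural fact that makes a lower bound possible at all in $d\le 3$, namely the nonnegativity of the wave kernel $G$.

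The paper's proof packages both of your missing steps into a single clean device: it pairs $u(t,0)$ with the Wick exponential $Z_f=\exp(W(f)-\tfrac12\|f\|_{\cH}^2)$ and applies H\"older, giving $\|u(t,0)\|_p\ge \e^{-\frac12(q-1)\|f\|_\cH^2}\,|\sum_n\theta^{n/2}\langle\widetilde f_n(\cdot,0;t),f^{\otimes n}\rangle_{\cH^{\otimes n}}|$ (Proposition \ref{LB-1}). This replaces your reverse hypercontractivity by a rank-one projection $W_n(t,\cF f)=\langle\widetilde f_n,f^{\otimes n}\rangle_{\cH^{\otimes n}}$, and it replaces your windowing/Paley--Zygmund step by restricting $f$ to $\cH_+$ so that, thanks to $G\ge0$ for $d\le3$, every term $W_n(t,\cF f)=U_n(t,f)$ is nonnegative and the series may be bounded below by its single term $n=n_t\approx c\,t_p^{\beta}$. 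The $t_p$-scaling is then recovered by testing against the time-dilated $f_\tau$ with $\tau=(p-1)t$ (Lemma \ref{scaling-ft} and Proposition \ref{P:LB}), the per-chaos exponential rate $\rho^{1/2}=2^{-\alpha/4}\cM^{(4-\alpha)/4}$ comes from Theorem \ref{liminf-EW} via \cite{BCR09}, and the final constant is obtained by optimizing over $a=\|f\|_\cH$ and over the window location $c$ (Lemma \ref{L:AsyU}). If you want to salvage your chaos-by-chaos viewpoint, the workable version of your step (a) is to condition $I_n(\widetilde f_n)$ on $\sigma(W(f))$ (a contraction in $L^p$), which lands you exactly on the projections $\langle\widetilde f_n,f^{\otimes n}\rangle$ the paper works with --- at which point you are reconstructing the paper's argument.
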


\bigskip

This theorem will be proved at the end of this section.
We need first introduce some notation and prove some auxiliary results.
For any $\phi \in L_{\bC}^2(\mu)$, let

\begin{align}
	\label{E:Wnt}
	W_n(t,\phi):=\int_{[0,t]_<^n} \int_{(\bR^d)^n} \prod_{k=1}^n \phi(\xi_k) \prod_{k=1}^{n} \cF G(s_k-s_{k-1},\cdot)(\xi_k+\ldots+\xi_n) \mu(d\xi_1) \ldots \mu(d\xi_n) d{\bf s},
\end{align}
with $s_0=0$. We first give conditions under which $W_n(t,\phi)$ is well defined:

\begin{lemma}
	\label{L:LaplaceW}
	If the measure $\mu$ satisfies the first relation in \eqref{E:alpha<4}, then  $W_n(t,\phi)$
	is well defined for any $d\ge 1$, $t>0$, and $\phi\in L_{\bC}^2(\mu)$. Moreover,
	\begin{align}
		\label{E:LaplaceW}
		\int_0^{\infty}\e^{-t}W_n(t,\phi)dt
		= \int_{(\bR^d)^n} \prod_{k=1}^{n}\phi(\xi_k) \prod_{k=1}^{n}
		\frac{1}{1+|\xi_{k}+\ldots+\xi_{n}|^2} \mu(d\xi_1)\ldots \mu(d\xi_n).
	\end{align}
\end{lemma}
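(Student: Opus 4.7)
The plan is to compute the Laplace transform $\int_0^\infty e^{-t}W_n(t,\phi)\,dt$ directly via Fubini and an elementary change of variables, first verifying that the claimed right-hand side and all underlying integrals are finite.

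Cauchy-Schwarz on $L^2(\mu^n)$ immediately bounds the right-hand side of \eqref{E:LaplaceW} by $\|\phi\|_{L^2(\mu)}^n \cdot \bigl(\int_{(\bR^d)^n}\prod_{k=1}^n(1+|\eta_k|^2)^{-2}\mu(d\xi_1)\cdots\mu(d\xi_n)\bigr)^{1/2}$, where $\eta_k:=\xi_k+\cdots+\xi_n$. The last factor is at most $(C_\mu')^{n/2}$: integrating successively over $\xi_1,\xi_2,\ldots,\xi_n$ and noting that $\eta_k$ depends only on $\xi_k,\ldots,\xi_n$, each step contributes at most $\sup_\eta\int_{\bR^d}(1+|\xi+\eta|^2)^{-2}\mu(d\xi)=C_\mu'$, the sup-equals-value-at-zero identity being Lemma~4.1 of \cite{balan-song17} or Lemma~3.1 of \cite{xchen19}.

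For the Laplace identity, introduce the bijective substitution $u_k=s_k-s_{k-1}$ for $k=1,\ldots,n$ and $u_{n+1}=t-s_n$, which maps $\{(t,\bs):t>0,\,0<s_1<\cdots<s_n<t\}$ onto $(0,\infty)^{n+1}$ with unit Jacobian and satisfies $e^{-t}=\prod_{k=1}^{n+1}e^{-u_k}$. After applying Fubini (justified below) and this substitution the time integral factors completely:
\begin{align*}
\int_0^\infty e^{-t}W_n(t,\phi)\,dt
=\int_{(\bR^d)^n}\prod_{k=1}^n\phi(\xi_k)\prod_{k=1}^n\!\left(\int_0^\infty\! e^{-u}\frac{\sin(u|\eta_k|)}{|\eta_k|}\,du\right)\!\!\left(\int_0^\infty\! e^{-u_{n+1}}du_{n+1}\right)\mu(d\xi_1)\cdots\mu(d\xi_n).
\end{align*}
Using $\int_0^\infty e^{-u_{n+1}}du_{n+1}=1$ and the elementary identity $\int_0^\infty e^{-u}\sin(u|\eta|)/|\eta|\,du=1/(1+|\eta|^2)$ recovers exactly the right-hand side of \eqref{E:LaplaceW}.

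The main obstacle is well-definedness of $W_n(t,\phi)$ for general $\phi\in L^2_\bC(\mu)$, together with the parallel justification of Fubini. Dalang's condition $\int(1+|\xi|^2)^{-1}\mu(d\xi)<\infty$ may fail under Assumption~A (e.g.\ when $\alpha\ge 2$), so the absolute-value version of the integrand is not jointly $\mu^n\otimes d\bs\otimes dt$-integrable: once absolute values are taken, the single-variable Laplace transform $\int_0^\infty e^{-u}|\sin(u|\eta|)|/|\eta|\,du$ decays only like $1/|\eta|$ rather than $1/(1+|\eta|^2)$, and the iterated sup bound no longer closes. I would bypass this by approximating $\phi$ by a sequence $\phi_N$ of bounded and compactly supported functions; for each $\phi_N$ the integrand in \eqref{E:Wnt} is absolutely integrable (since $|\cF G(s,\cdot)(\eta)|\le s\le t$ and the $\xi$-support is compact), so $W_n(t,\phi_N)$ is a classical integral and Fubini gives the Laplace identity verbatim. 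The right-hand side is $L^2(\mu)$-continuous in $\phi$ by the Cauchy-Schwarz bound of the second paragraph, hence extends uniquely to all $\phi\in L^2_\bC(\mu)$; one then defines $W_n(t,\phi)$ through the Cauchy sequence $W_n(t,\phi_N)$, with uniqueness of Laplace transforms pinning down the pointwise-in-$t$ limit.
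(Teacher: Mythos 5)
Your core computation coincides with the paper's proof of Lemma \ref{L:LaplaceW}: the identity \eqref{E:LaplaceW} is obtained by the same change of variables $u_k=s_k-s_{k-1}$, $u_{n+1}=t-s_n$ together with $\int_0^\infty \e^{-u}\sin(u|\eta|)/|\eta|\,du=(1+|\eta|^2)^{-1}$, and the finiteness of the right-hand side is obtained by the same Cauchy--Schwarz argument followed by the iterated supremum bound of \eqref{E_:alpha<4}, citing the same lemmas of Balan--Song and Chen. Where you depart from the paper is in your last paragraph: the paper simply writes ``by Fubini's theorem,'' whereas you observe that joint absolute integrability of the integrand in $(t,\mathbf{s},\xi_1,\dots,\xi_n)$ is not available once Dalang's condition \eqref{E:Dalang} fails (e.g.\ $\alpha\ge 2$, or $d=2,3$ in the white-noise cases), since taking absolute values destroys the oscillation of $\cF G$ and leaves only $O(1/|\eta|)$ decay. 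This is a genuine point --- indeed Remark \ref{R:Dalang} of the paper itself stresses that the argument must exploit this oscillation --- and your density argument (bounded, compactly supported $\phi_N$, for which Fubini is immediate, followed by $L^2(\mu)$-continuity of the right-hand side) is a reasonable repair. Two caveats: first, your patch effectively \emph{redefines} $W_n(t,\phi)$ by continuous extension rather than proving that the integral \eqref{E:Wnt} converges as written, which is a slightly weaker reading of ``well defined'' than the lemma asserts; second, uniqueness of Laplace transforms does not by itself yield convergence of $W_n(t,\phi_N)$ for each \emph{fixed} $t$ --- you would need an additional ingredient (e.g.\ a Cauchy estimate on $W_n(t,\phi_N)-W_n(t,\phi_M)$ for fixed $t$, or the scaling $W_n(t,\phi)=t^{(4-\alpha)n/2}W_n(1,\phi)$ combined with an a.e.\ statement) to pin down the pointwise limit. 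Neither caveat affects the validity of \eqref{E:LaplaceW} itself, which is the part of the lemma actually used downstream.
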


\begin{proof}
	By Fubini's theorem, the left-hand side of \eqref{E:LaplaceW} is equal to
	\begin{align*}
		\int_{(\bR^{d})^n} \prod_{k=1}^{n}\phi(\xi_k)  \int_0^{\infty}\e^{-t}
		\int_{[0,t]_{<}^{n}} \prod_{k=1}^{n} \cF
		G(s_k-s_{k-1},\cdot)\left(\xi_k+\ldots+\xi_n\right) d{\bf s} dt
		\mu(d\xi_1) \ldots \mu(d\xi_n),
	\end{align*}
	which is equal to, by the change of variables $t-s_n=u$ and $s_k-s_{k-1}=u_k$ for $k=1,\ldots,n$,
	\begin{align*}
		\int_{(\bR^{d})^n} \prod_{k=1}^{n}\phi(\xi_k) \left( \int_{0}^{\infty} \e^{- u}du\right)
		\prod_{k=1}^{n}\left( \int_0^{\infty}\e^{-u_k} \cF G(u_k,\cdot)\left(\xi_k+\ldots+\xi_n\right)du_k\right)
		\mu(d\xi_1) \ldots \mu(d\xi_n).
	\end{align*}
 	Noticing that
	$\int_0^{\infty}\e^{-t} \cF G(t,\cdot)(\xi)dt=\int_0^{\infty}\e^{-t}
	\frac{\sin(t|\xi|)}{|\xi|}dt=1/(1+|\xi|^2)$ for all $\xi\in \mathbb{C}^d$, the above quantity is equal to
	\begin{align*}
		\int_{(\bR^d)^n} \prod_{k=1}^{n}\phi(\xi_k) \prod_{k=1}^{n}
		\frac{1}{1+|\xi_k+\ldots+\xi_n|^2} \mu(d\xi_1)\ldots \mu(d\xi_n).
	\end{align*}
	Finally, one can apply the Cauchy-Schwarz inequality and then use the same arguments as those leading to \eqref{E_:alpha<4}
	to see that the above quantity is finite.
\end{proof}

Our interest in the quantity $W_n(t,\phi)$ stems from the following proposition.

\begin{proposition}
	\label{LB-1}
	For any $f\in \cH$, $t>0$, and $p,q>1$ with $1/p+1/q=1$, it holds that
	\begin{align*}
		\Norm{u(t,0)}_p \geq
		\exp\left\{-\frac{1}{2}(q-1)\|f\|_{\cH}^2 \right\}
		\left|\sum_{n\geq 0}\theta^{n/2} W_n\left(t,\cF f\right)\right|.
	\end{align*}
	As a consequence, the series $\left|\sum_{n\geq 0} \theta^{n/2} W_n\left(t,\cF f\right)\right|$
	converges provided that $\Norm{u(t,0)}_p<\infty$, which is the case if Assumption \ref{A:Main} holds with
	$0<\alpha<d\leq 3$ or Assumption \ref{A:White} holds (see Theorem \ref{T:iff}).
\end{proposition}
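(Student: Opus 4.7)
The natural approach is a \emph{duality argument} using the Wick exponential. Given $f\in\cH$, set
$Y := \exp\left(W(f) - \tfrac{1}{2}\|f\|_{\cH}^2\right)$. A standard Gaussian computation yields $\|Y\|_q^q = \E\bigl[\exp\bigl(qW(f) - \tfrac{q}{2}\|f\|_{\cH}^2\bigr)\bigr] = \exp\bigl(\tfrac{q(q-1)}{2}\|f\|_{\cH}^2\bigr)$, so $\|Y\|_q = \exp\bigl(\tfrac{q-1}{2}\|f\|_{\cH}^2\bigr)$. Applying H\"older's inequality and rearranging gives
\begin{equation*}
	\|u(t,0)\|_p \;\geq\; \frac{|\E[u(t,0)\,Y]|}{\|Y\|_q} \;=\; \exp\!\left(-\tfrac{1}{2}(q-1)\|f\|_{\cH}^2\right)\bigl|\E[u(t,0)\,Y]\bigr|.
\end{equation*}
The proposition is therefore equivalent to the identity $\E[u(t,0)\,Y] = \sum_{n\geq 0}\theta^{n/2}W_n(t,\cF f)$.

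To establish this identity, I would use the classical chaos expansion of the Wick exponential, $Y = \sum_{n\geq 0} \tfrac{1}{n!}I_n(f^{\otimes n})$ (see Nualart \cite{nualart06}), together with the expansion $u(t,0) = \sum_{n\geq 0}\theta^{n/2}I_n\bigl(\widetilde f_n(\cdot,0;t)\bigr)$ from \eqref{E:Series}. Orthogonality of the Wiener chaoses, $\E[I_n(g)I_m(h)] = \delta_{n,m}\,n!\,\langle g,h\rangle_{\cH^{\otimes n}}$ for symmetric kernels, then yields
\begin{equation*}
	\E[u(t,0)\,Y] = \sum_{n\geq 0}\theta^{n/2}\bigl\langle \widetilde f_n(\cdot,0;t),\,f^{\otimes n}\bigr\rangle_{\cH^{\otimes n}} = \sum_{n\geq 0}\theta^{n/2}\bigl\langle f_n(\cdot,0;t),\,f^{\otimes n}\bigr\rangle_{\cH^{\otimes n}},
\end{equation*}
the last equality holding because $f^{\otimes n}$ is already symmetric. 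The swap of sum and expectation is legitimate once we check absolute convergence, which follows from Cauchy-Schwarz in $\cH^{\otimes n}$ and the $L^2$-convergence of the chaos expansion of $u(t,0)$ guaranteed by Theorem \ref{T:iff}.

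It remains to identify $\langle f_n(\cdot,0;t),f^{\otimes n}\rangle_{\cH^{\otimes n}}$ with $W_n(t,\cF f)$. Writing the inner product on the Fourier side against $\mu^{\otimes n}$, invoking formula \eqref{E:Ffn} at $x=0$ for $\cF f_n$, and using that $\cF G(s,\cdot)(\xi) = \sin(s|\xi|)/|\xi|$ is real so that $\overline{\cF G} = \cF G$, the inner product becomes an integral of $\prod_k \cF G(t_{k+1}-t_k,\cdot)(\xi_1+\cdots+\xi_k)\prod_k\overline{\cF f(\xi_k)}$ over $[0,t]_<^n \times (\bR^d)^n$. A relabeling of the simplex via $s_k = t - t_{n-k+1}$ (or equivalently a permutation of the frequency variables $\xi_k \leftrightarrow \xi_{n-k+1}$, permissible since $\mu$ is symmetric) converts the partial sums $\xi_1+\cdots+\xi_k$ into $\xi_k+\cdots+\xi_n$, matching the defining formula \eqref{E:Wnt} of $W_n(t,\cF f)$. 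The main technical point I anticipate is this Fourier-side bookkeeping; the rest of the argument consists of standard Malliavin calculus manipulations and the trivial observation that finiteness of $\|u(t,0)\|_p$ (provided by Theorem \ref{T:iff}) forces the series on the right-hand side to converge.
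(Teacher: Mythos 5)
Your proposal is correct and follows essentially the same route as the paper: Wick exponential $Z_f=\exp(W(f)-\tfrac12\Norm{f}_\cH^2)$, H\"older's inequality with $\Norm{Z_f}_q=\exp(\tfrac{q-1}{2}\Norm{f}_\cH^2)$, chaos orthogonality to reduce to $\sum_n\theta^{n/2}\langle \widetilde f_n(\cdot,0;t),f^{\otimes n}\rangle_{\cH^{\otimes n}}$, and the Fourier-side change of variables $s_k=t-t_{n+1-k}$, $\xi_k\leftrightarrow\xi_{n+1-k}$ to identify each inner product with $W_n(t,\cF f)$. The only cosmetic difference is that you exploit the symmetry of $f^{\otimes n}$ to pass directly to $f_n$ rather than summing over permutations of the symmetrization as the paper does; both yield the same computation.
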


\begin{proof}
	It suffices to prove the inequality.
	For any distribution $f \in \cH$, let
	\begin{align*}
		Z_f=\exp\left(W(f)-\frac{1}{2}\Norm{f}_{\cH}^2\right)=\sum_{n\geq 0}\frac{1}{n!}I_n(f^{\otimes n}).
	\end{align*}
	Here $f^{\otimes n}$ is the distribution in $\cS'(\bR^{nd})$ whose Fourier transform is given by:
	$\cF f^{\otimes n}(\xi_1,\ldots,\xi_n)=\prod_{k=1}^{n}\cF f(\xi_k)$.
	If $f$ is a function, then $f^{\otimes n}$ is the function $f^{\otimes n}(x_1,\ldots,x_n)=\prod_{k=1}^{n}f(x_k)$.

	Using H\"older's inequality and the fact that $W(f)\sim N(0,\Norm{f}_{\cH}^2)$, we obtain:
	\begin{equation}
		\label{E-uZ-Holder}
		\left|\bE [u(t,0)Z_f]\right| \leq \Norm{u(t,0)}_p \Norm{Z_f}_{q}
		=\Norm{u(t,0)}_p \exp\left\{\frac{1}{2}(q-1)\Norm{f}_{\cH}^2\right\}.
	\end{equation}
	By the orthogonality of the Wiener chaos spaces,
	\begin{equation}
		\label{E-uZ}
		\bE\left[u(t,0)Z_f\right] = \sum_{n\geq0} \theta^{n/2} \frac{1}{n!}\bE[I_n(f_n(\cdot,0;t)) I_n(f^{\otimes n})]
		                          = \sum_{n\geq0} \theta^{n/2}\langle \widetilde{f}_n(\cdot,0;t), f^{\otimes n}\rangle_{\cH^{\otimes n}}.
	\end{equation}
	Using the definition of $\widetilde{f}_n(\cdot,0;t)$, we see that
	\begin{align}
		\notag
		   \langle \widetilde{f}_n(\cdot,0;t), f^{\otimes n}\rangle_{\cH^{\otimes n}}
		=& \int_{(\bR^d)^n} \prod_{k=1}^{n}\cF f(\xi_k) \overline{\cF \widetilde{f}_n(\cdot,x;t)(\xi_1,\ldots,\xi_n)} \mu(d\xi_1) \ldots \mu(d\xi_n) \\ \notag
		=& \frac{1}{n!} \sum_{\sigma\in\Sigma_n} \int_{[0,t]_{<}^n} d{\bf t} \int_{(\bR^d)^n} \mu(d\xi_1) \ldots \mu(d\xi_n) \prod_{k=1}^{n}\cF f(\xi_k) \\ \notag
		 &\times  \prod_{k=1}^{n} \cF G(t_{k+1}-t_k,\cdot)\left(\xi_{\sigma(1)}+\ldots+\xi_{\sigma(k)}\right) \\ \notag
		=& \int_{[0,t]_{<}^n} d{\bf t} \int_{(\bR^d)^n} \mu(d\xi_1) \ldots \mu(d\xi_n) \prod_{k=1}^{n}\cF f(\xi_k) \\ \notag
		 &\times  \prod_{k=1}^{n} \cF G(t_{k+1}-t_k,\cdot)\left(\xi_1+\ldots+\xi_k\right) \\
		=& W_n(t,\cF f),
		\label{product-f}
	\end{align}
	where $t_{n+1}=t$ and the last step is due to the change of variables $s_1=t-t_n,\ldots,s_n=t-t_1$ and $\xi_1'=\xi_n,\ldots,\xi_n'=\xi_1$.
	The conclusion follows from \eqref{E-uZ-Holder}, \eqref{E-uZ} and \eqref{product-f}.
\end{proof}

If Assumption \ref{A:Main} holds,
for any $f \in \cH$ and $t>0$, let $f_t$ be {\it the time-scaled distribution}, which is a distribution in $\cS'(\bR^d)$ whose Fourier transform is
\begin{align}
	\label{E:scale_f}
	\cF f_t(\xi)
	=t^{(2-\alpha)(\beta-1)}\cF f(t^{1-\beta}\xi)
  =t^{\frac{2-\alpha}{3-\alpha}}\cF f\left(t^{1/(\alpha-3)} \xi\right)
	\quad \mbox{for all} \ \xi \in \mathbb{C}^d.
\end{align}
where $\beta$ is given by \eqref{E:beta}.
Note that $f_t\in \cH$.
If $f$ is a function, then $f_t$ is also a function with the scaling property:
\begin{align*}
	f_t(x)=t^{(2-\alpha+d)(\beta-1)}f(t^{\beta-1}x)
	= t^{\frac{2-\alpha+d}{3-\alpha}} f\left(t^{1/(3-\alpha)}x\right),
	\quad \mbox{for all} \ x \in \bR^d.
\end{align*}
If Assumption \ref{A:White} holds, $f_t$ is defined similarly with $\alpha$ replaced by $d$ and $\beta$ given by \eqref{E:beta}.
Regarding $f_t$, we have the following scaling result.

\begin{lemma} \label{scaling-ft}
	Suppose that parts (i) and (ii) of Assumption \ref{A:Main} hold with $\alpha<4$ or Assumption \ref{A:White} holds. Then for any $f \in \cH$ and $\tau>0$,
	\begin{align*}
		\Norm{f_\tau}_{\cH}^2=\tau^{\beta}\|f\|_{\cH}^2 \quad \mbox{and} \quad
		W_n\left(t,\cF f_\tau\right)=W_n\left( t\:\tau^{\beta-1} ,\cF f\right).
	\end{align*}
\end{lemma}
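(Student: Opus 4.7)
Both identities are direct scaling computations, built on three ingredients: the scaling $\mu(cA)=c^{\alpha}\mu(A)$ from \eqref{E:scaleMu} (so that $\mu(d\xi)=c^{\alpha}\mu(d\eta)$ under the substitution $\xi=c\eta$), the identity $\cF G(t,\cdot)(c\xi)=c^{-1}\cF G(ct,\cdot)(\xi)$ from \eqref{E:scaleFG}, and the arithmetic identities $\beta-1=1/(3-\alpha)$ and $(4-\alpha)(\beta-1)=\beta$. The calibration of the exponents in \eqref{E:scale_f} was chosen precisely so that the various powers of $\tau$ balance.

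For the first identity, the plan is to expand
\[
\Norm{f_\tau}_{\cH}^2=\int_{\bR^d}\bigl|\cF f_\tau(\xi)\bigr|^2\mu(d\xi)=\tau^{2(2-\alpha)(\beta-1)}\int_{\bR^d}\bigl|\cF f(\tau^{1-\beta}\xi)\bigr|^2\mu(d\xi),
\]
then substitute $\eta=\tau^{1-\beta}\xi$, which by the scaling of $\mu$ contributes an extra factor $\tau^{\alpha(\beta-1)}$. The total exponent is $[2(2-\alpha)+\alpha](\beta-1)=(4-\alpha)(\beta-1)=\beta$, giving $\Norm{f_\tau}_{\cH}^2=\tau^{\beta}\Norm{f}_{\cH}^2$.

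For the second identity, I would plug the definition of $\cF f_\tau$ into \eqref{E:Wnt} and perform the change of variables $\eta_k=\tau^{1-\beta}\xi_k$ in every spectral integration. Three powers of $\tau$ appear immediately: the product $\prod_{k}\cF f_\tau(\xi_k)$ supplies $\tau^{n(2-\alpha)(\beta-1)}$, the $n$ copies of $\mu$ supply $\tau^{n\alpha(\beta-1)}$, and applying \eqref{E:scaleFG} with $c=\tau^{\beta-1}$ to each factor $\cF G(s_k-s_{k-1},\cdot)\bigl(\tau^{\beta-1}(\eta_k+\cdots+\eta_n)\bigr)$ produces $\tau^{-n(\beta-1)}$ together with the replacement $s_k-s_{k-1}\mapsto\tau^{\beta-1}(s_k-s_{k-1})$ inside the time argument of $\cF G$. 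A final time substitution $u_k=\tau^{\beta-1}s_k$ absorbs this rescaling (so that the new arguments become $u_k-u_{k-1}$), maps the simplex $[0,t]_<^n$ onto $[0,\tau^{\beta-1}t]_<^n$, and contributes a further factor $\tau^{n(1-\beta)}$ from $d\bs$. The four exponents sum to $n(\beta-1)\bigl[(2-\alpha)+\alpha-1-1\bigr]=0$, and what remains is exactly $W_n(\tau^{\beta-1}t,\cF f)$.

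I do not expect a serious obstacle: the argument is essentially bookkeeping once the substitutions are set up correctly. The only points that require a little care are that $f$ may be a genuine distribution (but all operations take place on the Fourier side, where $\cF f\in L^2_{\bC}(\mu)$, so the changes of variable are legitimate) and that the well-definedness of $W_n(\tau^{\beta-1}t,\cF f)$ is already guaranteed by Lemma \ref{L:LaplaceW}. Under Assumption \ref{A:White} the same computation applies verbatim with $\alpha$ replaced by $d$, using $\mu(cA)=c^{d}\mu(A)$; nothing else in the argument needs to change.
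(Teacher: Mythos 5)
Your proof is correct and follows essentially the same route as the paper's: the first identity via the substitution $\xi'=\tau^{1-\beta}\xi$ combined with the scaling of $\mu$ and the identity $(4-\alpha)(\beta-1)=\beta$, and the second via the same spectral change of variables, the scaling \eqref{E:scaleFG} of $\cF G$, and the final time substitution, with the four powers of $\tau$ cancelling exactly as you compute. Nothing is missing.
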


\begin{proof} We treat only the case of Assumption \ref{A:Main}. The white noise case can be proved in a similar way.
	Using the change of variable $\xi'=\tau^{1-\beta}\xi$ and the scaling property of $\mu$, we see that:
	\begin{align*}
		\Norm{f_\tau}_{\cH}^2
		=\tau^{2(2-\alpha)(\beta-1)} \int_{\bR^d}|\cF f(\tau^{1-\beta} \xi)|^2 \mu(d\xi)
		=\tau^{(\beta-1)(4-\alpha)}\Norm{f}_{\cH}^2.
	\end{align*}
	The desired relation follows since $(\beta-1)(4-\alpha)=\beta$.
	The second relation is proved by the change of variable $\xi_k'=\tau^{1-\beta}\xi_k$, applying the scaling properties of
	$\mu$ and $\cF G$ in \eqref{E:scaleMu} and  \eqref{E:scaleFG}, respectively,
	and then anther change of variable $s_k'=\tau^{\beta-1}s_k$ as follows:
	\begin{align*}
		W_n\left(t,\cF f_\tau\right)
		&=\tau^{n(2-\alpha)(\beta-1)}\int_{[0,t]_<^n} d\mathbf{s} \int_{(\bR^d)^n}  \mu(d\xi_1)\ldots \mu(d\xi_n) \\
		&\quad \times \prod_{k=1}^n \cF f\left(\tau^{1-\beta}\xi_k\right) \prod_{k=1}^{n}\cF G(s_k-s_{k-1},\cdot)\left(\xi_k+\cdots+ \xi_n\right)\\
		&=\tau^{2n(\beta-1)} \int_{[0,t]_<^n} d\mathbf{s} \int_{(\bR^d)^n}  \mu(d\xi_1)\ldots \mu(d\xi_n) \\
		&\quad \times  \prod_{k=1}^n\cF f(\xi_k) \prod_{k=1}^{n}\cF G(s_k-s_{k-1},\cdot)\left(\tau^{\beta-1}\left(\xi_k+\cdots + \xi_n\right)\right)\\
		&=\tau^{n(\beta-1)} \int_{[0,t]_<^n}  d\mathbf{s} \int_{(\bR^d)^n} \mu(d\xi_1)\ldots \mu(d\xi_n) \\
		&\quad \times  \prod_{k=1}^n \cF f(\xi_k)
		\prod_{k=1}^{n}\cF G\left(\tau^{\beta-1}(s_k-s_{k-1}),\cdot\right)\left(\xi_k+\cdots+\xi_n\right)\\
		&=W_n\left(t\tau^{\beta-1},\cF f\right).
	\end{align*}
	This proves the lemma.
\end{proof}

\begin{proposition}
	\label{P:LB}
	If Assumption \ref{A:Main} holds with $0<\alpha<d\leq 3$ or Assumption \ref{A:White} holds, then for any $f \in \cH$, $t>0$, and $p,q>1$ with $1/p+1/q=1$, it holds that
	\begin{equation}
		\label{LB-2}
		\Norm{u(t,0)}_p \geq
		\exp\left\{-\frac{1}{2}\: t_p^{\beta}\: \Norm{f}_{\cH}^2 \right\}
		\left|\sum_{n\geq 0} {\theta^{n/2}} W_n\left(t_p^{\beta}, \cF f\right)\right|,
	\end{equation}
	where $t_p$ is defined in \eqref{E:tp}.
\end{proposition}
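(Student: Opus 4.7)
The plan is to substitute a rescaled distribution $f_\tau$ into the inequality of Proposition \ref{LB-1}, and then use the scaling relations of Lemma \ref{scaling-ft} to bring both the exponential factor and the argument of $W_n$ into the form claimed in \eqref{LB-2}. Once $\tau$ is chosen correctly, the identity will follow immediately.

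Applying Proposition \ref{LB-1} with $f$ replaced by $f_\tau\in\cH$ (for a $\tau>0$ to be determined) gives
\begin{align*}
\Norm{u(t,0)}_p
\;\geq\; \exp\!\left\{-\tfrac{1}{2}(q-1)\Norm{f_\tau}_{\cH}^2\right\}
\left|\sum_{n\geq 0}\theta^{n/2}W_n(t,\cF f_\tau)\right|.
\end{align*}
By Lemma \ref{scaling-ft}, $\Norm{f_\tau}_{\cH}^2=\tau^\beta\Norm{f}_{\cH}^2$ and $W_n(t,\cF f_\tau)=W_n(t\,\tau^{\beta-1},\cF f)$, so the right-hand side becomes
\begin{align*}
\exp\!\left\{-\tfrac{1}{2}(q-1)\tau^{\beta}\Norm{f}_{\cH}^2\right\}
\left|\sum_{n\geq 0}\theta^{n/2}W_n(t\,\tau^{\beta-1},\cF f)\right|.
\end{align*}
To match \eqref{LB-2} we need both $(q-1)\tau^\beta = t_p^\beta$ and $t\,\tau^{\beta-1}=t_p^\beta$. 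Choose
$\tau = t_p\,(q-1)^{-1/\beta}$, which immediately gives the first identity. For the second, recall that $q-1=1/(p-1)$, that $\beta=(4-\alpha)/(3-\alpha)$ so $(\beta-1)/\beta=1/(4-\alpha)$, and that $t_p=(p-1)^{1/(4-\alpha)}t$. Then
\begin{align*}
t\,\tau^{\beta-1}
= t\,t_p^{\beta-1}(q-1)^{-(\beta-1)/\beta}
= t\,t_p^{\beta-1}(p-1)^{1/(4-\alpha)}
= t_p^{\beta-1}\cdot t_p = t_p^\beta,
\end{align*}
as needed. Substituting this $\tau$ into the displayed inequality yields \eqref{LB-2}.

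Under Assumption \ref{A:White} the identical calculation works with $\alpha$ replaced by $d$ throughout (and the scaling identities of Lemma \ref{scaling-ft} applied in the white-noise case). The only point requiring care is to ensure $f_\tau$ is a legitimate element of $\cH$, which is exactly the content of the paragraph preceding Lemma \ref{scaling-ft}. There is no real obstacle here beyond the bookkeeping of exponents; the work was done in Proposition \ref{LB-1} and Lemma \ref{scaling-ft}, and this proposition is a clean corollary via the right choice of scale.
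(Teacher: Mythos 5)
Your proof is correct and takes essentially the same route as the paper's: apply Proposition \ref{LB-1} to the rescaled distribution $f_\tau$ and invoke Lemma \ref{scaling-ft}. Your choice $\tau = t_p(q-1)^{-1/\beta}$ simplifies (using $q-1=1/(p-1)$ and $1/\beta=(3-\alpha)/(4-\alpha)$) to the paper's $\tau=(p-1)t$, and your exponent bookkeeping checks out.
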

\begin{proof}
	For any $f \in \cH$, an application of  Proposition \ref{LB-1} to $f_{\tau}$ with $\tau = (p-1)t$,
	followed by Lemma \ref{scaling-ft}, shows that
	\begin{align*}
		\Norm{u(t,0)}_p \geq
		\exp\left\{-\frac{1}{2}\:(q-1) (p-1)^\beta t^{\beta}\: \Norm{f}_{\cH}^2 \right\}
		\left|\sum_{n\geq 0} {\theta^{n/2}} W_n\left(t \left([p-1]t\right)^{\beta-1}, \cF f\right)\right|.
	\end{align*}
	This proves \eqref{LB-2} because $(q-1)(p-1)^{\beta}t^\beta = (p-1)^{\beta-1} t^\beta = (p-1)^{\frac{1}{3-\alpha}}t^\beta=t_p^\beta$.
\end{proof}

Note that $W_n(t,\phi)$ may be negative. Suppose that Assumption \ref{A:Main} holds.
In order to get rid of the absolute value in \eqref{LB-2}, we need to identify $f\in\cH$ for which $W_n(t,\cF f)$ is nonnegative.
For this purpose, we will need to both restrict our dimensions to $d\le 3$ and restrict $\cH$ to the following space
\begin{align}
  \mathcal{H}_+:= \left\{ f\in \mathcal{H}:\: \text{$f$ is
  a nonnegative and nonnegative definite function}\right\}.
\end{align}
With these restrictions, by the Plancherel theorem, we see that
\begin{align}\begin{aligned}
	\label{E:Untf}
	W_n(t,\cF f)
	= & 
\int_{[0,t]_<^n} \int_{\left(\bR^d\right)^n} \prod_{k=1}^{n} (f*\gamma)(x_k)
	    \prod_{k=1}^{n}G\left(s_k-s_{k-1},x_{k}-x_{k-1}\right) d{\bf x}d{\bf s}\\
	=:& U_n(t,f), \qquad \text{for all $f\in\cH_+$ and $d\le 3$,}
\end{aligned}\end{align}
with $s_0=0$ and $x_0=0$.
Since $\prod_{k=1}^n G\left(s_k-s_{k-1},x_k-x_{k-1}\right)$ with $(s_1,\cdots,s_n)$ fixed
is a nonnegative measure with compact support on $\left(\R^{d}\right)^n$, which is true only for $d\le 3$,
and since the other product $(x_1,\cdots,x_n) \mapsto \prod_{k=1}^{n} \left(f*\gamma\right)(x_k)$
is a nonnegative function on $\left(\R^{d}\right)^n$, the $d \mathbf{x}$-integral in \eqref{E:Untf} is
nonnegative, i.e,
\begin{align}
	\label{E:W=U>0}
	0\le W_n(t,\cF f) = U_n(t,f) <\infty,\qquad \text{for all $f\in\cH_+$, $n\ge 1$, $t>0$ and $d\le 3$,}
\end{align}
where the finiteness is a consequence of Proposition \ref{P:LB}.
Relations \eqref{E:Untf} and \eqref{E:W=U>0} continue to hold under Assumption \ref{A:White}, in which case
\begin{align*}
	U_n(t,f):= \int_{[0,t]_{<}^n}\int_{(\bR^d)^n}\prod_{k=1}^{n}f(x_k)
		   \prod_{k=1}^{n}G\left(s_k-s_{k-1},x_k-x_{k-1}\right)d{\bf x}d{\bf s}.
\end{align*}

Now let us define
\begin{align}
	\label{E:Wnt2}
	W_n(t)=\sup_{f\in\cH;\: \Norm{f}_\cH =1} W_n(t,\cF f)\quad\text{and}\quad
	U_n(t)=\sup_{ f \in \cH_+;\: \|f\|_{\cH}=1}U_n(t,f).
\end{align}

Then, when $d\le 3$, we have that
\begin{align*}
	W_n(t) \geq \sup_{f \in \cH_+;\|f\|_{\cH}=1}W_n(t,\cF f)  = U_n(t) \ge 0 .
\end{align*}

\begin{theorem}
	\label{liminf-EW}
	Suppose that Assumption \ref{A:Main} holds with $0<\alpha<d\le 3$. If $\tau$ is an exponential random variable of mean $1$, then
	\begin{align} \label{E:EU}
		\liminf_{n\to \infty}\frac{1}{n} \log \bE \left[U_n(\tau)\right] \geq \log \left(2^{-\frac{\alpha}{4}} \cM^{\frac{4-\alpha}{4}}\right).
	\end{align}
	Relation \eqref{E:EU} also holds under Assumption \ref{A:White} with $\alpha$ and $\cM$ replaced by $d$ and $\cM(\delta_0)$, respectively.
\end{theorem}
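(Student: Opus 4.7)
The plan is to lower bound $\E[U_n(\tau)]$ by a Fourier-domain integral whose exponential growth rate is governed by the top eigenvalue of an appropriate transfer operator, then identify that eigenvalue with $\sqrt{\rho}$ using the variational formula \eqref{E:rho}. Since $U_n(t)\ge U_n(t,f)$ pointwise for any fixed admissible $f\in\cH_+$ with $\|f\|_\cH=1$, and $U_n(t,f)\ge 0$ by \eqref{E:W=U>0},
\[
\E[U_n(\tau)]=\int_0^\infty \mathrm{e}^{-t}U_n(t)\,dt\;\ge\;\sup_{f\in\cH_+,\,\|f\|_\cH=1}\int_0^\infty \mathrm{e}^{-t}U_n(t,f)\,dt.
\]
By the identification $U_n(t,f)=W_n(t,\cF f)$ from \eqref{E:Untf} combined with Lemma~\ref{L:LaplaceW}, the inner integral equals
\[
J_n(\phi):=\int_{(\bR^d)^n}\prod_{k=1}^n\phi(\xi_k)\prod_{k=1}^n\frac{1}{1+|\xi_k+\cdots+\xi_n|^2}\,\mu(d\xi_1)\cdots\mu(d\xi_n),\qquad \phi=\cF f.
\]
Using \eqref{E:rhoM}, $2^{-\alpha/4}\cM^{(4-\alpha)/4}=\sqrt\rho$, so it suffices to exhibit an admissible $\phi$ (i.e., $\phi=\cF f$ for some $f\in\cH_+$ with $\|\phi\|_{L^2(\mu)}=1$) satisfying $\liminf_n n^{-1}\log J_n(\phi)\ge \tfrac12\log\rho$.

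I would construct such a $\phi$ from a near-optimizer of the variational formula for $\rho$. Fix $\varepsilon>0$, and by \eqref{E:rho} with the substitution $g(\eta)=h(\eta)/\sqrt{1+|\eta|^2}$, pick a smooth, real, even $g\ge 0$ with $\int(1+|\eta|^2)g(\eta)^2\,d\eta=1$ and $\|g*g\|_{L^2(\mu)}^2\ge\rho-\varepsilon$; density of smooth nonnegative functions makes this possible. Set $\phi_*:=g*g$ and $f_*:=\cF^{-1}\phi_*=(2\pi)^{-d}(\cF g)^2$. Since $g$ is real and even, $\cF g$ is real, so $f_*\ge 0$; and $\cF f_*=\phi_*\ge 0$, so $f_*$ is positive definite; moreover $f_*$ is continuous (as $g\in L^1$) and can be made strictly positive by choosing $g$ as a positive combination of Gaussians (or by a small Gaussian perturbation). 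Hence $f_*\in\cH_+$; normalize $\phi:=\phi_*/\|\phi_*\|_{L^2(\mu)}$ so that $\|\phi\|_{L^2(\mu)}=1$.

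The main step, and the principal obstacle, is to prove $\liminf_n J_n(\phi)^{1/n}\ge \|\phi_*\|_{L^2(\mu)}\ge\sqrt{\rho-\varepsilon}$. After the change of variables $\eta_k=\xi_k+\cdots+\xi_n$ (so $\eta_{n+1}=0$) and writing $F:=\phi\varphi$ with $\varphi=d\mu/dx$,
\[
J_n(\phi)=\int_{\bR^{dn}}\prod_{k=1}^n\frac{F(\eta_k-\eta_{k+1})}{1+|\eta_k|^2}\,d\eta_1\cdots d\eta_n,
\]
which is the $n$-fold iterate of the transfer operator $\mathcal{T}u(\eta)=(1+|\eta|^2)^{-1}(F*u)(\eta)$ applied to a specific boundary datum. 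Conjugating by $(1+|\cdot|^2)^{-1/2}$ yields the symmetric operator $\widetilde{\mathcal{T}}$ on $L^2(\bR^d)$ with kernel $\widetilde K(\eta,\eta')=F(\eta-\eta')\big[(1+|\eta|^2)(1+|\eta'|^2)\big]^{-1/2}$, which is Hilbert--Schmidt for $d\le 3$ since $(1+|\cdot|^2)^{-1}\in L^2(\bR^d)$ in those dimensions. A direct Rayleigh-quotient computation with trial function $u=(1+|\cdot|^2)^{1/2}g$ gives
\[
\langle u,\widetilde{\mathcal{T}}u\rangle_{L^2(\bR^d)}=\int F(\xi)(g*g)(\xi)\,d\xi=\int\phi(g*g)\,d\mu=\|\phi_*\|_{L^2(\mu)},
\]
so $\lambda_{\max}(\widetilde{\mathcal{T}})\ge\|\phi_*\|_{L^2(\mu)}$. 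Since $\widetilde K>0$, Perron--Frobenius produces a simple top eigenvalue with strictly positive eigenfunction, and a standard spectral expansion then gives $J_n(\phi)^{1/n}\to\lambda_{\max}(\widetilde{\mathcal{T}})$; sending $\varepsilon\downarrow 0$ completes the proof. The white-noise case (Assumption~\ref{A:White}) is handled identically after replacing $\alpha$ and $\cM$ by $d$ and $\cM(\delta_0)$. The delicate point is this final spectral-theoretic convergence: verifying that the boundary data produced by the change of variables have nonvanishing overlap with the Perron eigenfunction so that $J_n(\phi)^{1/n}$ converges to $\lambda_{\max}(\widetilde{\mathcal{T}})$ rather than to a strictly smaller value.
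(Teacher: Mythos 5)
Your overall skeleton tracks the paper's proof closely: the paper also bounds $\E[U_n(\tau)]$ below by the Laplace-transformed Fourier integral $J_n(\cF f)$ for $f\in\cH_+$ via Lemma \ref{L:LaplaceW}, shows that the exponential rate of $J_n(\phi)$ is $\log\rho(\phi)$ with $\rho(\phi)$ as in \eqref{E:rhophi}, and then establishes $\sup_{f\in\cH_+,\,\|f\|_\cH=1}\rho(\cF f)=\rho^{1/2}$ by essentially the same Fourier computation you perform with the substitution $g=h/\sqrt{1+|\cdot|^2}$ (your construction of a near-optimal $f_*$ and your Rayleigh-quotient identity reprove the ``$\geq$'' half of \eqref{E:supHplus}, which is all the lower bound needs). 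The genuine divergence is at the crucial step $\liminf_n n^{-1}\log J_n(\phi)\ge\log\rho(\phi)$: the paper outsources this to the localization argument of Bass, Chen and Rosen \cite{BCR09} (relation \eqref{BCR}), whereas you attempt a self-contained transfer-operator/Perron--Frobenius argument, and that argument has a real gap in two places.

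First, the Hilbert--Schmidt claim is false on part of the admissible range. One computes $\|\widetilde K\|_{HS}^2=\int_{\bR^d}F(\xi)^2\,h(\xi)\,d\xi$ with $h(\xi)=\int_{\bR^d}(1+|\xi+\eta|^2)^{-1}(1+|\eta|^2)^{-1}\,d\eta$, which for $d\le 3$ is bounded above (Cauchy--Schwarz) and bounded below by a positive constant on $\{|\xi|<1\}$. But $F=\phi\varphi$ where $\varphi$ is homogeneous of degree $-(d-\alpha)$ by \eqref{E:scaleMu} and $\phi(0)=(g*g)(0)>0$, so $\int_{|\xi|<1}F(\xi)^2\,d\xi=\infty$ whenever $2(d-\alpha)\ge d$, i.e.\ $\alpha\le d/2$ (e.g.\ $d=3$, $\alpha=1$). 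In that regime $\widetilde{\mathcal T}$ is not Hilbert--Schmidt and not obviously compact, the top of its spectrum need not be an eigenvalue, and both the Perron--Frobenius step and the spectral expansion lose their footing. Second, even where compactness holds, the two boundary data produced by your change of variables are $(1+|\cdot|^2)^{-1/2}$ (from integrating $\eta_1$ over all of $\bR^d$) and $F(\cdot)(1+|\cdot|^2)^{-1/2}$ (from the pinned endpoint $\eta_{n+1}=0$); the first is not in $L^2(\bR^d)$ for $d\ge 2$, so the representation of $J_n(\phi)$ as an $L^2$ pairing $\langle w_1,\widetilde{\mathcal T}^{\,n-1}w_2\rangle$ does not make sense as written and the ``standard spectral expansion'' cannot be invoked. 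For a lower bound one may truncate the boundary data using positivity of the kernel, but one must then still show that the truncated quantity grows like $\lambda_{\max}^{\,n}$, which is precisely the nontrivial content of Section 3 of \cite{BCR09}; you flag this as ``the delicate point'' but do not close it. The cleanest repair is to replace the spectral argument by the citation the paper uses, i.e.\ \eqref{BCR}, after which the rest of your argument correctly delivers the constant $2^{-\alpha/4}\cM^{(4-\alpha)/4}=\rho^{1/2}$ via \eqref{E:rhoM}.
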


\begin{proof}
	We treat only the case when Assumption \ref{A:Main} holds. The white noise case can be proved in a similar way.
	The proof follows using arguments similar to those in Section 3 of Bass et al \cite{BCR09}.
	Notice that
	\begin{align}
		\label{E_:EU}
		\bE \left[U_n(\tau)\right]
		=\int_0^{\infty}\e^{-t}U_n(t)dt
		\geq \sup_{f\in\cH_+;\: \Norm{f}_\cH=1} \int_0^{\infty}\e^{-t}U_n(t,f)dt.
	\end{align}
	Fix an arbitrary $f\in\cH_+$ with $\Norm{f}_\cH=1$ and set $\phi=\cF f$.
	Since $f$ is nonnegative and nonnegative definite, we see that $\phi$ is also nonnegative and nonnegative definite (hence symmetric).
	Moreover, $\Norm{\phi}_{L^2(\mu)} = \Norm{f}_{\cH}=1$.
	By \eqref{E:W=U>0}, we can replace the $U_n(t,f)$ in \eqref{E_:EU} by $W_n(t,\phi)$.
	Then we apply Lemma \ref{L:LaplaceW} to the $dt$ integral to see that
	\begin{align}
		\label{E_:lbdEU}
		\bE  \left[U_n(\tau)\right]\geq \int_{(\bR^d)^n} \prod_{k=1}^{n}\phi(\xi_k) \prod_{k=1}^{n}\frac{1}{1+|\xi_k+\ldots+\xi_n|^2}
		\mu(d\xi_1)\ldots \mu(d\xi_n).
	\end{align}

Following the same arguments as those in Bass, Chen and Rosen \cite{BCR09}, one can prove that (see Appendix C for more details):
	\begin{equation}
		\label{BCR}
		\liminf_{n \to \infty} \frac{1}{n} \log \int_{(\bR^d)^n} \prod_{k=1}^{n}\phi(\xi_k) \prod_{k=1}^{n}\frac{1}{1+|\xi_k+\ldots+\xi_n|^2} \mu(d\xi_1)\ldots \mu(d\xi_n) \geq \log \rho(\phi),
	\end{equation}
where
	\begin{align} \label{E:rhophi}
		\rho(\phi)=\sup_{\Norm{h}_{L^2(\R^d)}=1} \int_{\bR^d} \phi(\xi) \left[\int_{\bR^d} \frac{h(\xi+\eta)h(\eta)}{\sqrt{1+|\xi+\eta|^2}\sqrt{1+|\eta|^2}} d\eta\right] \mu(d\xi).
	\end{align}

  Since both $\phi$ and $\mu$ are nonnegative, the supremum in \eqref{E:rhophi}
  is obtained at some nonnegative $h$. Hence, in the following, we may assume that $h$ is a
nonnegative function. Hence, we see that any $f\in \cH_+$ with $\Norm{f}_{\cH}=1$,
	\begin{align*}
		\liminf_{n \to \infty}\frac{1}{n} \log \E\left[U_n(\tau)\right] \geq \log \rho(\cF f).
	\end{align*}
	We claim that
	\begin{align} \label{E:supHplus}
		\sup_{f\in \mathcal{H}_+;\: \Norm{f}_\mathcal{H}=1} \rho\left(\mathcal{F} f\right) =\sup_{f\in \mathcal{H};\: \Norm{f}_\mathcal{H}=1} \rho\left(\mathcal{F} f\right) = \rho^{1/2},
	\end{align}
	where $\rho$ is defined in \eqref{E:rho}.
	Then an application of \eqref{E:rhoM} proves \eqref{E:EU}.
	\bigskip

	It remains to prove \eqref{E:supHplus}.
	Notice that for any $h\in L^2(\bR^d)$, $\frac{h(\xi)}{\sqrt{1+|\xi|^2}}$ is also in $L^2\left(\bR^d\right)$. Hence the following
	function is well defined:
	\begin{align*}
		g\left(x\right) := g_{h}(x)=\left(2\pi\right)^{d/2} \mathcal{F}^{-1} \left(\frac{h(\cdot)}{\sqrt{1+|\cdot|^2}}\right) \left(x\right).
	\end{align*}
	A first observation is that $g$ is not only in $L^2(\R^d)$ but also in $W^{1,2}(\R^d)\subseteq L^2(\R^d)$ with
	$\Norm{g}_{W^{1,2}(\R^d)}=1$. Indeed,
	\begin{align*}
    \Norm{g}_{W^{1,2}(\R^d)}^2
		= \frac{1}{(2\pi)^{d}} \int_{\R^d} \left(1+|\xi|^2\right) \left|\mathcal{F}g(\xi)\right|^2 d \xi
		= \int_{\R^d} |h\left(\xi\right)|^2 d \xi =1.
	\end{align*}

Note that $\cF (|g|^2)=H * \widetilde{H}$ where $H(\xi)=h(\xi)/\sqrt{1+|\xi|^2}$, since
\begin{align*}
(H * \widetilde{H})(\xi)&=\int_{\bR^d} \frac{h(\xi+\eta)}{\sqrt{1+|\xi+\eta|}}\frac{h(\eta)}{\sqrt{1+|\eta|}}d\eta=\frac{1}{(2\pi)^d}
\int_{\bR^d} \cF g(\xi+\eta) \cF g(\eta)d\eta\\
&=\frac{1}{(2\pi)^d} (\cF g * \widetilde{\cF g})(\xi)=\cF (|g|^2) (\xi),
\end{align*}
using the fact that $\widetilde{\cF g}=\cF \overline{g}$ (because $h$ is real) and the notation $\widetilde{f}(x)=f(-x)$.
If $h\geq 0$, then $\nu(d\xi)=(H * \widetilde{H})(\xi)d\xi$ is a non-negative tempered measure (since $H*\widetilde{H}$ is bounded by $\|h\|_2^2$), and hence, by the Bochner-Schwartz theorem, $|g|^2$ is non-negative-definite. Hence, if $h\geq 0$, then $|g|^2 \in \cH_{+}$.\\
The supremum $\rho\left(\mathcal{F} f\right)$ defined in \eqref{E:rhophi} can be written as:
	\begin{align*}
		\rho\left(\mathcal{F} f\right)
		&= \sup_{\Norm{h}_{L^2(\R^d)}=1,h\geq 0} \int_{\R^d} \cF f (\xi)\cF (|g_h|^2)(\xi) \mu(d\xi) \\
		&= \sup_{\Norm{h}_{L^2(\R^d)}=1,h\geq 0} \int_{\R^d} \left(f*\gamma\right)(x)|g_h|^2(x) d x \\
    &= \sup_{\Norm{h}_{L^2(\R^d)}=1,h\geq 0} \langle f,|g_h|^2 \rangle_{\mathcal{H}}.
	\end{align*}
   Then
  we see that
	\begin{align*}
		\sup_{f\in \mathcal{H};\: \Norm{f}_\mathcal{H}=1} \rho\left(\mathcal{F} f\right)
    & =  \sup_{\Norm{h}_{L^2(\R^d)}=1,h\geq 0} \, \, \sup_{f\in \mathcal{H};\: \Norm{f}_\mathcal{H}=1} \langle f,|g_h|^2 \rangle_{\mathcal{H}}
		 = \sup_{\Norm{h}_{L^2(\R^d)}=1,h\geq 0} \langle f_h^*,|g_h|^2 \rangle_{\mathcal{H}}
	\end{align*}
  where the optimal function is $f_h^*=C|g_h|^2$ with $C=\Norm{|g_h|^2}_\mathcal{H}^{-1}$. Since $f_{h}^* \in \cH_{+}$, we have:
  $$\sup_{f\in \mathcal{H};\: \Norm{f}_\mathcal{H}=1} \rho\left(\mathcal{F} f\right)=
  \sup_{f\in \mathcal{H}_{+};\: \Norm{f}_\mathcal{H}=1} \rho\left(\mathcal{F} f\right)=\sup_{\Norm{h}_{L^2(\R^d)}=1,h\geq 0} \langle f_h^*,|g_h|^2 \rangle_{\mathcal{H}}.$$
  Relation \eqref{E:supHplus} follows noting that
  \begin{align*}
  \langle f_h^*,|g_h|^2 \rangle_{\mathcal{H}}&=\||g_h|^2\|_{\cH}=\left\{\int_{\bR^d} \big|\cF (|g_h|^2 )(\xi)\big|^2 \mu(d\xi)\right\}^{1/2}\\
  &
  =\left\{\int_{\bR^d} \left[ \int_{\bR^d}\frac{h(\xi+\eta)}{\sqrt{1+|\xi+\eta|^2}}\frac{h(\eta)}{\sqrt{1+|\eta|^2}}d\eta
  \right]^2 \mu(d\xi)\right\}^{1/2}.
  \end{align*}

\end{proof}


The next result gives the scaling property of $W_n$ and $U_n$.

\begin{lemma}
	\label{scaling-W}
	Under Assumption \ref{A:Main} with $\alpha<4$, we have that
	$W_n(t)=t^{\frac{4-\alpha}{2}n}W_n(1)$.
	Moreover, when $d\le 3$, $U_n(t)=t^{\frac{4-\alpha}{2}n}U_n(1)$.
	This property also holds under Assumption \ref{A:White} with $\alpha$ replaced by $d$.
\end{lemma}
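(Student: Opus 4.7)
My plan is to establish the scaling for $W_n(t,\phi)$ at the level of the integrand by a two-step change of variables, and then pass to the suprema. Since under Assumption \ref{A:Main} the Fourier kernel $\cF G$ satisfies $\cF G(ct,\cdot)(\xi)=c\,\cF G(t,\cdot)(c\xi)$ (see \eqref{E:scaleFG}) and the spectral measure transforms as $\int h(t\xi)\mu(d\xi)=t^{-\alpha}\int h(\xi)\mu(d\xi)$ (by \eqref{E:scaleMu}), these are the only two ingredients I expect to need.

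Starting from the definition \eqref{E:Wnt} of $W_n(t,\phi)$, I would first substitute $s_k = t\,s_k'$ to rescale the simplex $[0,t]_<^n$ to $[0,1]_<^n$, picking up a factor $t^n$ from $d\bs$. The arguments of the wave kernel then become $\cF G(t(s_k'-s_{k-1}'),\cdot)(\xi_k+\cdots+\xi_n) = t\,\cF G(s_k'-s_{k-1}',\cdot)(t(\xi_k+\cdots+\xi_n))$, yielding another factor $t^n$. Next, I would perform the spectral substitution $\eta_k = t\xi_k$: using the scaling of $\mu$ iteratively on each of the $n$ variables produces the factor $t^{-n\alpha}$, and transforms the inner argument to $\eta_k+\cdots+\eta_n$. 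The resulting integrand involves $\prod_k \phi(\eta_k/t)$.

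To close the loop, I would introduce the rescaled symbol $\psi(\eta) := t^{-\alpha/2}\phi(\eta/t)$. A direct computation with the scaling of $\mu$ shows $\|\psi\|_{L^2(\mu)} = \|\phi\|_{L^2(\mu)}$, and the substitution $\phi(\eta_k/t) = t^{\alpha/2}\psi(\eta_k)$ contributes $t^{n\alpha/2}$. Collecting all factors gives
\begin{equation*}
W_n(t,\phi) = t^{n+n-n\alpha+n\alpha/2}\,W_n(1,\psi) = t^{(4-\alpha)n/2}\,W_n(1,\psi).
\end{equation*}
Because $\phi\mapsto\psi$ is a bijection of the unit sphere of $L^2(\mu)$ onto itself, and because $\cF:\cH\to L^2(\mu)$ is an isometry, taking the supremum in \eqref{E:Wnt2} yields $W_n(t)=t^{(4-\alpha)n/2}W_n(1)$.

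For $U_n$, I would exploit the identity $U_n(t,f)=W_n(t,\cF f)$ for $f\in\cH_+$ established in \eqref{E:Untf}–\eqref{E:W=U>0}, which is valid for $d\le 3$. The only extra point to verify is that the symbol-level rescaling $\phi\mapsto\psi$ corresponds, in physical space, to a bijection of $\cH_+$ onto itself: writing $g$ for the function with $\cF g = \psi$, a formal inverse Fourier transform gives $g(x) = t^{d-\alpha/2}f(tx)$, which inherits continuity, strict positivity, and nonnegative-definiteness from $f$. Hence the supremum over $\cH_+$ rescales in the same way, giving $U_n(t)=t^{(4-\alpha)n/2}U_n(1)$. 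The white-noise case (Assumption \ref{A:White}) is identical with the scaling exponent of $\mu$ replaced by $d$, since $\mu(cA)=c^d\mu(A)$; I expect no technical obstacle beyond keeping the bookkeeping of factors straight, which is the only step where errors are likely.
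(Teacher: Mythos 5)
Your proposal is correct and follows essentially the same route as the paper: both proofs combine the scaling of $\mu$ from \eqref{E:scaleMu} with that of $\cF G$ from \eqref{E:scaleFG} to show $W_n(t,\phi)=t^{(4-\alpha)n/2}W_n(1,\psi)$ for a norm-preserving rescaling $\psi$ of the symbol $\phi$, and then pass to the supremum (the paper writes the rescaling as $\phi\mapsto t^{\alpha/2}\phi(t\,\cdot)$, the inverse of yours, which is immaterial). Your explicit verification that the corresponding physical-space map $f\mapsto t^{d-\alpha/2}f(t\,\cdot)$ preserves $\cH_+$ is a point the paper leaves implicit when it says the $W_n$ case suffices, so that addition is welcome rather than a deviation.
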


\begin{proof}
	We consider only the case of Assumption \ref{A:Main}.
	It suffices to prove the case of $W_n(t)$. For any $f\in\cH$, denote $\phi=\cF f\in L^2(\mu)$.
	Using the scaling properties of $\mu$ and $\cF G$ in \eqref{E:scaleMu} and \eqref{E:scaleFG}, respectively,
	we see that for any $c>0$,
	\begin{align*}
		\|\phi(c^{-1}\cdot\,)\|_{L^2(\mu)}^2=c^{\alpha}\|\phi\|_{L^2(\mu)}^2 \quad \mbox{and} \quad
		W_n(t,\phi(c^{-1}\cdot\,))=c^{-(2-\alpha)n}W_n(ct,\phi).
	\end{align*}
	We use these properties for $c=t^{-1}$. Then $\|t^{\alpha/2} \phi(t\,\cdot\,)\|_{L^2(\mu)}^2=\|\phi\|_{L^2(\mu)}^2$ and
	$W_n(t,\phi(t \cdot \,))=t^{n(2-\alpha)}W_n(1,\phi)$.
	We multiply the previous relation by $t^{n\alpha/2}$. Using the fact that $W_n(t,c\phi)=c^n W_n(t,\phi)$ for any $c>0$ and $\phi \in L^2(\mu)$, we obtain:
	$W_n(t,t^{\alpha/2}\phi(t\cdot \,))=t^{\frac{4-\alpha}{2}n}W_n(t,\phi)$.
	Finally, the lemma is proved by taking the supremum over all $f\in \cH$ with $\|f\|_{\cH}=1$.
\end{proof}

\begin{lemma}
	\label{L:asyW1}
	Under Assumption \ref{A:Main} with $0<\alpha<d\leq 3$, it holds that
	\begin{align}
		\label{E:b}
		\liminf_{n \to \infty}\frac{1}{n} \log \left((n!)^{\frac{4-\alpha}{2}} U_n(1) \right)
		\geq \log\left[\left( \frac{2}{4-\alpha}\right)^{\frac{4-\alpha}{2}}2^{-\frac{\alpha}{4}}\cM^{\frac{4-\alpha}{4}} \right].
	\end{align}
	Relation \eqref{E:b} holds also under Assumption \ref{A:White} with $\alpha$ and $\cM$ replaced by $d$ and $\cM(\delta_0)$, respectively.
\end{lemma}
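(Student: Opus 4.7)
The plan is to reduce the claim to Theorem \ref{liminf-EW} by integrating out the exponential random variable $\tau$ explicitly. Let $\tau$ be an exponential random variable of mean $1$. By the scaling property established in Lemma \ref{scaling-W}, namely $U_n(t)=t^{(4-\alpha)n/2}U_n(1)$, we obtain the identity
\begin{equation*}
\bE\bigl[U_n(\tau)\bigr]=\int_0^{\infty}e^{-t}U_n(t)\,dt=U_n(1)\int_0^{\infty}e^{-t}t^{(4-\alpha)n/2}\,dt=U_n(1)\,\Gamma\!\left(\tfrac{4-\alpha}{2}n+1\right).
\end{equation*}
This is the key elementary step: because $U_n$ factors as a power of $t$ times $U_n(1)$, the Laplace transform collapses to a Gamma factor.

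Solving for $U_n(1)$ and multiplying by $(n!)^{(4-\alpha)/2}$ gives
\begin{equation*}
(n!)^{(4-\alpha)/2}\,U_n(1)\;=\;\frac{(n!)^{(4-\alpha)/2}}{\Gamma\!\left(\tfrac{4-\alpha}{2}n+1\right)}\,\bE\bigl[U_n(\tau)\bigr].
\end{equation*}
Taking $\frac{1}{n}\log$ and passing to $\liminf$, I would use Stirling's formula in the form \eqref{E:StirlingRate} with $a=(4-\alpha)/2$ to handle the deterministic ratio:
\begin{equation*}
\lim_{n\to\infty}\frac{1}{n}\log\frac{(n!)^{(4-\alpha)/2}}{\Gamma\!\left(\tfrac{4-\alpha}{2}n+1\right)}\;=\;-\frac{4-\alpha}{2}\log\frac{4-\alpha}{2}\;=\;\log\!\left(\frac{2}{4-\alpha}\right)^{(4-\alpha)/2}.
\end{equation*}
Combining this with the lower bound for $\frac{1}{n}\log\bE[U_n(\tau)]$ from Theorem \ref{liminf-EW} yields the claimed inequality
\begin{equation*}
\liminf_{n\to\infty}\frac{1}{n}\log\bigl((n!)^{(4-\alpha)/2}U_n(1)\bigr)\;\geq\;\log\!\left(\frac{2}{4-\alpha}\right)^{(4-\alpha)/2}+\log\!\left(2^{-\alpha/4}\cM^{(4-\alpha)/4}\right),
\end{equation*}
which is exactly \eqref{E:b}.

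The white-noise case under Assumption \ref{A:White} is identical: Lemma \ref{scaling-W} already states the scaling $U_n(t)=t^{(4-d)n/2}U_n(1)$ with $\alpha$ replaced by $d$, and Theorem \ref{liminf-EW} provides the corresponding lower bound with $\cM$ replaced by $\cM(\delta_0)$, so the same calculation goes through verbatim. There is no serious obstacle here; the only step requiring any care is justifying the Fubini interchange $\bE[U_n(\tau)]=\int_0^\infty e^{-t}U_n(t)\,dt$, but this is immediate because $U_n(t)\geq 0$ (see \eqref{E:W=U>0}) and $U_n$ is the supremum of a nonnegative quantity, so Tonelli applies without further comment.
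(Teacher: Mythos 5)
Your proof is correct and follows essentially the same route as the paper: both use the scaling identity $U_n(t)=t^{(4-\alpha)n/2}U_n(1)$ from Lemma \ref{scaling-W} to write $\bE[U_n(\tau)]=\Gamma\bigl(\tfrac{4-\alpha}{2}n+1\bigr)U_n(1)$, invoke Theorem \ref{liminf-EW} for the lower bound on $\tfrac{1}{n}\log\bE[U_n(\tau)]$, and convert the Gamma factor via \eqref{E:StirlingRate}. Your extra remark about Tonelli is harmless but unnecessary, since $\bE[U_n(\tau)]=\int_0^\infty e^{-t}U_n(t)\,dt$ is just the expectation of a nonnegative function of a single random variable.
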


\begin{proof}
  Let $\tau$ denote an exponential random variable of mean one.
	By the scaling property of $U_n$ given by Lemma \ref{scaling-W}, we have:
	\begin{align*}
		\E\left[U_n(\tau)\right]=\bE (\tau^{\frac{4-\alpha}{2}n}) U_n(1)=\Gamma\left(\frac{4-\alpha}{2}n+1\right)U_n(1).
	\end{align*}
	Using Theorem \ref{liminf-EW}, we obtain:
	\begin{align*}
		\liminf_{n \to \infty}\frac{1}{n} \log \left(\Gamma\left(\frac{4-\alpha}{2}n+1\right)U_n(1)\right) \geq \log \left( 2^{-\frac{\alpha}{4}}\cM^{\frac{4-\alpha}{4}}\right).
	\end{align*}
	An application of \eqref{E:StirlingRate} proves the lemma.
\end{proof}

\begin{lemma}
	\label{L:AsyU}
	Under Assumption \ref{A:Main} with $0<\alpha<d\le 3$, for all $a,\theta>0$,
	there exists a constant $c_1=c_1\left(\alpha, \cM, a, \theta\right)>0$
	such that, by setting $n_t=[c_1\: t]$, it holds that
	\begin{align}
		\label{E:tAsyU}
		&\liminf_{t \to \infty} \frac{1}{t} \log \big(a^{n_t}\theta^{n_t/2} U_{n_t}(t)\big)
		\ge \left(a\sqrt{\theta}\right)^{\frac{2}{4-\alpha}} 2^{-\frac{\alpha}{2(4-\alpha)}}\cM^{1/2}.
	\end{align}
	Relation \eqref{E:tAsyU} holds also under Assumption \ref{A:White} with $\alpha$ and $\cM$ replaced by $d$ and $\cM(\delta_0)$, respectively.
\end{lemma}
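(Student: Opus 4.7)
The plan is to combine the scaling property of $U_n$ from Lemma \ref{scaling-W} with the factorial-type lower bound of Lemma \ref{L:asyW1}, and then optimize over the growth rate $c_1$ of $n_t$ relative to $t$. First, by Lemma \ref{scaling-W}, $U_n(t) = t^{(4-\alpha)n/2} U_n(1)$, so that
\begin{align*}
a^n \theta^{n/2} U_n(t)
= \bigl(a\sqrt{\theta}\bigr)^n \, t^{(4-\alpha)n/2}\, U_n(1).
\end{align*}
Setting $R := \left(\frac{2}{4-\alpha}\right)^{(4-\alpha)/2} 2^{-\alpha/4} \cM^{(4-\alpha)/4}$, Lemma \ref{L:asyW1} together with the definition of $\liminf$ yields, for any $\delta\in(0,1)$, an index $N_\delta$ such that $(n!)^{(4-\alpha)/2} U_n(1) \ge ((1-\delta) R)^n$ for all $n \ge N_\delta$. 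Hence for $n \ge N_\delta$,
\begin{align*}
a^n \theta^{n/2} U_n(t) \;\ge\; \left(a\sqrt{\theta}\,(1-\delta) R\right)^n \, \frac{t^{(4-\alpha)n/2}}{(n!)^{(4-\alpha)/2}}.
\end{align*}

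The next step is to insert $n = n_t = \lfloor c_1 t\rfloor$ with $c_1>0$ to be chosen, take logarithms, divide by $t$, and apply Stirling's formula $\log n! = n\log n - n + O(\log n)$. This gives
\begin{align*}
\frac{1}{t}\log\bigl(a^{n_t}\theta^{n_t/2} U_{n_t}(t)\bigr)
\;\ge\; c_1 \log\!\bigl((1-\delta) a\sqrt{\theta}\, R\bigr)
   + c_1 \cdot \frac{4-\alpha}{2}\,\log\!\frac{e}{c_1} + o(1)
\end{align*}
as $t\to\infty$. I then optimize the right-hand side over $c_1>0$. Differentiating in $c_1$ and setting the derivative to zero gives the critical point
\begin{align*}
c_1^\star = \bigl((1-\delta) a\sqrt{\theta}\, R\bigr)^{2/(4-\alpha)},
\end{align*}
at which the expression equals $\frac{4-\alpha}{2}\, c_1^\star = \frac{4-\alpha}{2}\bigl((1-\delta) a\sqrt{\theta}\, R\bigr)^{2/(4-\alpha)}$.

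Finally, substituting the definition of $R$ gives
\begin{align*}
R^{2/(4-\alpha)} = \frac{2}{4-\alpha}\, 2^{-\alpha/(2(4-\alpha))}\, \cM^{1/2},
\end{align*}
so the optimized value becomes $(1-\delta)^{2/(4-\alpha)}\bigl(a\sqrt{\theta}\bigr)^{2/(4-\alpha)} 2^{-\alpha/(2(4-\alpha))} \cM^{1/2}$. Letting $\delta \downarrow 0$ yields the claim with $c_1 = c_1^\star\vert_{\delta=0}$. The white-noise case is handled identically with $\alpha$ replaced by $d$ and $\cM$ replaced by $\cM(\delta_0)$. The main (mild) obstacle is just the bookkeeping in going from a $\liminf$ in $n$ to a uniform lower bound that survives being evaluated at $n = n_t$; this is where the $(1-\delta)$ factor enters, and it is essential to let $\delta \downarrow 0$ only after the scaling calculation so that the chosen $c_1$ depends continuously on $\delta$ and converges to the desired critical constant.
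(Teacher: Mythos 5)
Your proof is correct and follows essentially the same route as the paper's: the scaling identity $U_n(t)=t^{(4-\alpha)n/2}U_n(1)$ from Lemma \ref{scaling-W}, the lower bound of Lemma \ref{L:asyW1} (your factor $(1-\delta)$ playing the role of the paper's $\e^{-\varepsilon}$), Stirling's formula, and optimization over the growth rate, arriving at the same optimal $c_1=(a\sqrt{\theta}R)^{2/(4-\alpha)}$. The only cosmetic difference is the order of limits: it is cleanest to fix $c_1$ at its $\delta=0$ value from the outset so that the sequence $n_t$ does not change with $\delta$, and only then let $\delta\downarrow 0$ in the resulting lower bound --- which is exactly what the paper does, and which your closing remark already essentially acknowledges.
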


\begin{proof}
	Let $\varepsilon>0$ be arbitrary. By Lemma \ref{L:asyW1}, there exists $N_{\varepsilon} \in \bN$ such that for all $n\geq N_{\varepsilon}$,
	$(n!)^{\frac{4-\alpha}{2}} U_n(1)\geq \e^{n(\log R'-\varepsilon)}=(R')^n \e^{-n \varepsilon}$,
	where
	\begin{align*}
		R'=\left(\frac{2}{4-\alpha} \right)^{\frac{4-\alpha}{2}} 2^{-\frac{\alpha}{4}} \cM^{\frac{4-\alpha}{4}}.
	\end{align*}
	Fix some $c>0$ and set $n_t:=[ct]$.
	Note that $n_t \geq N_{\varepsilon}$ for any $t \geq t_{\varepsilon}:=(N_{\varepsilon}+1)/c$. For any $t \geq t_{\varepsilon}$,
	\begin{align}
		\label{E_:Asy1}
		a^{n_t}\theta^{n_t/2}U_{n_t}(t)
		=a^{n_t}\theta^{n_t/2}U_{n_t}(1) t^{\frac{4-\alpha}{2}{n_t}}
		\geq (a \sqrt{\theta} R')^{n_t} \frac{1}{(n_t!)^{\frac{4-\alpha}{2}}} t^{\frac{4-\alpha}{2}n_t} \e^{-n_t \varepsilon}.
	\end{align}
	Since $\lim_{t \to \infty}t^{-1}n_t=c$, we deduce that
	\begin{align}
		\label{E_:Asy2}
		\liminf_{t \to \infty}\frac{1}{t}\log \left(a^{n_t}\theta^{n_t/2}U_{n_t}(t)\right)
		= c \liminf_{t \to \infty}\frac{1}{n_t}\log \left(a^{n_t}\theta^{n_t/2}U_{n_t}(t)\right) =:I(n_t).
	\end{align}
	Now by \eqref{E_:Asy1}, we see that
	\begin{align*}
		I\left(n_t\right)
		&\geq c\liminf_{t \to \infty} \frac{1}{n_t} \log \left((a \sqrt{\theta} R')^{n_t} \frac{1}{(n_t!)^{\frac{4-\alpha}{2}}} t^{\frac{4-\alpha}{2}n_t} \right)-c\varepsilon\\
		&=c\log(a\sqrt{\theta}R') +c\liminf_{t \to \infty} \frac{1}{n_t}
			\log \left( \left(\frac{t}{n_t}\right)^{\frac{4-\alpha}{2}n_t}
				\frac{n_t^{\frac{4-\alpha}{2}n_t}}{(n_t!)^{\frac{4-\alpha}{2}}}\right)-c\varepsilon\\
		&=c\log\left(a\sqrt{\theta}R'\right)-c\frac{4-\alpha}{2}\log c+c\frac{4-\alpha}{2}\liminf_{t\to \infty}\frac{1}{n_t}\log \frac{n_t^{n_t}}{n_t!}-c\varepsilon\\
		&=c\log\left(a\sqrt{\theta}R'\right)-c\frac{4-\alpha}{2}\log c+c\frac{4-\alpha}{2}-c\varepsilon,
	\end{align*}
	where the last equality is due to Stirling's formula. Let $\varepsilon \to 0$. Relation \eqref{E:tAsyU} follows by maximizing over $c$. The optimal $c$ is $(a\sqrt{\theta}R')^{2/(4-\alpha)}$.
\end{proof}

\bigskip
We are now ready to give the proof of the desired lower bound:

\begin{proof}[Proof of Theorem~\ref{T:Lowbound}.]
	We consider only the case of Assumption \ref{A:Main}. The white noise case can be proved in a similar way.
	We return to Proposition~\ref{P:LB}.
	In relation~\eqref{LB-2}, we take the supremum over all $f\in\cH_+$ with $\|f\|_{\cH}=a>0$.
	Using the fact that $W_n(t,\phi)=a^n W_n(t,\phi/a)$ for any $\phi \in L^2(\mu)$ and the nonnegativity in
	\eqref{E:W=U>0}, for any $c>0$, we have that
	\begin{align*}
		\Norm{u(t,0)}_p
		& \geq \exp \left\{-\frac{1}{2} t_p^{\beta}a^2 \right\}
			\sup_{f\in \cH_+;\: \Norm{f}_\cH=a} \sum_{n\geq 0} \theta^{n/2} W_n\left(t_p^{\beta}, \cF f\right)\\
		&=\exp \left\{-\frac{1}{2} t_p^{\beta}a^2 \right\}
			\sup_{f\in \cH_+;\: \Norm{f}_\cH=1} \sum_{n\geq 0} a^n \theta^{n/2} W_n\left(t_p^{\beta}, \cF f\right)\\
		&\ge\exp \left\{-\frac{1}{2}  t_p^{\beta}a^2 \right\}
			\sup_{f\in \cH_+;\: \Norm{f}_\cH=1} a^{n_t} \theta^{n_t/2} U_{n_t}\left(t_p^{\beta},f\right)\\
&=\exp \left\{-\frac{1}{2}  t_p^{\beta}a^2 \right\} a^{n_t} \theta^{n_t/2}U_{n_t}(t_{p}^{\beta}),
	\end{align*}
	where $n_t=[c\: t_p^\beta]$.
	By choosing $c$ to be the constant in Lemma~\ref{L:AsyU}, we see that
	\begin{align*}
		\liminf_{t_p\to \infty}\frac{1}{t_p^{\beta}}\log \|u(t,0)\|_p
		\geq - \frac{1}{2}a^2 +(a\sqrt{\theta})^{\frac{2}{4-\alpha}} 2^{-\frac{\alpha}{2(4-\alpha)}}\cM^{1/2}=:h(a).
	\end{align*}
	We now maximize over $a>0$.
	More precisely, let $b=\theta^{\frac{1}{4-\alpha}}2^{-\frac{\alpha}{2(4-\alpha)}}\cM^{1/2}$.
	The maximum of the function $h(a)=-\frac{1}{2} a^2+b a^{\frac{2}{4-\alpha}},a>0$ is attained
	at the point $a^*=\left(\frac{2b}{4-\alpha}\right)^{\frac{4-\alpha}{2(3-\alpha)}}$ and the maximum value of $h$ is:
	\begin{align*}
		h(a^*)=(3-\alpha)(4-\alpha)^{-\frac{4-\alpha}{3-\alpha}}
		2^{\frac{1}{3-\alpha}}
		b^{\frac{4-\alpha}{3-\alpha}}.
	\end{align*}
	Plugging the value $b$ proves the theorem.
\end{proof}

\appendix
\section{Exponential behaviour of power series}

In this part we examine the asymptotic behaviour of some power series, measured on the exponential scale.
The first result is a useful tool for comparing two power series.
Its proof follows using the same arguments as those in the proof of Theorem 1.3 of Balan and Song \cite{balan-song19}.

\begin{lemma}
	\label{a-b-lemma}
	If $(a_n)_{n \geq 0}$ and $(b_n)_{n\geq 0}$ are two sequences of positive real numbers such that
	\begin{align*}
		\liminf_{n \to \infty} \frac{1}{n} \log \frac{b_n}{a_n} \geq \log \rho \quad \mbox{and} \quad
		\liminf_{t \to \infty}\frac{1}{t^p} \log \sum_{n\geq 0} a_n t^n \geq A
	\end{align*}
	for some $\rho>0,p>0$ and $A \in \bR$, then
	$\liminf_{t \to \infty}\frac{1}{t^p} \log \sum_{n\geq 0} b_n t^n \geq \rho^p A$.
	The same statement remains valid if we replace $(\liminf,\geq)$ by $(\limsup,\leq)$ or $(\lim,=)$.
\end{lemma}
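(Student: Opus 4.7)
The plan is to transfer the exponential asymptotic from $\sum_n a_n t^n$ to $\sum_n b_n t^n$ by exploiting the hypothesis on the ratio $b_n/a_n$: eventually $b_n$ dominates $(\rho-\varepsilon)^n a_n$, so $\sum_n b_n t^n$ dominates $\sum_n a_n s^n$ evaluated at $s = (\rho-\varepsilon)t$, up to a polynomial correction arising from the first finitely many terms. First I would fix $\varepsilon \in (0,\rho)$ and choose $N = N(\varepsilon)$ so large that $b_n \geq (\rho-\varepsilon)^n a_n$ for every $n \geq N$. Setting $s = (\rho-\varepsilon)t$, this gives
\begin{align*}
\sum_{n\geq 0} b_n t^n \;\geq\; \sum_{n \geq N} a_n s^n \;=\; \sum_{n \geq 0} a_n s^n - P_N(s),
\end{align*}
with $P_N(s) := \sum_{n=0}^{N-1} a_n s^n$ a polynomial in $s$ of degree $< N$.

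Next I would handle the principal case $A > 0$. The hypothesis $\liminf_{s\to\infty} s^{-p}\log \sum_n a_n s^n \geq A > 0$ forces $\sum_n a_n s^n$ to grow faster than any polynomial, so $P_N(s)/\sum_n a_n s^n \to 0$ and therefore $\log(\sum_n a_n s^n - P_N(s)) = \log \sum_n a_n s^n + o(1)$ as $t\to\infty$. Dividing by $t^p = (\rho-\varepsilon)^{-p} s^p$ and taking $\liminf$ yields
\begin{align*}
\liminf_{t \to \infty} \frac{1}{t^p} \log \sum_{n\geq 0} b_n t^n \;\geq\; (\rho-\varepsilon)^p \liminf_{s \to \infty} \frac{1}{s^p} \log \sum_{n\geq 0} a_n s^n \;\geq\; (\rho-\varepsilon)^p A,
\end{align*}
and letting $\varepsilon \to 0$ gives the desired bound $\rho^p A$. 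The case $A \leq 0$ is trivial: since $b_0 > 0$, one has $\sum_n b_n t^n \geq b_0$, so $\liminf_{t\to\infty} t^{-p}\log \sum_n b_n t^n \geq 0 \geq \rho^p A$.

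Finally, the limsup case follows by a symmetric argument: the limsup hypothesis produces $b_n \leq (\rho+\varepsilon)^n a_n$ for $n \geq N$, hence $\sum_n b_n t^n \leq Q_N(t) + \sum_n a_n s^n$ with $s = (\rho+\varepsilon)t$ and $Q_N$ a polynomial in $t$; combining $\log(x+y) \leq \log 2 + \max(\log x, \log y)$ with $t^{-p}\log Q_N(t) \to 0$ delivers the mirror conclusion (noting that positivity of $a_0$ implicitly forces $A \geq 0$ in this case). The equality case follows by applying both halves. The main obstacle throughout is the clean removal of the polynomial initial segment on the exponential scale: this requires the super-polynomial growth of $\sum_n a_n s^n$ when $A > 0$ in the liminf direction, and the maximum-versus-sum trick in the limsup direction, with positivity covering the remaining degenerate regime.
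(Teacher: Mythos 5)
Your proof is correct. The paper does not spell out an argument for this lemma --- it simply defers to the proof of Theorem 1.3 in Balan and Song \cite{balan-song19} --- and your tail comparison $b_n\geq(\rho-\varepsilon)^n a_n$ (resp.\ $\leq(\rho+\varepsilon)^n a_n$) for $n\geq N$, combined with the substitution $s=(\rho\mp\varepsilon)t$, the removal of the polynomial head via super-polynomial growth (resp.\ the $\log(x+y)\leq\log 2+\max(\log x,\log y)$ trick), and the sign observations forced by positivity of $a_0,b_0$, is exactly the standard argument that reference uses.
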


The next lemma is an application of the previous result.
It says that, if $R_n^{1/n} \approx R$ as $n \to \infty$,
then we can replace $R_n$ by $R^n$ when deriving the exponential asymptotic behaviour of the power series $\sum_{n\geq 0} x_n R_n t^n$.

\begin{lemma}
	\label{Rn-compare}
	If $(R_n)_{n\geq 0}$ and $(x_n)_{n\geq 0}$ are two sequences of positive real numbers such that
	\begin{align*}
		\liminf_{n \to \infty} \frac{1}{n} \log R_n \geq \log R \quad \mbox{and} \quad
		\liminf_{t \to \infty}\frac{1}{t^p} \log \sum_{n\geq 0} x_n R^n t^n \geq A
	\end{align*}
	for some $R>0,p>0$ and $A \in \bR$, then
	$\liminf_{t \to \infty} t^{-p} \log \sum_{n\geq 0} x_n R_n t^n \geq A$.
	The same statement remains valid if we replace $(\liminf,\geq)$ by
	$(\limsup,\leq)$ or $(\lim,=)$.
\end{lemma}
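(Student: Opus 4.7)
The plan is to derive this lemma as a direct corollary of the preceding Lemma \ref{a-b-lemma} by absorbing the geometric factor $R^n$ into the coefficient sequence. Specifically, I would set $a_n := x_n R^n$ and $b_n := x_n R_n$, both of which are sequences of positive reals given the hypotheses.

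With these choices, the ratio of the two sequences is $b_n/a_n = R_n/R^n$, so
\[
\frac{1}{n} \log \frac{b_n}{a_n} = \frac{1}{n}\log R_n - \log R.
\]
Taking $\liminf$ as $n \to \infty$ and invoking the first hypothesis of the present lemma yields
\[
\liminf_{n\to\infty} \frac{1}{n}\log\frac{b_n}{a_n} \geq \log R - \log R = 0 = \log 1,
\]
which is exactly the comparison hypothesis of Lemma \ref{a-b-lemma} with $\rho := 1$. The second hypothesis of Lemma \ref{a-b-lemma}, namely $\liminf_{t\to\infty} t^{-p} \log \sum_{n\geq 0} a_n t^n \geq A$, coincides verbatim with the second hypothesis of the present lemma, since $a_n t^n = x_n R^n t^n$.

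Applying Lemma \ref{a-b-lemma} then gives
\[
\liminf_{t\to\infty} \frac{1}{t^p} \log \sum_{n\geq 0} b_n t^n \geq \rho^p A = 1^p \cdot A = A,
\]
which is the stated conclusion. The $(\limsup, \leq)$ and $(\lim,=)$ variants follow by the same substitution, invoking the corresponding clauses of Lemma \ref{a-b-lemma}. I do not anticipate any real obstacle here: the whole content is packaged in the already-established comparison lemma, and the key observation is simply that the factor $\rho^p$ collapses to $1$ under the choice $\rho = 1$, so no spurious constant appears in the bound on the right-hand side.
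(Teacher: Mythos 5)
Your proposal is correct and matches the paper's own proof exactly: the paper likewise applies Lemma \ref{a-b-lemma} with $a_n = x_n R^n$ and $b_n = x_n R_n$, noting that $\liminf_{n\to\infty}\frac{1}{n}\log\frac{R_n}{R^n} = \liminf_{n\to\infty}\frac{1}{n}\log R_n - \log R \geq 0$, so that $\rho = 1$ and the factor $\rho^p$ disappears.
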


\begin{proof}
	The lemma is proved by an application of Lemma \ref{a-b-lemma} with $a_n=x_n R^n$ and $b_n=x_n R_n$,
	and the fact that
	$\liminf_{n \to \infty} \frac{1}{n} \log \frac{R_n}{R^n}=\liminf_{n \to \infty}\frac{1}{n}\log R_n-\log R \geq 0$.
\end{proof}

The next lemma is about the asymptotic property of the {\it Mittag-Leffler} function \cite{Pod99Frac}.

\begin{lemma}
	\label{L:Gamma}
	For any $\gamma >0$,
	$\lim_{t \to \infty}t^{-1/\gamma} \log  \sum_{n\geq 0} (n!)^{-\gamma} t^n=\gamma$.
\end{lemma}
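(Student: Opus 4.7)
The plan is a direct saddle-point (Laplace-type) estimate applied to $A(t):=\sum_{n\ge 0}(n!)^{-\gamma}t^n$ with general term $a_n(t):=t^n/(n!)^\gamma$. Stirling's formula $\log n!=n\log n - n + \tfrac12\log(2\pi n)+O(1/n)$ gives
\[
\log a_n(t)=n\log t-\gamma n\log n+\gamma n-\tfrac{\gamma}{2}\log(2\pi n)+O(1/n).
\]
Maximizing the leading continuous proxy $x\mapsto x\log t-\gamma x\log x+\gamma x$ produces the critical point $x^\ast=t^{1/\gamma}$ with value $\gamma t^{1/\gamma}$; hence one expects $\log A(t)=\gamma t^{1/\gamma}+O(\log t)$, which is what needs to be verified.

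For the upper bound I would first note that the Stirling estimate above implies $\max_{n\ge 0}\log a_n(t)\le \gamma t^{1/\gamma}+O(\log t)$. I would then split the series at $M:=\lceil 2^{1/\gamma}t^{1/\gamma}\rceil$: on $\{n\le M\}$ the number of terms is $O(t^{1/\gamma})$, so the partial sum is at most $(M+1)\cdot\exp(\gamma t^{1/\gamma}+O(\log t))$, while on $\{n>M\}$ the ratio $a_n/a_{n-1}=t/n^{\gamma}$ is bounded by $1/2$, making the tail geometric and at most $2a_M$. Taking logarithms and dividing by $t^{1/\gamma}$ yields $\limsup_{t\to\infty}t^{-1/\gamma}\log A(t)\le\gamma$. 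As a sanity check in the special case $\gamma\ge 1$, the same bound follows immediately from the power-mean inequality $(\sum x_n)^{1/\gamma}\le\sum x_n^{1/\gamma}$ (valid for $1/\gamma\in(0,1]$ and $x_n\ge 0$) applied to $x_n=a_n(t)$: since $\sum a_n(t)^{1/\gamma}=\sum t^{n/\gamma}/n!=\exp(t^{1/\gamma})$, one obtains $A(t)\le\exp(\gamma t^{1/\gamma})$ directly.

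For the matching lower bound I would simply retain a single term, $A(t)\ge a_{n^\ast}(t)$ with $n^\ast:=\lfloor t^{1/\gamma}\rfloor$. Applying Stirling to this particular value of $n^\ast$ gives $\log a_{n^\ast}(t)=\gamma t^{1/\gamma}+O(\log t)$, whence $\liminf_{t\to\infty}t^{-1/\gamma}\log A(t)\ge\gamma$. Combining the two bounds produces the claim. The main bookkeeping obstacle is to check that every error term arising from Stirling, from the splitting at $M$, and from rounding $t^{1/\gamma}$ to an integer is genuinely $O(\log t)$ and therefore vanishes after division by $t^{1/\gamma}$; the geometric-tail argument above is precisely what lets one avoid any finer Gaussian-width analysis near the saddle point.
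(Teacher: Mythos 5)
Your argument is correct, and it takes a genuinely different route from the paper. The paper proves Lemma \ref{L:Gamma} by first passing to the series $\sum_{n\ge 0} t^{\gamma n}/\Gamma(\gamma n+1)$, invoking the classical asymptotics of the Mittag-Leffler function $E_\gamma(z)\sim \gamma^{-1}\e^{z^{1/\gamma}}$ from Podlubny's book, and then transferring the result back to $(n!)^{-\gamma}$ via the comparison Lemma \ref{a-b-lemma} together with the Stirling limit \eqref{E:StirlingRate}. You instead run a self-contained Laplace/saddle-point estimate directly on $\sum_n t^n/(n!)^{\gamma}$: the one-term lower bound at $n^\ast=\lfloor t^{1/\gamma}\rfloor$ and the head-plus-geometric-tail upper bound split at $M=\lceil 2^{1/\gamma}t^{1/\gamma}\rceil$ both check out, since $\log n!\ge n\log n-n$ gives $\max_n a_n(t)\le \e^{\gamma t^{1/\gamma}}$ cleanly and the ratio test $a_n/a_{n-1}=t/n^{\gamma}\le 1/2$ for $n>M$ controls the tail; all error terms are indeed $O(\log t)$ and hence negligible after dividing by $t^{1/\gamma}$. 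Your approach buys elementarity and independence from external references (no Mittag-Leffler theory needed), at the cost of some bookkeeping; the paper's approach is shorter on the page because it reuses the transfer machinery (Lemma \ref{a-b-lemma} and \eqref{E:StirlingRate}) that is already required elsewhere in the argument, and delegates the hard analysis to a cited classical result. Your power-mean observation for $\gamma\ge 1$ is a nice bonus that gives the sharp upper bound $A(t)\le \e^{\gamma t^{1/\gamma}}$ in one line, though it does not cover $\gamma\in(0,1)$, which is why the splitting argument is still needed in general.
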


\begin{proof}
	We start by looking at the similar result with $(n!)^{\gamma}$ replaced by $\Gamma(\gamma n +1)$:
	\begin{align*}
		\lim_{t \to \infty}\frac{1}{t^{1/\gamma}}\sum_{n\geq 0} \frac{t^n}{\Gamma(\gamma n+1)}
		=\lim_{t \to \infty}\frac{1}{t} \sum_{n\geq 0} \frac{t^{\gamma n}}{\Gamma(\gamma n+1)}
		=1,
	\end{align*}
	where the last equality is due to the asymptotic property of the Mittag-Leffler function;
	see Theorem 1.3 on p. 32 and Theorem 1.7 on p. 35 of \cite{Pod99Frac}.
	This lemma is proved by an application of \eqref{E:StirlingRate} and Lemma \ref{a-b-lemma}
	with $a_n=\frac{1}{\Gamma(\gamma n+1)}$, $b_n=\frac{1}{(n!)^{\gamma}}$ and $p=1/\gamma$.
\end{proof}

\section{Moment comparison using hypercontractivity}

This section gives the moment comparison result of Khoa L\^e \cite{le16}.
This result was stated in \cite{le16} for the Parabolic Anderson Model with a special Gaussian noise,
but L\^e's proof is in fact valid in a much more general case. We present this proof here,
including some of the details which are missing from \cite{le16}.

Let $W=\{W(\varphi);\varphi \in \cH\}$ be an isonormal Gaussian process,
associated to a Hilbert space $\cH$. Assume that either one of the following conditions hold:\\
(i) $\cH$ consists  of functions (or distributions) on $\bR_{+} \times \bR^d$,
i.e. $W$ is {\em time-dependent}; and\\
(ii)$\cH$ consists of functions (or distributions) on $\bR^d$, i.e. $W$ is {\em time-independent}.\\

Let $\cal L$ be a second-order pseudo-differential operator of $\bR_{+} \times \bR^d$ and $u_{\theta}$ be the solution of the SPDE:
\begin{equation}
	\label{E:Leq}
	{\cal L}u(t,x)=\sqrt{\theta}u(t,x) \dot{W}, \quad t>0,x\in \bR^d
\end{equation}
with (deterministic) initial condition.
By definition, the {\em (mild Skorohod) solution} to \eqref{E:Leq} is an adapted square-integrable process $u_{\theta}=\{u_{\theta}(t,x);t>0,x\in \bR^d\}$ which satisfies
\begin{align*}
	u_{\theta}(t,x)=J_0(t,x)+\sqrt{\theta}\int_0^t \int_{\bR^d}G(t-s,x-y)u_{\theta}(s,y)W(\delta s,\delta y),
\end{align*}
if the noise $W$ is time-dependent, respectively
\begin{align*}
	u_{\theta}(t,x)=J_0(t,x)+\sqrt{\theta}\int_0^t \int_{\bR^d}G(t-s,x-y)u_{\theta}(s,y)W(\delta y)ds,
\end{align*}
if the noise $W$ is time-independent, provided that these integrals are well-defined.
Here $W(\delta s,\delta y)$ (respectively $W(\delta y)$) denotes the Skorohod integral with respect to $W$,
$G$ is the fundamental solution of $\cal L$ on $\bR_{+} \times \bR^d$,
and $J_0$ is the solution of the deterministic equation ${\cal L}u=0$ on
$\bR_{+} \times \bR^d$, with the same initial condition as \eqref{E:Leq}.

\begin{theorem}
	\label{T:Khoa}
	If for any $\theta>0$, equation \eqref{E:Leq} has a unique solution $u_{\theta}=\{u_{\theta}(t,x);t>0,x\in \bR^d\}$
	and $\bE\left(|u_{\theta}(t,x)|^p\right)<\infty$ for any $t>0$, $x \in \bR^d$ and $p>1$, then
	\begin{equation}
		\label{mom-comp}
		\Norm{u_{\frac{p-1}{q-1}\theta}(t,x)}_{q}\leq \|u_{\theta}(t,x)\|_{p} \quad \mbox{for any} \ 1<p \leq q.
	\end{equation}
	In particular,
	\begin{align*}
		\|u_{\theta}(t,x)\|_{p}\leq \|u_{(p-1)\theta}(t,x)\|_2 \quad \mbox{for any} \ p\geq 2.
	\end{align*}
\end{theorem}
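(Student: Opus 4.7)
The plan is to combine the Wiener chaos expansion of the solution $u_\theta$ with Nelson's hypercontractivity of the Ornstein--Uhlenbeck semigroup on the Wiener space of $W$. The key observation is that the parameter $\theta$ enters the chaos expansion of $u_\theta$ only through the prefactor $\theta^{n/2}$ on the $n$-th chaos component, which is precisely how the OU semigroup rescales chaos.

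First I would expand
\begin{equation*}
u_\theta(t,x) \;=\; \sum_{n\geq 0}\theta^{n/2}\, I_n\bigl(f_n(\cdot;t,x)\bigr),
\end{equation*}
where the deterministic kernels $f_n(\cdot;t,x)\in\cH^{\otimes n}$ arise from iterating the Skorohod mild formulation and do not depend on $\theta$ (cf.\ \eqref{E:Series}; the same expansion is available in both the time-dependent and time-independent settings). The hypothesis $\Norm{u_\theta(t,x)}_p<\infty$ for every $p>1$ guarantees convergence of the series in each $L^p(\Omega)$.

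Next, let $(T_r)_{r\in[0,1]}$ denote the Ornstein--Uhlenbeck semigroup on $L^2(\Omega,\sigma(W),P)$, characterised by $T_r\, I_n(g)= r^n I_n(g)$ for $g\in\cH^{\otimes n}$. Since $T_r$ is a bounded operator on $L^2(\Omega)$, it may be applied termwise, which yields
\begin{equation*}
T_r\, u_\theta(t,x) \;=\; \sum_{n\geq 0} r^n \theta^{n/2}\, I_n\bigl(f_n(\cdot;t,x)\bigr) \;=\; \sum_{n\geq 0} (r^2\theta)^{n/2}\, I_n\bigl(f_n(\cdot;t,x)\bigr) \;=\; u_{r^2\theta}(t,x),
\end{equation*}
where the last identity uses uniqueness of the $L^2(\Omega)$ solution to \eqref{E:Leq} with parameter $r^2\theta$ (whose chaos kernels are the same $f_n$'s).

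Finally, given $1<p\leq q$, I set $r=\sqrt{(p-1)/(q-1)}\in(0,1]$. Nelson's hypercontractivity (see, e.g., Theorem 1.4.1 in Nualart \cite{nualart06}) gives $\Norm{T_r F}_q \leq \Norm{F}_p$ for every $F\in L^p(\Omega,\sigma(W),P)$. Applying this with $F=u_\theta(t,x)$ and substituting the identity from the preceding display produces \eqref{mom-comp}. The second assertion is the specialisation $p'=2$, $q'=p$, after the reparametrisation $\theta\mapsto (p-1)\theta$, which converts $\frac{p'-1}{q'-1}\theta'$ into $\theta$. The only step that deserves caution is the termwise action of $T_r$ on the chaos series, which is immediate from $L^2$-continuity together with the diagonal action of $T_r$ on each chaos; no further structure of ${\cal L}$, $G$, or $W$ enters, which is why the result applies uniformly to the time-dependent and time-independent regimes.
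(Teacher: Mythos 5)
Your proof is correct, and it arrives at the same pivotal identity as the paper --- that the Ornstein--Uhlenbeck semigroup maps $u_{\theta}(t,x)$ to $u_{r^2\theta}(t,x)$ --- but by a genuinely different route. The paper never touches the chaos expansion: it realises $T_{\tau}$ through Mehler's formula with an independent copy $W'$ of the noise, checks that $\E'[\psi_{\theta,t,x}(Z_{\tau})]$ satisfies the mild equation with coefficient $\e^{-2\tau}\theta$ (the $W'$-integral having zero mean and $\E'$ commuting with the $W$-integral), and then invokes the uniqueness hypothesis to conclude $T_{\tau}u_{\theta}(t,x)=u_{\e^{-2\tau}\theta}(t,x)$. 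You instead read the identity off the spectral action $T_r I_n = r^n I_n$ on the Wiener chaos decomposition $u_{\theta}=\sum_{n}\theta^{n/2}I_n(f_n)$. Your route is shorter and makes the mechanism transparent ($\theta^{n/2}$ and $r^n$ recombine into $(r^2\theta)^{n/2}$), but it quietly uses one extra structural input: that the unique solution is given by the chaos series with $\theta$-independent kernels $f_n$. This does follow from projecting the mild equation onto each chaos (the resulting recursion forces the $n$-th kernel to equal $\theta^{n/2}\widetilde{f}_n$), and it is exactly the content of Theorem \ref{T:ExUn} in the concrete wave-equation setting, so the step is legitimate; still, it deserves an explicit sentence, since the hypotheses of the theorem only posit existence, uniqueness and finiteness of moments, and the paper's Mehler-formula argument is designed to use nothing beyond the mild equation and uniqueness. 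Both proofs then finish identically with Nelson's hypercontractivity at the critical exponent $r=\sqrt{(p-1)/(q-1)}$, and your derivation of the second assertion (the specialisation $p'=2$, $q'=p$ after replacing $\theta$ by $(p-1)\theta$) is correct.
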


\begin{proof}
  The proof is based on \textit{Mehler's formula};
  see for instance, (1.67) of \cite{nualart06} or (4.11) of \cite{Janson97}.
	First, we need to introduce the framework for this result, as presented in Section 1.4 of Nualart \cite{nualart06}.
	Suppose that $W$ is defined on a complete probability space $(\Omega,\cF,\bP)$, where $\cF$ is generated by $W$.
	Let $W'=\{W'(\varphi);\varphi \in \cH\}$ be a copy of $W$, defined on $(\Omega',\cF',\bP')$,
	where $\cF'$ is generated by $W'$.
	Using the projection maps, we redefine $W$ and $W'$ on $(\Omega \times \Omega', \cF \times \cF',\bP \times \bP')$,
	so that $W$ and $W'$ are independent.

	For any $\tau>0$ and $\varphi \in \cH$, let
	\begin{align*}
		Z_{\tau}(\varphi)= \e^{-\tau}W(\varphi)+\sqrt{1-\e^{-2\tau}}W'(\varphi).
	\end{align*}
	Then $Z_{\tau}=\{Z_{\tau}(\varphi);\varphi \in \cH\}$ is also an isonormal Gaussian process, defined on $\Omega \times \Omega'$.

	For any $\cF$-measurable function $F:\Omega \to \bR$, there exists a measurable map $\psi_F:\bR^{\cH} \to \bR$ such that $F=\psi_F(W)$.
  Let $(T_{\tau})_{\tau \geq 0}$ be the Ornstein-Uhlenbeck semigroup on $L^2(\Omega)$.
  By Mehler's formula, for any $\tau>0$ and $F \in L^2(\Omega)$
	\begin{align*}
		T_{\tau}(F)=\bE'\left[\psi_{F}\left(Z_{\tau}\right)\right] \quad \mbox{$\bP$-a.s.},
	\end{align*}
	where $\bE'$ denotes the expectation with respect to $\bP'$.
	We apply this formula to $F=u_{\theta}(t,x)$. We denote $\psi_{u_{\theta}(t,x)}=\psi_{\theta,t,x}$.
	Then $u_{\theta}(t,x)=\psi_{\theta,t,x}(W)$ and
	\begin{equation}
		\label{Mehler}
		T_{\tau}(u_{\theta}(t,x))=\bE'\left[\psi_{\theta,t,x}\left(Z_{\tau}\right)\right]  \quad \mbox{$\bP$-a.s.}
	\end{equation}

	Since $\left\{\psi_{\theta,t,x}(W);t\geq 0,x \in \bR^d\right\}$ is a solution to \eqref{E:Leq},
	$\left\{\psi_{\theta,t,x}(Z_{\tau});t\geq 0,x \in \bR^d\right\}$ is a solution of
	\begin{equation}
		\label{E:Leq2}
		{\cal L}u(t,x)=\sqrt{\theta} u(t,x) \dot{Z}_{\tau}, \quad t>0,\: x\in \bR^d,
	\end{equation}
	with the same initial condition as \eqref{E:Leq}.
	Denote $u_{\tau,\theta}(t,x):=\psi_{\theta,t,x}(Z_{\tau})$. Then  \eqref{Mehler} becomes:
	\begin{equation}
		\label{Mehler2}
		T_{\tau}(u_{\theta}(t,x))=\bE'[u_{\tau,\theta}(t,x)]  \quad \mbox{$\bP$-a.s.}
	\end{equation}

	Assume that the noise $W$ is time-dependent. (The case of the time-independent noise is similar.)
	Since $\left\{u_{\tau,\theta}(t,x);t\geq 0,x\in \bR^d\right\}$ is a solution of \eqref{E:Leq2} with the same initial
	condition as \eqref{E:Leq}, we see that
	\begin{align*}
		u_{\tau,\theta}(t,x)
		& =J_0(t,x)+\sqrt{\theta} \int_0^t \int_{\bR^d} G(t-s,x-y) u_{\tau,\theta}(s,y) Z_{\tau}(\delta s,\delta y) \\
		& =J_0(t,x)+\sqrt{\e^{-2\tau} \theta}\int_0^t \int_{\bR^d} G(t-s,x-y) u_{\tau,\theta}(s,y) W(\delta s,\delta y)\\
		& \quad+\sqrt{\theta(1-\e^{-2\tau})} \int_0^t \int_{\bR^d} G(t-s,x-y) u_{\tau,\theta}(s,y) W'(\delta s,\delta y).
	\end{align*}
	We take expectation with respect to $\bP'$.
	The third term on the right-hand side above disappears since the Skorohod integral has zero mean.
	Since $W$ and $W'$ are independent, the expectation with respect to $\bP'$ commutes with the Skorohod integral with respect to
	$W$ and hence
	\begin{align*}
		\bE'[u_{\tau,\theta}(t,x)]
		=J_0(t,x)+\sqrt{\e^{-2\tau} \theta}\int_0^t \int_{\bR^d} G(t-s,x-y) \bE'[u_{\tau,\theta}(s,y)] W(\delta s,\delta y).
	\end{align*}
  This proves that the process $\left\{ \bE'\left[u_{\tau,\theta}\left(t,x\right)\right];\: t\geq 0,x\in \bR^d\: \right\}$ is a solution of
	\begin{align*}
		\cL u=\sqrt{\e^{-2\tau}\theta}  u \dot{W}, \quad t>0,x\in \bR^d
	\end{align*}
	with the same initial condition as \eqref{E:Leq}.
	Since this equation has the {\em unique} solution $u_{\e^{-2\tau} \theta}$, we conclude that for any $t>0$ and $x \in \bR^d$,
	\begin{align*}
		\bE'[u_{\tau,\theta}(t,x)]=u_{\e^{-2\tau} \theta}(t,x) \quad \mbox{$\bP$-a.s.}
	\end{align*}
	Combining this with \eqref{Mehler2}, we obtain that for any $\tau>0$, $t>0$ and $x \in \bR^d$,
	\begin{equation}
		\label{Mehler3}
		T_{\tau}(u_{\theta}(t,x))=u_{\e^{-2\tau} \theta}(t,x) \quad \mbox{$\bP$-a.s.}
	\end{equation}

	By the hypercontractivity of the Ornstein-Uhlenbeck semigroup
  (see, e.g., Theorem 1.4.1 of Nualart \cite{Janson97} or Theorem 5.1 of Janson \cite{nualart06}),
	for any $\tau>0$, $p>1$ and $F \in L^p(\Omega)$,
	\begin{align*}
		\|T_{\tau}F\|_{q(\tau)} \leq \|F\|_p
	\end{align*}
	where $q(\tau)= \e^{2\tau}(p-1)+1$.
	We apply this to $F=u_{\theta}(t,x)$.
	Using \eqref{Mehler3}, we obtain:
	\begin{align*}
		\Norm{u_{\e^{-2\tau} \theta}(t,x)}_{q(\tau)} \leq \|u_{\theta}(t,x)\|_p.
	\end{align*}
	For fixed $1<p \leq q$, choose $\tau>0$ such that $q(\tau)=q$.
	Then $e^{-2\tau}=\frac{p-1}{q-1}$ and \eqref{mom-comp} follows.
\end{proof}

\section{Proof of \eqref{BCR}}

Under Assumption A, the density function $\varphi$ of $\mu$ satisfies the scaling property \eqref{E:scaleMu}.  Using a change of variables, and letting $\eta_0=0$, we have
\begin{align}
  \nonumber       & \int_{(\bR^d)^n} \prod_{j=1}^{n} \frac{1}{1+|\xi_j+\ldots+\xi_n|^{2}} \phi(\xi_j) \varphi(\xi_j) d\xi_1 \ldots d\xi_n= \\
  \label{eta-int} & \quad \int_{(\bR^d)^n} \prod_{j=1}^{n} \frac{1}{1+|\eta_j|^{2}} \phi(\eta_j-\eta_{j-1}) \varphi(\eta_j-\eta_{j-1}) d\eta_1 \ldots d\eta_n.
\end{align}
The last integral coincides with the integral on the far right-hand-side of equation (3.3) of \cite{BCR09}, in which $f$ is replaced by $\phi$. We argue as on pages 636-637 of \cite{BCR09} and define the linear operator $T:L^2(\bR^d) \to L^2(\bR^d)$:
\[
(Tg)(\eta)=\frac{1}{\sqrt{1+|\eta|^{2}}}\int_{\bR^d} \frac{1}{\sqrt{1+|\xi|^{2}}} \phi(\xi-\eta) \varphi(\xi-\eta) g(\xi)d\xi, \quad g \in L^2(\bR^d).
\]
This operator is self-adjoint, and has the spectral representation:
\[
\langle Tg,g \rangle=\int_{\bR}\theta \mu_{g}(d\theta) \quad \mbox{for all} \quad g \in L^2(\bR^d),
\]
where $\mu_g$ is a probability measure on $\bR$.
 Since $\varphi$ and $\psi$ are non-negative definite, so is their product. Therefore, the operator $T$ is non-negative-definite. This implies that $\mu_g$ has support in $(0,\infty)$ for any $g$.
For any $g,h \in L^2(\bR^d)$,
\[
\langle h,Tg \rangle=\int_{\bR^d} \left(\int_{\bR^d} \frac{h(\eta)}{\sqrt{1+|\eta|^{2}}}
\frac{g(\xi+\eta)}{\sqrt{1+|\xi+\eta|^{2}}}d\eta\right) \phi(\xi)\varphi(\xi)d\xi.
\]
Assume that $g\in L^2(\bR^d)$ is bounded, has compact support $K$ and $\|g\|_{L^2(\bR^d)}= 1$. There is a $\delta>0$ such that $\phi, \varphi,Q \geq \delta $ on $K$, where $Q(\xi)=1/\sqrt{1+|\xi|^{2}}$.
By (3.7) and (3.9) of \cite{BCR09},
\[
\int_{(\bR^d)^n} \prod_{j=1}^{n} \frac{1}{1+|\eta_j|^{2}} \phi(\eta_j-\eta_{j-1}) \varphi(\eta_j-\eta_{j-1}) d\eta_1 \ldots d\eta_n \geq \delta^3 \|g\|_{L^{\infty}(\mathbb{R}^d)}^{-2} \left\langle g,Tg \right\rangle^{n-1}.
\]
Note that relation (3.9) of \cite{BCR09} says that $\int_{-\infty}^{\infty}\theta^{n-1}\mu_{g}(d\theta) \geq \left(\int_{-\infty}^{\infty} \theta \mu_g(d\theta)\right)^{n-1}$
(due to Jensen's inequality). For this inequality, we need that $\mu_{g}$ has support in $(0,\infty)$.

Using \eqref{eta-int}, it follows that
\[
\liminf_{n\to \infty}\frac{1}{n}\log \int_{(\bR^d)^n} \prod_{j=1}^{n} \frac{1}{1+|\xi_j+\ldots+\xi_n|^{2}} \phi(\xi_j) \varphi(\xi_j) d\xi_1 \ldots d\xi_n \geq \log \left\langle g,Tg \right\rangle.
\]
Since the set of bounded, compactly supported functions in $L^2(\bR^d)$ is dense in $L^2(\bR^d)$, the previous inequality holds for any $g \in L^2(\bR^d)$ with $\|g\|_{L^2(\bR^d)}=1$. 
We take the supremum over all functions $g \in L^2(\bR^d)$ with $\|g\|_{L^2(\bR^d)}=1$. The conclusion follows.

\bigskip

{\bf Acknowledgement.} We would like to thank two anonymous referees who read the paper very carefully and made numerous suggestions for improvement.

\end{document}